\documentclass[12pt,twoside]{amsart}
\usepackage{latexsym,amsfonts,amsmath,amssymb,soul}
\usepackage{enumerate,soul}
\usepackage[titletoc]{appendix}
\usepackage[usenames, dvipsnames]{color}
\numberwithin{equation}{section}

\setlength{\oddsidemargin}{-2mm}
\setlength{\evensidemargin}{-2mm}
\setlength{\textwidth}{170mm}
\setlength{\textheight}{220mm}
\setlength{\topmargin}{-5mm}

\newtheorem{Th}{Theorem}[section]
\newtheorem{Rem}[Th]{Remark}
\newtheorem{Ex}[Th]{Example}
\newtheorem{Lemma}[Th]{Lemma}
\newtheorem{Def}[Th]{Definition}
\newtheorem{Prop}[Th]{Proposition}
\newtheorem{Cor}[Th]{Corollary}

\catcode`\@=11

\renewcommand{\section}%
   {\setcounter{equation}{0}\@startsection {section}{1}{\z@}{-3.5ex plus -1ex
  minus -.2ex}{2.3ex plus .2ex}{\Large\bf}}

\def\supp{\mathop{\rm supp}\nolimits}
\def\Re{\mathop{\rm Re}\nolimits}
\def\Im{\mathop{\rm Im}\nolimits}
\def\Wig{\mathop{\rm Wig}\nolimits}

\def\PW{\mathop{\rm PW}\nolimits}
\def\PWG{\mathop{\rm PWG}\nolimits}

\def\union{\mathop{\cup}}
\def\ds{\displaystyle}
\def\R{\mathbb R}
\def\C{\mathbb C}
\def\N{\mathbb N}

\newcommand{\D}{\mathcal{D}}
\newcommand{\E}{\mathcal{E}}
\newcommand{\F}{\mathcal{F}}

\newcommand{\Sch}{\mathcal{S}}

\newcommand{\afrac}[2]{\genfrac{}{}{0pt}{1}{#1}{#2}}

\newcommand{\beqsn}{\arraycolsep1.5pt\begin{eqnarray*}}
\newcommand{\eeqsn}{\end{eqnarray*}\arraycolsep5pt}
\newcommand{\beqs}{\arraycolsep1.5pt\begin{eqnarray}}
\newcommand{\eeqs}{\end{eqnarray}\arraycolsep5pt}

\title{Real Paley-Wiener theorems in spaces of ultradifferentiable functions}

\author[Boiti]{Chiara Boiti}
\address{
Dipartimento di Matematica e Informatica \\Universit\`a di Ferrara\\
Via Ma\-chia\-vel\-li n.~30\\
I-44121 Ferrara\\
Italy}
\email{chiara.boiti@unife.it}

\author[Jornet]{David Jornet}
\address{
Instituto Universitario de Matem\'atica Pura y Aplicada IUMPA\\
Universitat Po\-li\-t\`ecni\-ca de Val\`encia\\
Camino de Vera, s/n\\
E-46071 Valencia\\
Spain}
\email{djornet@mat.upv.es}

\author[Oliaro]{Alessandro Oliaro}
\address{Dipartimento di Matematica\\ Universit\`a di Torino\\
 Via Carlo Alberto n. 10\\ I-10123 Torino\\ Italy}
 \email{alessandro.oliaro@unito.it}

\begin{document}

\keywords{real Paley-Wiener theorems, weighted Schwartz classes, short-time Fourier transform, Wigner transform}
\subjclass[2010]{Primary 46F12, Secondary 46F05, 42B10}

\begin{abstract}
We develop real Paley-Wiener theorems for classes $\Sch_\omega$ of ultradifferentiable functions and related $L^{p}$-spaces in the spirit of Bang and Andersen for the Schwartz class. We introduce results of this type for the so-called Gabor transform and give a full characterization in terms of Fourier and Wigner transforms for several variables of a Paley-Wiener theorem in this general setting, which is new in the literature. We also analyze this type of results when the support of the function is not compact using polynomials. Some examples are given.

\end{abstract}

\maketitle

 \newcount\minutes
 \newcount\hour
 \newcount\minutea
 \newcount\minuteb
 \minutes=\time
  \divide\minutes by 60
  \hour=\minutes
\minutes=\time
 \multiply \hour by 60
  \advance \minutes by -\hour
 \divide \hour by 60
 \minuteb=\minutes
 \divide\minuteb by 10
\minutea=\minuteb
\multiply \minuteb by 10
\advance \minutes by  -\minuteb
\minuteb=\minutes

\markboth{\sc Real Paley-Wiener theorems in spaces of ultradifferentiable
 functions }
 {\sc C.~Boiti, D.~Jornet and A.~Oliaro}

\section{Introduction}

As stated in \cite{AD}, ``A Paley-Wiener theorem is a characterization, by relating support to growth, of the image of a space of functions or distributions under a transform of Fourier type.'' This relation comes only in terms of a compact and convex set in which the support of the function or distribution is included. In fact, the growth of $\hat{f}$ on $\C^{d}$ enables to retrieve the convex hull of the support of $f$, but no more precise information can be obtained from it (see \cite{AD} and the references therein). In the last years, a new type of results called ``real Paley-Wiener type theorems'' has received much attention, which try to circumvent this theoretical obstruction for the classical Paley-Wiener theorems to ``look inside'' the convex hull of the support. The word ``real'' expresses that information about the support of $f$ comes from growth rates associated to the function $\hat{f}$ on $\R^{d}$ rather than on $\C^{d}$ as in the classical ``complex Paley-Wiener theorems''. This theory was initiated by Bang and Tuan, and here we follow the approach of Andersen and Andersen-De Jeu (see \cite{A1,A2,AD,B,Tu} and the references therein), who state results of ``real Paley-Wiener'' type in spaces of rapidly decreasing functions (the Schwartz class $\Sch(\R^{d})$) or in $L^{p}$ spaces in their most general version, using polynomials, where the support of the function (or distribution) could be non-compact or even non-convex.

Bj\"orck~\cite{Bj} introduced in 1966 {\em global} classes of ultradifferentiable functions $\Sch_{\omega}(\R^{d})$ using weights $\omega$ in the sense of Beurling to extend previous theorems of H\"ormander about interior regularity of linear partial differential operators with constant coefficients. These weight functions permit to treat in a unified way a big scale of classes of functions or (ultra)distributions and are especially suitable for manipulations on the Fourier transform side. We recall here that when the weight function is  the logarithm, i.e. $\omega(t)=\log(1+t)$, the class $\Sch_{\omega}$ is the Schwartz class $\Sch$.   In the last 60 years, the classes of ultradifferentiable functions and their duals have been intensively studied for very different purposes and have become the right setting to study many different problems in analysis in a very general way (partial differential equations, Paley-Wiener theorems, Whitney jets, Borel theorems, etc.). We mention \cite{BMT} as the reference for the modern point of view of the treatment of these classes where the authors get, under some conditions on the weight functions, to relate the growth of the functions in terms of their partial derivatives and the growth of their Fourier transforms, property that has many advantages.

As Andersen and De Jeu mention in \cite{AD}, their theorems of ``real'' type can be extended to other transforms of Fourier type, where the classical theorems cannot. In fact, also to more general spaces of functions as we will show below. Our aim is to study real Paley-Wiener theorems in the spirit of Bang, Andersen and Andersen and De Jeu~\cite{A1,A2,AD,B} in the more general $\Sch_{\omega}$-setting and related $L^{p}$-spaces. Moreover, we show that some transforms coming from the field of time-frequency analysis enter into the game, like the Gabor and Wigner transforms. We also study the case when the support of the Fourier transform is not necessarily compact or convex, extending some results in terms of polynomials in the spirit of \cite{Tu,AD}.

In Section 2 we give some preliminaries and definitions on weight functions, Fourier type transforms and the space $\Sch_{\omega}(\R^{d})$ especially when the seminorms are given in terms of $L^{p}$-norms. In Section 3 we extend \cite[Theorem 1]{B} for several variables in the $\Sch_{\omega}$-setting in different ways (see Proposition~\ref{th3A1}). Also in this section we state a general version of \cite[Theorem 1]{A1} for the ultradifferentiable setting and several variables (Theorem~\ref{th1A1}). Our main result in this section is Theorem~\ref{th4A2}, where we give a full characterization of the known ``complex Paley-Wiener theorem'' in the Beurling setting (see \cite[Proposition 3.4(2)]{BMT})  in terms of Wigner transforms; in this result, we assume that the support of the Fourier transform of the $\Sch_{\omega}$-function is inside a hypercube in $\R^{d}$. To obtain it, we need some preparation: to study the behaviour of the Gabor transform  of a function $f$ in $\Sch_{\omega}$ with respect to a window $\psi\in\Sch_{\omega}$, in a suitable weighted mixed $L^{p,q}$-space, in terms of the support of the function $f$ and the window $\psi$ (Proposition~\ref{prop1}). As a consequence, the symmetric properties of the Wigner transform give surprising results (Corollaries \ref{corWig1} and \ref{corWig2}). We finish this section with an example about Hermite functions. In Section 4 we treat the case of arbitrary support and, following the lines of \cite{AD}, we extend Theorem 2.2 and 2.5 of this paper (these are our Theorem~\ref{th22AD} and Corollary~\ref{cor-AND}). Finally, in Example~\ref{rem2} we analyze the relation of the definition of the generalized support \eqref{5} with the regularity of the corresponding polynomial.

\section{Preliminaries}
We begin with the definition of non-quasianalytic weight function in the sense of \cite{BMT} suitable for the Beurling case, i.e. we consider the logarithm as a weight function also.
\begin{Def}
\label{defomega}
A {\em non-quasianalytic weight function} is a continuous increasing function
$\omega:\ [0,+\infty)\to[0,+\infty)$ satisfying the following properties:
\begin{itemize}
\item[$(\alpha)$]
There exists $L\ge 1$ such that $\omega(2t)\leq L(\omega(t)+1),\quad \forall t\geq 0$;
\vspace*{1mm}
\item[$(\beta)$]
$\ds \int_1^{+\infty}\frac{\omega(t)}{t^2}dt<+\infty$;
\vspace*{1mm}
\item[$(\gamma)$]
there exist $a\in\R$ and $b>0$ such that
\beqsn
\omega(t)\geq a+b\log(1+t),\qquad\forall t\geq0.
\eeqsn
\item[$(\delta)$]
$\varphi(t):=\omega(e^t)$ is convex.
\end{itemize}
Then, for $\zeta\in\C^d$, we define $\omega(\zeta):=\omega(|\zeta|)$.
\end{Def}

\begin{Rem}\label{add10}
We recall some well-known properties on weight functions; the proofs can be found in the literature, we recall them here for the sake of completeness.
\begin{itemize}
\item[(i)]
Condition $(\alpha)$ implies that for every $t_1,t_2\geq 0$
\beqs
\label{add1}
\omega(t_1+t_2)\leq L(\omega(t_1)+\omega(t_2)+1);
\eeqs
indeed, since $\omega$ is increasing and positive we have
\beqsn
\omega(t_1+t_2)\leq \omega(2\max\{t_1,t_2\})\leq L( \omega(\max\{t_1,t_2\})+1)\leq L(\omega(t_1)+\omega(t_2)+1).
\eeqsn
\item[(ii)]
Since \eqref{add1} trivially implies $(\alpha)$ with $2L$ instead of $L$, we have that $(\alpha)$ is equivalent to \eqref{add1} (cf. \cite{BMT}).
\item[(iii)]
By condition $(\alpha)$ and \eqref{add1} we easily deduce that for every $k\in\mathbb{N}$ and $t\geq 0$,
\beqs
\label{add3}
\omega(k t)\leq D_k (\omega(t)+1),
\eeqs
where $D_k=L+L^2+\dots+L^{k-1}$.
\item[(iv)]
By $(\beta)$ and the fact that $\omega$ is increasing, we have that $\omega(t)=o(t)$ as $t\to +\infty$ (cf. \cite{MT}). This can be deduced by the fact that
\beqsn
\frac{\omega(t)}{t}=\int_t^{+\infty} \frac{\omega(t)}{s^2}\,ds\leq\int_t^{+\infty} \frac{\omega(s)}{s^2}\,ds.
\eeqsn
\item[(v)]
By condition $(\gamma)$ we have
\beqs
\label{add2}
e^{-\sigma\omega(t)}\in L^p(\R^d),\quad \forall \sigma\geq \frac{d+1}{bp}.
\eeqs
\end{itemize}
\end{Rem}

We denote by $\varphi^*$ the {\em Young conjugate} of $\varphi$, defined by
\beqs
\label{young}
\varphi^*(s):=\sup_{t\geq0}\{ts-\varphi(t)\}.
\eeqs
We recall that it is an increasing convex function
satisfying $\varphi^{**}=\varphi$ (see \cite{hor-cvx}). We will use throughout the next Lemma (easy to prove; see \cite{BMT}).
\begin{Lemma}
 \label{phistar}
Let $\omega:\ [0,+\infty)\to[0,+\infty)$ be a continuous increasing function
such that $\varphi(t):=\omega(e^t)$ is convex. Then the following
properties hold:
\begin{enumerate}[{(i)}]
\item
$\varphi^*(s)/s$ is increasing.
\item
$\varphi^*(t)+\varphi^*(s)\leq\varphi^*(t+s),\quad t,s\geq0$.
\item
If there exist $A\geq0$ and $B\geq1$ such that
$\omega(et)\leq A+B\omega(t)$ for all $t\geq0$, then for all
$\lambda>0$ and $j,n\in\N_0=\N\cup\{0\}$:
\beqsn
\lambda\varphi^*\left(\frac j\lambda\right)+nj\leq
\frac{\lambda}{B^n}\varphi^*\left(\frac{j}{\lambda/B^n}\right)+\lambda n
\frac AB.
\eeqsn
Note that if $\omega$ is subadditive (that means it satisfies $\omega(t_1+t_2)\leq \omega(t_1)+\omega(t_2)$ for every $t_1,t_2\geq 0$), then we can take $A=0$ and $B=3$.
\item
If there exist $A\geq0$ and $B\geq1$ such that
$\omega(et)\leq A+B\omega(t)$ for all $t\geq0$, then for all
$\rho,\lambda>0$ and $j\in\N_0$:
\beqsn
\rho^j e^{\lambda\varphi^*\left(\frac j\lambda\right)}\leq
\Lambda_{\rho,\lambda}e^{\lambda'\varphi^*\left(\frac{j}{\lambda'}\right)}
\eeqsn
for all $0<\lambda'\leq\lambda/B^{[\log\rho+1]}$ and
$\Lambda_{\rho,\lambda}=e^{\lambda\frac AB[\log\rho+1]}$, where
$[x]$ denotes the integer part of $x$.
\item
For all $\lambda>0$ and $k\in\N_0$:
\beqsn
&&t^ke^{-\lambda\varphi^*\left(\frac k\lambda\right)}\leq
e^{\lambda\omega(t)},\qquad t\geq1,\\
&&t^ke^{-\lambda\varphi^*\left(\frac k\lambda\right)}\leq e^{-\lambda\varphi^*(0)}
e^{\lambda\omega(t)},\qquad 0\leq t\leq 1.
\eeqsn
\item
If there exist $a\in\R$ and $b>0$ such that
$\omega(t)\geq a+b\log(1+t)$ for all $t\geq0$, then for all $\sigma,\lambda>0$
and $t\geq1$:
\beqsn
\inf_{j\in\N_0}t^{-\sigma j}e^{\lambda\varphi^*\left(\frac{\sigma j}{\lambda}
\right)}\leq e^{-\left(\lambda-\frac\sigma b\right)\omega(t)-a\frac\sigma b}.
\eeqsn
\item
If $\omega(t)=o(t)$ as $t$ tends to infinity, for every $\ell\in\N$ there exists a constant $C_\ell>0$ such that
\beqsn
s\log s\leq s+\ell\varphi^*\left(\frac s\ell\right)+C_\ell,\qquad s>0.
\eeqsn
\item
Assume that there exist $A\geq0$ and $B\geq1$ such that
$\omega(et)\leq A+B\omega(t)$ for all $t\geq0$, and moreover  $\omega(t)=o(t)$ as $t$ tends to infinity.
Then, for all
$D,\lambda>0$ and $n\in\N_0$:
\beqsn
D^n n!\leq C_{D,\lambda}e^{\lambda\varphi^*\left(\frac n\lambda\right)},
\eeqsn
for some $C_{D,\lambda}>0$.
\item
For all $j,h,r\in\N_0$ and $\lambda>0$:
\beqsn
e^{\lambda\varphi^*\left(\frac{j}{\lambda}\right)}
e^{\lambda\varphi^*\left(\frac{r+h}{\lambda}\right)}\leq
e^{\frac\lambda2\varphi^*\left(\frac{j+h}{\lambda/2}\right)}
e^{\frac\lambda2\varphi^*\left(\frac{r}{\lambda/2}\right)}.
\eeqsn
\end{enumerate}
\end{Lemma}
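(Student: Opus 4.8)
The plan is to deduce all nine items from three ingredients: the representation of $\varphi^*$ as a supremum of affine functions, the convexity and monotonicity of $\varphi$ and $\varphi^*$ (recall $\varphi^*$ is increasing and convex and $\varphi^{**}=\varphi$), and the three structural hypotheses, which enter only where they are explicitly invoked. For (i) I would write, for $s>0$, $\varphi^*(s)/s=\sup_{t\ge0}\{t-\varphi(t)/s\}$; since $\omega\ge0$ forces $\varphi\ge0$, for each fixed $t$ the map $s\mapsto t-\varphi(t)/s$ is nondecreasing, hence so is the supremum, which is (i). Item (ii) is then immediate: for $t,s>0$, (i) gives $\varphi^*(t)\le\frac{t}{t+s}\varphi^*(t+s)$ and $\varphi^*(s)\le\frac{s}{t+s}\varphi^*(t+s)$, and adding yields $\varphi^*(t)+\varphi^*(s)\le\varphi^*(t+s)$; the cases $t=0$ or $s=0$ use $\varphi^*(0)=-\varphi(0)=-\omega(1)\le0$.

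For (iii) the point is that in the variable $t$ the hypothesis reads $\varphi(t+1)=\omega(e\cdot e^t)\le A+B\varphi(t)$. Setting $f(t)=\varphi(t+1)$ and $g(t)=A+B\varphi(t)$, order-reversal of the Young conjugate gives $g^*\le f^*$, while a direct computation yields $g^*(s)=B\varphi^*(s/B)-A$ and (because the shift restricts the domain) $f^*(s)\le\varphi^*(s)-s$; chaining these produces the one-step inequality $B\varphi^*(s/B)+s\le\varphi^*(s)+A$. Writing $G(\mu):=\mu\varphi^*(j/\mu)$, this reads $G(B\mu)\le G(\mu)-j+A\mu$; applying it at $\mu=\lambda/B^{k+1}$ for $k=0,\dots,n-1$ and telescoping gives $G(\lambda)-G(\lambda/B^n)\le-nj+A\lambda\sum_{k=1}^nB^{-k}\le-nj+\lambda nA/B$, which is exactly (iii); the subadditive remark follows from $\omega(et)\le\omega(3t)\le3\omega(t)$. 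For (iv) I would take $n=[\log\rho+1]$, which satisfies $nj\ge j\log\rho$, and feed it into (iii); passing from $\lambda/B^n$ to any $\lambda'\le\lambda/B^n$ is legitimate because, by (i), $G(\mu)=j\,\varphi^*(s)/s$ with $s=j/\mu$ is nonincreasing in $\mu$, and one recognises $\Lambda_{\rho,\lambda}=e^{\lambda nA/B}$.

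Items (v), (vii), (viii), (ix) are short. For (v), insert $u=\log t\ge0$ into $\varphi^*(k/\lambda)\ge(k/\lambda)u-\varphi(u)$ when $t\ge1$; for $0\le t\le1$ use $t^k\le1$ and $\varphi^*(k/\lambda)\ge\varphi^*(0)$. For (vii), write $\ell\varphi^*(s/\ell)=\sup_u\{us-\ell\varphi(u)\}\ge s\log s-\ell\omega(s)$ (take $u=\log s$), and note $\ell\omega(s)-s\le C_\ell$ for a finite constant since $\omega(t)=o(t)$ (the bounded contribution of $0<s<1$ is absorbed into $C_\ell$). Then (viii) follows by combining $n!\le n^n$, the bound $n\log n\le n+\ell\varphi^*(n/\ell)+C_\ell$ from (vii), and (iv) with $\rho=De$ and $\ell=\lambda B^{[\log\rho+1]}$ to absorb the geometric factor $(De)^n$. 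For (ix), put $x=j/\lambda$, $y=h/\lambda$, $z=r/\lambda$: superadditivity (ii) gives $\varphi^*(x)+\varphi^*(y+z)\le\varphi^*(x+y+z)$, convexity gives $\varphi^*(x+y+z)\le\frac12\varphi^*(2(x+y))+\frac12\varphi^*(2z)$, and multiplying by $\lambda$ and exponentiating yields the claim.

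The main obstacle is (vi), the only genuinely delicate estimate. By biconjugation, $\omega(t)=\varphi(\log t)=\sup_u\{u\log t-\varphi^*(u)\}$, so after scaling $-\lambda\omega(t)=\inf_{w\ge0}\{\lambda\varphi^*(w/\lambda)-w\log t\}$ for $t\ge1$, the infimum being attained at some $w^*$ (here $\varphi$ finite on all of $\R$ makes $\varphi^*$ superlinear). The quantity in (vi) is this same infimum restricted to the grid $w=\sigma j$, $j\in\N_0$. Choosing $j$ with $w_0:=\sigma j\le w^*<\sigma(j+1)$ and using convexity of $h(w):=\lambda\varphi^*(w/\lambda)-w\log t$, one gets $h(w_0)\le h(w^*)+(-h'_+(w_0))(w^*-w_0)$ with $-h'_+(w_0)=\log t-(\varphi^*)'_+(w_0/\lambda)\le\log t$ because $\varphi^*$ is increasing; hence the discretization loss is at most $\sigma\log t$. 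Finally $(\gamma)$ gives $\log t\le\log(1+t)\le(\omega(t)-a)/b$ for $t\ge1$, so the loss is at most $\frac{\sigma}{b}\omega(t)-\frac{a\sigma}{b}$ and $\inf_j\{\cdots\}\le-(\lambda-\frac\sigma b)\omega(t)-\frac{a\sigma}{b}$, which is (vi) after exponentiating. The care is precisely in bounding the gap between the discrete and continuous infima and in converting the logarithmic spacing loss into a multiple of $\omega$ through the lower bound $(\gamma)$.
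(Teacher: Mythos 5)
Your proof is correct. Note that the paper itself gives no proof of this lemma -- it simply labels it ``easy to prove'' and defers to [BMT] -- so there is nothing internal to compare against; your derivation via Young-conjugate duality (order reversal of conjugation for (iii), the representation $\varphi^*(s)/s=\sup_t\{t-\varphi(t)/s\}$ for (i)--(ii), and biconjugation $\varphi^{**}=\varphi$ plus a one-gridpoint convexity estimate for the discretization loss in (vi)) is the standard route and all steps check out, including the only delicate point, the bound $-h'_+(w_0)\leq\log t$ controlling the gap between the discrete and continuous infima in (vi).
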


In this paper we will consider classes of {\em ultradifferentiable functions} of Beurling type in the sense of Braun, Meise and Taylor~\cite{BMT}, which are defined, for a weight function $\omega$ and an open subset $\Omega$ of $\R^{d}$, by
\beqsn
\E_{(\omega)}(\Omega):=\Big\{f\in C^\infty(\R^d):&&\mbox{for each } K\subset\!\subset\Omega \mbox{ and for each }\ \lambda>0,\\
&&\ \ \ \ \sup_{\alpha\in\N_0^d}\sup_{x\in K}|D^\alpha f(x)|
e^{-\lambda\varphi^*\left(\frac{|\alpha|}{\lambda}\right)}<+\infty\Big\},
\eeqsn
where $D^\alpha=D_1^{\alpha_1}
\cdots D_d^{\alpha_d}$ with $D_j=-i\partial_{x_j}$. Here, we relax condition $(\gamma)$ of \cite[Definition 1.1]{BMT} in our Definition~\ref{defomega} since we consider only Beurling classes (as Bj\"orck~\cite{Bj}, but considering more general weights that are not necessarily subadditive).

Then, the space of {\em ultradifferentiable functions} of Beurling type
{\em with compact support} in $\Omega$ is denoted by $\D_{(\omega)}(\Omega),$ and its corresponding dual space by $\D'_{(\omega)}(\Omega)$, which is called the space of {\em ultradistributions} of Beurling type.

We consider also  the {\em Fourier transform} of $u
\in L^1(\R^d)$ denoted by
\beqsn
\F(u)(\xi)=\hat{u}(\xi):=\int_{\R^d}u(x)e^{-i\langle x,\xi\rangle}dx,\qquad\xi\in\R^d,
\eeqsn
with standard extensions to more general spaces of functions and distributions. The so-called {\em short-time Fourier transform}
(or {\em Gabor transform}) of $u\in {L^2}(\R^d)$, for a window function
$\psi\in {L^2}(\R^d)$, is denoted by
\beqsn
V_\psi u(z):=\int_{\R^d}u(y)\overline{\psi(y-x)}e^{-i\langle y,\xi\rangle}dy,
\qquad z=(x,\xi)\in\R^{2d}.
\eeqsn
The {\em Wigner transform} of $u,v\in L^2(\R^d)$ is denoted by
\beqsn
\Wig (u,v)(x,\xi):=\int_{\R^d}u\left(x+\frac t2\right)
\overline{v\left(x-\frac t2\right)}
e^{-i\langle\xi,t\rangle}dt, \qquad x,\xi\in\R^d.
\eeqsn
Then we write $\Wig u$ for $\Wig(u,u)$.
We refer to \cite{G} for the classical properties of the
Gabor and Wigner transforms. The setting of this work is given by the following definition.

\begin{Def}[\cite{Bj}]
\label{defSomega}
The space $\Sch_\omega(\R^d)$ is the set of all $u\in L^1(\R^d)$ such that $u,\hat{u}\in C^\infty(\R^d)$
and for each $\lambda>0$ and each $\alpha\in\N_0^d$ we have
$$\sup_{x\in\R^d}e^{\lambda\omega(x)}
|D^\alpha u(x)|<+\infty\quad\mbox{and}\quad \sup_{\xi\in\R^d}e^{\lambda\omega(\xi)}
|D^\alpha \hat{u}(\xi)|<+\infty.$$
The corresponding strong dual of ultradistributions will be denoted by $\Sch'_\omega(\R^d)$.
\end{Def}

By condition $(\gamma)$ of Definition~\ref{defomega} it is easy to deduce  that $\Sch_\omega(\R^d)\subset\Sch(\R^d)$. Hence, $\Sch_\omega(\R^d)$ can be equivalently defined as the set of all $u\in\Sch(\R^d)$ that satisfy the condition of Definition \ref{defSomega}. By Bj\"orck~\cite{Bj}, we know that the Fourier transform $\mathcal{F}:\Sch_\omega(\R^d)\to \Sch_\omega(\R^d)$ is a continuous automorphism, that can be extended in the usual way to $\Sch_\omega^\prime(\R^d)$ and, moreover, the space $\Sch_\omega(\R^d)$ is an algebra under multiplication and convolution. On the other hand, for $u,\psi\in\Sch_\omega(\R^d)$ we have $V_\psi u, \Wig u\in \Sch_\omega(\R^{2d})$. Moreover, for $u,\psi\in\Sch_\omega^\prime(\R^d)$ the Gabor and Wigner transforms are well defined and belong to $\Sch_\omega^\prime(\R^{2d})$ \cite{GZ, BJO1,BJO2}. We recall, for the reader convenience, the following result \cite{BJO1,BJO2}.
 \begin{Th}
 \label{thSomega}
 {Given $u\in\Sch(\R^d)$,  $u\in\Sch_\omega(\R^d)$
 if and only if one
of the following conditions hold:}
\begin{itemize}
\item[(a)]
\begin{itemize}
\item[i)]
$\forall\lambda>0,\alpha\in\N_0^d\ \exists C_{\alpha,\lambda}>0$ s.t.
$\ds\sup_{x\in\R^d}e^{\lambda\omega(x)}|D^\alpha u(x)|\leq C_{\alpha,\lambda}\,$,
\item[ii)]
$\forall\lambda>0,\alpha\in\N_0^d\ \exists C_{\alpha,\lambda}>0$ s.t.
$\ds\sup_{\xi\in\R^d}e^{\lambda\omega(\xi)}|D^\alpha \hat{u}(\xi)|\leq C_{\alpha,\lambda}\,$;
\end{itemize}
\item[(b)]
\begin{itemize}
\item[i)]
$\forall\lambda>0,\alpha\in\N_0^d\ \exists C_{\alpha,\lambda}>0$ s.t.
$\ds\sup_{x\in\R^d}e^{\lambda\omega(x)}|x^\alpha u(x)|\leq C_{\alpha,\lambda}\,$,
\item[ii)]
$\forall\lambda>0,\alpha\in\N_0^d\ \exists C_{\alpha,\lambda}>0$ s.t.
$\ds\sup_{\xi\in\R^d}e^{\lambda\omega(\xi)}|\xi^\alpha \hat{u}(\xi)|\leq C_{\alpha,\lambda}\,$;
\end{itemize}
\item[(c)]
\begin{itemize}
\item[i)]
$\forall\lambda>0\ \exists C_{\lambda}>0$ s.t.
$\ds\sup_{x\in\R^d}e^{\lambda\omega(x)}| u(x)|\leq C_{\lambda}\,$,
\item[ii)]
$\forall\lambda>0\ \exists C_{\lambda}>0$ s.t.
$\ds\sup_{\xi\in\R^d}e^{\lambda\omega(\xi)}|\hat{u}(\xi)|\leq C_{\lambda}\,$;
\end{itemize}
\item[(d)]
\begin{itemize}
\item[i)]
$\forall\lambda>0,\beta\in\N_0^d\ \exists C_{\beta,\lambda}>0$ s.t.
$\ds\sup_{\alpha\in\N_0^d}\sup_{x\in\R^d}|x^\beta D^\alpha u(x)|e^{-\lambda
\varphi^*\left(\frac{|\alpha|}{\lambda}\right)}\leq C_{\beta,\lambda}\,$,
\item[ii)]
$\forall\mu>0,\alpha\in\N_0^d\ \exists C_{\alpha,\mu}>0$ s.t.
$\ds\sup_{\beta\in\N_0^d}\sup_{x\in\R^d}|x^\beta D^\alpha u(x)|e^{-\mu
\varphi^*\left(\frac{|\beta|}{\mu}\right)}\leq C_{\alpha,\mu}\,$;
\end{itemize}
\item[(e)]
$\forall\mu,\lambda>0\ \exists C_{\mu,\lambda}>0$ s.t.
$\ds\sup_{\alpha,\beta\in\N_0^d}\sup_{x\in\R^d}|x^\beta D^\alpha u(x)|
e^{-\lambda\varphi^*\left(\frac{|\alpha|}{\lambda}\right)}e^{-\mu
\varphi^*\left(\frac{|\beta|}{\mu}\right)}\leq C_{\mu,\lambda}\,$;
\item[(f)]
$\forall\lambda>0\ \exists C_{\lambda}>0$ s.t.
$\ds\sup_{\alpha,\beta\in\N_0^d}\sup_{x\in\R^d}|x^\beta D^\alpha u(x)|e^{-\lambda
\varphi^*\left(\frac{|\alpha+\beta|}{\lambda}\right)}\leq C_{\lambda}\,$;
\item[(g)]
$\forall\mu,\lambda>0\ \exists C_{\mu,\lambda}>0$ s.t.
$\ds\sup_{\alpha\in\N_0^d}\sup_{x\in\R^d}|D^\alpha u(x)|
e^{-\lambda\varphi^*\left(\frac{|\alpha|}{\lambda}\right)}e^{\mu\omega(x)}
\leq C_{\mu,\lambda}\,$;
\item[(h)]
{given $\psi\in\Sch_\omega(\R^d)\setminus\{0\}$, }
$\forall\lambda>0\ \exists C_{\lambda}>0$ s.t.
$\ds\sup_{z\in\R^{2d}}|V_\psi u(z)|e^{\lambda\omega(z)}\leq C_\lambda$.
\end{itemize}
\end{Th}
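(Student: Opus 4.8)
The plan is to prove the theorem as a web of implications organized into three groups: the ``Fourier-pair'' conditions (a), (b), (c), each imposing one requirement on $u$ and one on $\hat u$; the intrinsic conditions (d), (e), (f), (g), describing $u$ alone through a simultaneous control of its decay and of the size of its derivatives; and the time-frequency condition (h). The backbone is the trivial implication (a)$\Rightarrow$(c), together with the fundamental duality, under the Fourier transform, between \emph{decay} of a function and \emph{ultradifferentiability} (controlled growth of derivatives). Throughout, the passage between the weight $e^{\lambda\omega}$ and the sequence weight $e^{-\lambda\varphi^*(|\alpha|/\lambda)}$ is governed by Lemma~\ref{phistar}, so most implications reduce to an algebraic juggling of $\varphi^*$.

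First I would settle the core equivalences (a)$\Leftrightarrow$(b)$\Leftrightarrow$(c). Since $\F$ is a continuous automorphism of $\Sch_\omega(\R^d)$ interchanging $D^\alpha$ with multiplication by $\xi^\alpha$, the two halves of each condition are Fourier-symmetric and it suffices to treat one side. The implication (c)$\Rightarrow$(b) is immediate once the polynomial factor $|x^\alpha|\le|x|^{|\alpha|}$ is absorbed into a faster decay rate: applying (c)(i) with $2\lambda$ and using Lemma~\ref{phistar}(v) in the form $|x|^{|\alpha|}e^{-\lambda\omega(x)}\le e^{\lambda\varphi^*(|\alpha|/\lambda)}$ yields (b)(i), while (b)$\Rightarrow$(c) follows by taking $\alpha=0$. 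The genuinely analytic step is the passage from decay to derivative bounds: representing $x^\beta D^\alpha u$ as the inverse Fourier transform of $\pm D^\beta(\xi^\alpha\hat u)$, one estimates it by an integral $\int|\xi|^{|\alpha|}\,|D^\beta\hat u(\xi)|\,d\xi$, inserts the decay of $\hat u$ and its derivatives from (c)(ii), splits off the factor $e^{\lambda\varphi^*(|\alpha|/\lambda)}$ via Lemma~\ref{phistar}(v), and absorbs the remaining integral using the integrability $e^{-\sigma\omega}\in L^p(\R^d)$ recorded in Remark~\ref{add10}(v). This one computation already delivers the derivative estimate (a)(i) and is the bridge to the intrinsic group.

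Next I would identify (g) as the hinge and run (g)$\Leftrightarrow$(e)$\Leftrightarrow$(f) and (e)$\Leftrightarrow$(d). The translation between the decay weight $e^{\mu\omega(x)}$ of (g) and the polynomial-times-$\varphi^*$ control of (e) rests on the identity $e^{\mu\omega(t)}\simeq\sup_{k}t^{k}e^{-\mu\varphi^*(k/\mu)}$, which is the biconjugacy $\varphi^{**}=\varphi$ read through Lemma~\ref{phistar}(v): taking the supremum over $\beta$ in (e) reconstitutes the weight of (g), and conversely. The equivalence (e)$\Leftrightarrow$(f) compares the product $e^{\lambda\varphi^*(|\alpha|/\lambda)}e^{\mu\varphi^*(|\beta|/\mu)}$ with the single weight $e^{\lambda\varphi^*(|\alpha+\beta|/\lambda)}$ and is handled by the superadditivity of Lemma~\ref{phistar}(ii) in one direction and the splitting Lemma~\ref{phistar}(ix) in the other; the uncoupling (e)$\Rightarrow$(d) is trivial, while (d)$\Rightarrow$(e) again uses the halving of Lemma~\ref{phistar}(ix) to merge the two one-sided bounds into a joint one. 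Linking the two groups, (c) yields (g) by the integral representation above, and (g) returns (c) by specializing $\alpha=0$ and invoking the decay$\leftrightarrow$smoothness duality on the Fourier side.

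Finally, the time-frequency characterization (h). The forward implication is essentially free: for $u,\psi\in\Sch_\omega(\R^d)$ it is already known that $V_\psi u\in\Sch_\omega(\R^{2d})$, so the $\alpha=0$ estimate of Definition~\ref{defSomega} for $V_\psi u$ is exactly (h). The converse, recovering (c) from the decay of $V_\psi u$, is the step I expect to be the main obstacle. Here one fixes a synthesis window $\gamma$ with $\langle\gamma,\psi\rangle\neq0$, reconstructs $u=\frac{1}{(2\pi)^d\langle\gamma,\psi\rangle}\int_{\R^{2d}}V_\psi u(x,\xi)\,e^{i\langle\cdot,\xi\rangle}\gamma(\cdot-x)\,dx\,d\xi$, and estimates $e^{\lambda\omega(y)}|u(y)|$ using (h) and the decay of $\gamma$; the delicate point is to trade $\omega(y)$ for $\omega(x,\xi)$ at the cost of controllable constants, for which one uses the subadditivity-type inequality \eqref{add1} of Remark~\ref{add10}(i) to write $\omega(y)\le L(\omega(y-x)+\omega(x)+1)$ and splits the decay between the window $\gamma$ and $V_\psi u$, the $\xi$-integrability being supplied by the decay of $V_\psi u$ in $z=(x,\xi)$ for all $\lambda$. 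The estimate for $\hat u$ is then obtained without repeating the argument, by invoking the fundamental identity $V_\psi u(x,\xi)=e^{-i\langle x,\xi\rangle}V_{\hat\psi}\hat u(\xi,-x)$, which transfers the decay to $V_{\hat\psi}\hat u$ and yields (c)(ii) by the same reconstruction. This closes the web and establishes the equivalence of all the conditions.
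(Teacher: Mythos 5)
The paper does not actually prove Theorem~\ref{thSomega}: it is quoted from \cite{BJO1,BJO2}, and the in-paper argument closest to yours is the proof of its $L^p$ analogue, Theorem~\ref{propSomega}. Measured against either, your proposal has one genuine gap, and it sits at the centre of the theorem: the passage from the pure decay conditions (b), (c) to any of the derivative conditions. You propose to bound $x^\beta D^\alpha u$ by $\int|\xi|^{|\alpha|}|D^\beta\hat u(\xi)|\,d\xi$ and then to ``insert the decay of $\hat u$ \emph{and its derivatives} from (c)(ii)''. But (c)(ii) controls only $\hat u$ itself; it says nothing about $D^\beta\hat u$. Obtaining derivative bounds for $\hat u$ from (c) is precisely what has to be proved --- this is the Gr\"ochenig--Zimmermann-type statement that decay of $u$ and of $\hat u$ alone forces membership in $\Sch_\omega$ --- so as written the step is circular. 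The route the paper takes for $(c)'$ avoids differentiating $\hat u$ altogether: fix a window $\psi\in\Sch_\omega(\R^d)\setminus\{0\}$, show $|e^{\lambda\omega(x)}V_\psi u(x,\xi)|\leq C_\lambda$ from the decay of $u$ (a convolution-type estimate using \eqref{add1} and \eqref{add2}), show $|e^{\lambda\omega(\xi)}V_\psi u(x,\xi)|\leq C'_\lambda$ from the decay of $\hat u$ via the identity $V_\psi u(x,\xi)=e^{-i\langle x,\xi\rangle}V_{\hat\psi}\hat u(\xi,-x)$, and take the geometric mean to land in condition (h); the derivative estimates then come out of the STFT inversion formula. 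For the same reason, in your treatment of (h) the converse should aim directly at condition (e), as in \cite[Proposition 2.10]{BJO2}, rather than back at (c), since routing through (c) re-enters the gap.

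A secondary problem: your merge of (d)(i) and (d)(ii) into (e) ``by the halving of Lemma~\ref{phistar}(ix)'' does not work as stated, because the constants $C_{\beta,\lambda}$ and $C_{\alpha,\mu}$ each depend on the multi-index over which the other bound is uniform; a geometric mean of the two estimates leaves a factor $\sqrt{C_{\beta,\lambda}\,C_{\alpha,\mu}}$ that is not uniform in $(\alpha,\beta)$. The paper instead passes to the Fourier side: condition (d)(i) for $u$ translates into (d)(ii) for $\hat u$ and vice versa, so $\hat u$ satisfies the full condition (d) and one concludes from there. The remaining blocks of your plan --- (a)$\Leftrightarrow$(b)$\Leftrightarrow$(c) modulo the gap above, (e)$\Leftrightarrow$(f) via Lemma~\ref{phistar}(ii) and (ix), (g)$\Leftrightarrow$(e) via the biconjugacy $e^{\mu\omega(t)}\simeq\sup_k t^k e^{-\mu\varphi^*(k/\mu)}$, and the forward half of (h) --- are sound and match the standard arguments.
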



In the following, it is sometimes more convenient to use $L^p$-norms instead of $L^\infty$-norms
in $\Sch_\omega(\R^d)$. We need the following notation of $L^{p,q}$-space:
\beqs
\label{40}
L^{p,q}(\R^{2d}):=\Big\{&&\!\!F \mbox{ measurable on $\R^{2d}$ such that:}\\
\notag
&&\|F\|_{L^{p,q}}:=\Big(\int_{\R^d}\Big(\int_{\R^d}|F(x,\xi)|^pdx\Big)^{q/p}d\xi\Big)^{1/q}
<+\infty\Big\}
\eeqs
if $1\leq p,q<+\infty$; here, we replace the $L^p$ or $L^q$ norm with the essential supremum if $p$ or $q$ is equal to $\infty$. We obtain the next extension of Theorem~\ref{thSomega}:
\begin{Th}
\label{propSomega}
Given a function $u\in\Sch(\R^d)$ and $1\leq p,q\leq+\infty$, we have that $u\in\Sch_\omega(\R^d)$ if and only
if one of the following conditions is satisfied:
\begin{itemize}
\item[$(a)'$]
\begin{itemize}
\item[i)]
$\forall\lambda>0,\ \alpha\in\N^d_0\ \exists C_{\alpha,\lambda}>0$ s.t.
$\ds\|e^{\lambda\omega(x)} D^\alpha u(x)\|_{L^p}\leq C_{\alpha,\lambda}\,$,
\item[ii)]
$\forall\lambda>0,\ \alpha\in\N^d_0\ \exists C_{\alpha,\lambda}>0$ s.t.
$\ds\|e^{\lambda\omega(\xi)} D^\alpha\hat{u}(\xi)\|_{L^q}\leq C_{\alpha,\lambda}\,$;
\end{itemize}
\item[$(b)'$]
\begin{itemize}
\item[i)]
$\forall\lambda>0,\ \alpha\in\N^d_0\ \exists C_{\alpha,\lambda}>0$ s.t.
$\ds\|e^{\lambda\omega(x)} x^\alpha u(x)\|_{L^p}\leq C_{\alpha,\lambda}\,$,
\item[ii)]
$\forall\lambda>0,\ \alpha\in\N^d_0\ \exists C_{\alpha,\lambda}>0$ s.t.
$\ds\|e^{\lambda\omega(\xi)} \xi^\alpha\hat{u}(\xi)\|_{L^q}\leq C_{\alpha,\lambda}\,$;
\end{itemize}
\item[$(c)'$]
\begin{itemize}
\item[i)]
$\forall\lambda>0\ \exists C_{\lambda}>0$ s.t.
$\ds\|e^{\lambda\omega(x)} u(x)\|_{L^p}\leq C_{\lambda}\,$,
\item[ii)]
$\forall\lambda>0\ \exists C_{\lambda}>0$ s.t.
$\ds\|e^{\lambda\omega(\xi)}\hat{u}(\xi)\|_{L^q}\leq C_{\lambda}\,$;
\end{itemize}
\item[$(d)'$]
\begin{itemize}
\item[i)]
$\forall\lambda>0,\beta\in\N_0^d\ \exists C_{\beta,\lambda}>0$ s.t.
$\ds\sup_{\alpha\in\N_0^d}\|x^\beta D^\alpha u(x)\|_{L^p}e^{-\lambda
\varphi^*\left(\frac{|\alpha|}{\lambda}\right)}\leq C_{\beta,\lambda}\,$,
\item[ii)]
$\forall\mu>0,\alpha\in\N_0^d\ \exists C_{\alpha,\mu}>0$ s.t.
$\ds\sup_{\beta\in\N_0^d}\|x^\beta D^\alpha u(x)\|_{L^q}e^{-\mu
\varphi^*\left(\frac{|\beta|}{\mu}\right)}\leq C_{\alpha,\mu}\,$;
\end{itemize}
\item[$(e)'$]
$\forall\mu,\lambda>0\ \exists C_{\mu,\lambda}>0$ s.t.
$\ds\sup_{\alpha,\beta\in\N_0^d}\|x^\beta D^\alpha u(x)\|_{L^p}
e^{-\lambda\varphi^*\left(\frac{|\alpha|}{\lambda}\right)}e^{-\mu
\varphi^*\left(\frac{|\beta|}{\mu}\right)}\leq C_{\mu,\lambda}\,$;
\item[$(f)'$]
$\forall\lambda>0\ \exists C_{\lambda}>0$ s.t.
$\ds\sup_{\alpha,\beta\in\N_0^d}\|x^\beta D^\alpha u(x)\|_{L^p}e^{-\lambda
\varphi^*\left(\frac{|\alpha+\beta|}{\lambda}\right)}\leq C_{\lambda}\,$;
\item[$(g)'$]
$\forall\mu,\lambda>0\ \exists C_{\mu,\lambda}>0$ s.t.
$\ds\sup_{\alpha\in\N_0^d}\|e^{\mu\omega(x)} D^\alpha u(x)\|_{L^p}
e^{-\lambda\varphi^*\left(\frac{|\alpha|}{\lambda}\right)}\leq C_{\mu,\lambda}\,$;
\item[$(h)'$]
Given $\psi\in\Sch_\omega(\R^d)\setminus\{0\}$, $\forall\lambda>0\ \exists C_{\lambda}>0$ s.t.
$\ds\|V_\psi u(z) e^{\lambda\omega(z)}\|_{L^{p,q}}\leq C_\lambda$.
\end{itemize}
\end{Th}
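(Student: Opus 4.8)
The plan is to deduce everything from the sup-norm characterization already established in Theorem~\ref{thSomega}: for a fixed $u\in\mathcal S(\mathbb R^d)$ I would show that each primed condition is equivalent to its unprimed analogue, so that the statement follows at once. When $p=\infty$ (or $q=\infty$) the $L^p$-norm (resp. $L^q$-norm) of a continuous function is its supremum and there is nothing to prove, so I may assume $1\le p,q<\infty$. The equivalence then splits into a \emph{descent} (sup-bounds $\Rightarrow$ $L^p$-bounds) and an \emph{ascent} ($L^p$-bounds $\Rightarrow$ sup-bounds).

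The descent rests on the integrability \eqref{add2}. If $g\in\mathcal S(\mathbb R^d)$ and $\sup_x e^{(\lambda+\sigma)\omega(x)}|g(x)|\le C$ with $\sigma\ge(d+1)/(bp)$, then $\|e^{\lambda\omega}g\|_{L^p}\le C\|e^{-\sigma\omega}\|_{L^p}<+\infty$; since the unprimed conditions hold for every $\lambda>0$ the extra weight is harmless, and this yields the primed versions of $(a),(b),(c),(g)$ (with $g=D^\alpha u$, $x^\alpha u$, $u$, etc., and on the $\xi$-side using $e^{-\sigma\omega}\in L^{q'}$), and of $(h)$ applied to $V_\psi u$ on $\mathbb R^{2d}$. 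For $(d),(e),(f)$, where the weight is a polynomial factor $x^\beta$, I would first turn the sup-bound into honest $\omega$-decay: optimizing $|x^\beta D^\alpha u(x)|\le C e^{\mu\varphi^*(|\beta|/\mu)}$ over $\beta$ through Lemma~\ref{phistar}(vi) produces a factor $e^{-\nu\omega(x)}$, after which \eqref{add2} applies.

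The ascent is the core. Fixing $k>d/p$, the Sobolev inequality on the unit ball $B(x_0,1)$ gives $|g(x_0)|\le C\sum_{|\gamma|\le k}\|D^\gamma g\|_{L^p(B(x_0,1))}$, while condition $(\alpha)$ in the form \eqref{add1} yields $\omega(x_0)\le L(\omega(x)+\omega(1)+1)$ on that ball, hence $e^{\lambda\omega(x_0)}\le C_\lambda e^{\lambda L\omega(x)}$ there; combining and taking the supremum over $x_0$,
\[
\sup_x e^{\lambda\omega(x)}|g(x)|\le C\sum_{|\gamma|\le k}\|e^{\lambda L\omega}D^\gamma g\|_{L^p}.
\]
Applied to $g=D^\alpha u$ (and, since $\hat u\in\mathcal S$, to $\hat u$ on the frequency side) this turns $(a)'$ into $(a)$, and likewise $(g)'\Rightarrow(g)$; the replacement of $\lambda$ by $\lambda L$ is harmless because all conditions are quantified over every $\lambda>0$. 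For $(d)',(e)',(f)'$ the Leibniz rule replaces $|\alpha|$ by $|\alpha|+k$ inside $\varphi^*$; the convexity of $\varphi^*$, giving $\lambda\varphi^*((|\alpha|+k)/\lambda)\le(\lambda/2)\varphi^*(|\alpha|/(\lambda/2))+C_{\lambda,k}$, together with the monotonicity and splitting estimates of Lemma~\ref{phistar}, absorbs the shift at the price of comparable parameters.

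The delicate cases are $(b)'$, $(c)'$ and $(h)'$, which control only the function and not its derivatives, so the Sobolev ascent cannot be applied directly. I would first note that $(b)'\Leftrightarrow(c)'$, since Lemma~\ref{phistar}(v) lets the polynomial weight $x^\alpha$ be absorbed into $e^{\lambda\omega}$ in both directions. For $(c)'$, Hölder with $e^{-\sigma\omega}\in L^{q'}$ and \eqref{add2} upgrade the $L^q$-decay of $\hat u$ to $e^{\mu\omega}\hat u\in L^1$ for all $\mu$; then $|D^\alpha u(x)|\le c\|\xi^\alpha\hat u\|_{L^1}\le c\,e^{\lambda\varphi^*(|\alpha|/\lambda)}\|e^{\lambda\omega}\hat u\|_{L^1}$ by Lemma~\ref{phistar}(v), giving Gevrey-type bounds on all derivatives of $u$, and symmetrically for $\hat u$; combining these with the $0$-th order $\omega$-decay of $u$ and $\hat u$ and propagating it to every derivative by a Landau--Kolmogorov--type convexity interpolation recovers condition $(a)$ of Theorem~\ref{thSomega}. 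Finally $(h)'$ reduces to a two-variable instance: the change-of-window relation together with Young's inequality in the weighted mixed-norm space \eqref{40} transfers the $L^{p,q}$-control of $V_\psi u$ to that of all its partial derivatives, after which the $2d$-version of the Sobolev ascent gives the sup-bound $(h)$. I expect the ascent to be the main obstacle throughout: the weight transfer across the unit ball via $(\alpha)$ and the $\varphi^*$-bookkeeping for $(d)'$--$(f)'$, and above all the recovery of decay of \emph{all} derivatives from decay of the function alone in $(b)',(c)',(h)'$, where Lemma~\ref{phistar} and the interpolation carry the real weight.
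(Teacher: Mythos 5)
Your strategy is sound and would produce a correct proof, but it is genuinely different from the paper's. You reduce every primed condition to its unprimed analogue by a local ``descent/ascent'' scheme: descent via H\"older against $e^{-\sigma\omega}\in L^{p}$ (this part coincides with what the paper does), and ascent via the Sobolev embedding $W^{k,p}(B(x_0,1))\hookrightarrow L^\infty(B(x_0,1))$ combined with the near-constancy of $e^{\lambda\omega}$ on unit balls coming from $(\alpha)$. The paper never uses Sobolev embedding or interpolation: its ascent arguments always pass to the Fourier or time--frequency side. For $(c)'$ it bounds $e^{\lambda\omega(x)}|V_\psi u(x,\xi)|$ and $e^{\lambda\omega(\xi)}|V_\psi u(x,\xi)|$ separately by H\"older (letting the window $\psi$ absorb the translation) and takes a geometric mean to land in condition $(h)$ of Theorem~\ref{thSomega}; for $(f)'$ and $(d)'$ it estimates $\xi^\beta D^\alpha\hat u=\F(D^\beta(x^\alpha u))$ directly; for $(h)'$ it quotes the STFT inversion estimate of \cite[Prop.~2.10]{BJO2} plus mixed-norm H\"older. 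Your route is more self-contained with respect to time--frequency machinery (you only need the STFT for $(h)'$ itself) but imports tools the paper does not develop: the uniform local Sobolev inequality, a weighted Young inequality for the change of window, a mixed-norm Sobolev embedding on $\R^{2d}$, and above all the interpolation step in the $(c)'$ ascent. That last step is the one place requiring real care: the additive Sobolev bound $\|u\|_{L^\infty(B)}\lesssim\|u\|_{W^{k,p}(B)}$ does \emph{not} give decay of $u$ from decay of $\|u\|_{L^p(B(x_0,1))}$ alone, because the derivative terms are merely bounded; you must use the multiplicative Gagliardo--Nirenberg form $\|u\|_{L^\infty(B)}\lesssim\|\nabla^k u\|_{L^p(B)}^{\theta}\|u\|_{L^p(B)}^{1-\theta}+\|u\|_{L^p(B)}$ and then a Landau--Kolmogorov iteration, exploiting that the decay rate $\lambda$ is arbitrary so that the loss of the factor $1-\theta$ is harmless. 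Written out, this works, but it is noticeably longer than the paper's geometric-mean trick on $V_\psi u$, which settles $(c)'$ (and hence $(a)'$, $(b)'$) in a few lines.
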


\begin{proof}
\underline{$(c)'\Leftrightarrow u\in\Sch_\omega(\R^d)$}:\\
Let us assume $u\in\Sch(\R^d)$ to satisfy $(c)'$ and prove that
$u\in\Sch_\omega(\R^d)$.
To this aim we shall prove that $u$ satisfies condition $(h)$ of
Theorem~\ref{thSomega}, for some
fixed $\psi\in\Sch_\omega(\R^d)\setminus\{0\}$.
We fix $\sigma\geq(d+1)/bp'$, where $b$ is the constant in condition $(\gamma)$
of Definition \ref{defomega} and $p'$ is the conjugate exponent of $p$. Let us first compute
\beqs
\nonumber
|e^{\lambda\omega(x)}V_\psi u(x,\xi)|=&&\left|
e^{\lambda\omega(x)}\int_{\R^d}u(y)\overline{\psi(y-x)}e^{-i\langle y,\xi\rangle}dy\right|\\
\nonumber
\leq&&e^{\lambda L}\int_{\R^d}|u(y)|e^{\lambda L\omega(y)}
|\psi(y-x)|e^{\lambda L\omega(y-x)+\sigma\omega(y-x)}e^{-\sigma\omega(y-x)}dy\\
\nonumber
\leq&&e^{\lambda L}\|e^{(\lambda L+\sigma)\omega(t)}\psi(t)\|_{L^\infty}\cdot\|e^{\lambda L\omega(y)}u(y)\|_{L^p}
\cdot\|e^{-\sigma\omega(t)}\|_{L^{p'}}\\
\label{304''}
\leq&&C_\lambda
\eeqs
since $\psi\in\Sch_\omega(\R^d)$ and because of $(c)'(i)$, \eqref{add1} and \eqref{add2}.

On the other hand (see \cite[formula (3.10)]{G}),
\beqsn
V_\psi u(x,\xi)=e^{-i\langle x,\xi\rangle}V_{\hat\psi}\hat{u}(\xi,-x),
\eeqsn
so that, as in \eqref{304''} with $q$ instead of $p$, we obtain that also
\beqs
\label{304'}
|e^{\lambda\omega(\xi)}V_\psi u(x,\xi)|=
|e^{\lambda\omega(\xi)}V_{\hat\psi} \hat{u}(\xi,-x)|\leq C'_\lambda
\eeqs
for some $C'_\lambda>0$.

Then, from \eqref{304''}, \eqref{304'} and \eqref{add1}:
\beqsn
|V_\psi u(x,\xi)|=&&\sqrt{|V_\psi u(x,\xi)|^2}
\leq\sqrt{C_\lambda e^{-\lambda\omega(x)}C'_\lambda e^{-\lambda\omega(\xi)}}\\
\leq&& C''_\lambda e^{-\frac\lambda2(\omega(x)+\omega(\xi))}
\leq C''_\lambda e^{\frac{\lambda}{2}} e^{-\frac{\lambda}{2L}\omega(x,\xi)}
\eeqsn
for some $C''_\lambda>0$, i.e. condition $(h)$ of Theorem~\ref{thSomega} is
satisfied and $u\in\Sch_\omega(\R^d)$.

Conversely, if $u\in\Sch_\omega(\R^d)$ then condition $(c)$ of Theorem~\ref{thSomega}
is satisfied and hence from \eqref{add2}
\beqsn
\|e^{\lambda\omega(x)}u(x)\|_{L^p}\leq\|e^{(\lambda+\sigma)\omega(x)}u(x)\|_{L^\infty}
\cdot\|e^{-\sigma\omega(x)}\|_{L^p}\leq C_\lambda
\eeqsn
for $\sigma\geq (d+1)/bp$, and analogously $\|e^{\lambda\omega(\xi)}\hat{u}(\xi)\|_{L^q}\leq C_\lambda$ for some
$C_\lambda>0$. \\[0.2cm]
\indent\underline{$(a)'\Leftrightarrow u\in\Sch_\omega(\R^d)$}:\\
If $u$ satisfies $(a)'$, then it satisfies $(c)'$, so from the previous point $u\in \Sch_\omega(\R^d)$. On the other hand, if $u\in \Sch_\omega(\R^d)$, from $(a)$ of Theorem \ref{thSomega} we have
\beqsn
\Vert e^{\lambda\omega(x)} D^\alpha u(x)\Vert_{L^p}\leq \Vert e^{(\lambda+\sigma)\omega(x)} D^\alpha u(x)\Vert_{L^\infty} \Vert e^{-\sigma\omega(x)}\Vert_{L^p}\leq C^\prime_{\alpha,\lambda},
\eeqsn
for $\sigma\geq (d+1)/bp$, so $(a)'(i)$ is satisfied; the proof of $(a)'(ii)$ is similar. \\[0.2cm]
\indent\underline{$(b)'\Leftrightarrow u\in\Sch_\omega(\R^d)$}:\\
It is enough to prove that $(b)'\Leftrightarrow (c)'$. Since $(b)'\Rightarrow (c)'$ is trivial, let us suppose that $u$ satisfies $(c)'$; from the condition $(\gamma)$ of Definition \ref{defomega}, for $c=1/b$ and $C_\alpha=e^{-a|\alpha|/b}$, we have
\beqsn
\vert e^{\lambda\omega(x)} x^\alpha\vert\leq e^{\lambda\omega(x)+\vert\alpha\vert\log\vert x\vert}\leq C_\alpha e^{(\lambda+c\vert\alpha\vert)\omega(x)}.
\eeqsn
Hence, we obtain
\beqsn
\Vert e^{\lambda\omega(x)} x^\alpha u(x)\Vert_{L^p}\leq C_\alpha \Vert e^{(\lambda+c\vert\alpha\vert)\omega(x)} u(x)\Vert_{L^p}\leq C_{\alpha,\lambda}
\eeqsn
for some $C_{\alpha,\lambda}>0$,
so that $(b)'(i)$ is satisfied. Analogously we get $(b)'(ii)$. \\[0.2cm]
\indent\underline{$(f)'\Leftrightarrow u\in\Sch_\omega(\R^d)$}:\\
Let $u\in\Sch(\R^d)$ which satisfies $(f)'$. It is enough to see that $\hat{u}\in\Sch_\omega(\R^d)$. For all $\xi\in\R^d$, $\alpha,\beta\in\N_0^d$:
\beqsn
|\xi^\beta D^\alpha\hat{u}(\xi)|=&&\left|\F\big(D_x^\beta(x^\alpha u(x))\big)(\xi)\right|
\leq\|D^\beta x^\alpha u\|_{L^1}\\
\leq&&\|(1+|x|^2)^{-n}\|_{L^{p'}}\cdot\|(1+|x|^2)^nD^\beta(x^\alpha u(x))\|_{L^p}\\
\leq&&C_n\sum_{\afrac{\gamma\leq\beta}{\gamma\leq\alpha}}
\binom{\beta}{\gamma}\frac{\alpha!}{(\alpha-\gamma)!}
\|(1+|x|^2)^nx^{\alpha-\gamma}D^{\beta-\gamma}u(x)\|_{L^p}
\eeqsn
for some $C_n>0$ if we choose $n>d/(2p')$. Therefore, by $(f)'$, it is easy to see (Lemma~\ref{phistar}) that
for every $\lambda>0$ there exists $C_{\lambda}>0$ such that for each $\xi\in\R^{d}$,
\beqsn
|\xi^\beta D^\alpha\hat{u}(\xi)|\leq C_{\lambda}
e^{\lambda\varphi^*\left(\frac{|\alpha+\beta|}{\lambda}\right)}.
\eeqsn

In the other direction, if $u\in\Sch_\omega(\R^d)$, we have, by Lemma \ref{phistar},
\beqsn
\|x^\beta D^\alpha u(x)\|_{L^p}\leq&& \|(1+|x|^2)^{-n}\|_{L^p}\cdot
\|(1+|x|^2)^nx^\beta D^\alpha u(x)\|_{L^\infty}\\
\leq&& C_nC^\prime_{2\lambda}e^{2\lambda\varphi^*\left(\frac{|\alpha+\beta|+2n}{2\lambda}\right)}
\leq C_n C''_{2\lambda}e^{\lambda\varphi^*\left(\frac{|\alpha+\beta|}{\lambda}\right)}
e^{\lambda\varphi^*\left(\frac{2n}{\lambda}\right)}
\leq \widetilde{C}_\lambda e^{\lambda\varphi^*\left(\frac{|\alpha+\beta|}{\lambda}\right)}
\eeqsn
for some $C_n,C^\prime_{2\lambda},C''_{2\lambda},\widetilde{C}_\lambda>0$ if we choose $n>d/(2p)$. \\[0.2cm]
\indent\underline{$(e)'\Leftrightarrow u\in\Sch_\omega(\R^d)$}: \\
From the convexity of $\varphi^*$ we get that $(e)'\Leftrightarrow(f)'$, cf. Lemma~\ref{phistar}(ii) and (ix). \\[0.2cm]
\indent\underline{$(g)'\Leftrightarrow u\in\Sch_\omega(\R^d)$}: \\
We assume $(g)'$ is satisfied and  we prove $(e)'$. By Lemma~\ref{phistar}(v), for all $\alpha,\beta\in\N_0^d$, $\lambda,\mu>0$:
\beqsn
\|x^\beta D^\alpha u(x)\|_{L^p}\leq&&
C_\mu\|e^{\mu\omega(x)}e^{\mu\varphi^*\left(\frac{|\beta|}{\mu}\right)}D^\alpha u(x)\|_{L^p}\\
\leq&&
C_\mu e^{\mu\varphi^*\left(\frac{|\beta|}{\mu}\right)}
\|e^{\mu\omega(x)}D^\alpha u(x)\|_{L^p}\\
\leq&&C_{\mu,\lambda}e^{\mu\varphi^*\left(\frac{|\beta|}{\mu}\right)}
e^{\lambda\varphi^*\left(\frac{|\alpha|}{\lambda}\right)}
\eeqsn
for some $C_\mu, C_{\mu,\lambda}>0$.

Let us now assume $u\in\Sch_\omega(\R^d)$. Then condition $(g)$ of
Theorem~\ref{thSomega} is satisfied, and hence for $\sigma\geq (d+1)/bp$ and for every $\alpha\in\N_0^d$
and $\mu>0$:
\beqsn
\|e^{\mu\omega(x)}D^\alpha u(x)\|_{L^p}=&&
\|e^{(\mu+\sigma)\omega(x)}D^\alpha u(x) e^{-\sigma\omega(x)}\|_{L^p}\\
\leq&&\|e^{(\mu+\sigma)\omega(x)}D^\alpha u(x)\|_{L^\infty}\cdot\|e^{-\sigma\omega(x)}\|_{L^{p}}\\
\leq&&C_{\mu,\lambda}e^{\lambda\varphi^*\left(\frac{|\alpha|}{\lambda}\right)}
\eeqsn
for some $C_{\mu,\lambda}>0$ by $(g)$ and \eqref{add2}. \\[0.2cm]
\indent\underline{$(d)'\Leftrightarrow u\in\Sch_\omega(\R^d)$}: \\
Let $u\in\Sch_\omega(\R^d)$; then $u$ satisfies $(e)'$
for any $p$ (or $q$) in $[1,+\infty]$. Then $(d)'$ is trivially
satisfied for any $1\leq p,q\leq+\infty$.


In the opposite direction, we have that, using $(d)'(i)$ it is not difficult to see that (Lemma~\ref{phistar})
\beqsn
\vert\xi^\beta D^\alpha \hat{u}(\xi)\vert\leq
C_{\alpha,\lambda}
e^{\lambda\varphi^*\left(\frac{\vert\beta\vert}{\lambda}\right)},\ \ \xi\in\R^{d}.
\eeqsn
So $\hat{u}$ satisfies $(d)(ii)$ of Theorem \ref{thSomega}. In the same way, the fact that $u$ satisfies $(d)'(ii)$ implies that $\hat{u}$ satisfies $(d)(i)$
of Theorem \ref{thSomega}. Then $\hat{u}\in\Sch_\omega(\R^d)$. \\[0.2cm]
\indent\underline{$(h)'\Leftrightarrow u\in\Sch_\omega(\R^d)$}: \\
If $u\in\Sch_\omega(\R^d)$ then $u$ satisfies $(h)$ of Theorem \ref{thSomega}, and so
\beqsn
\Vert V_\psi u(z) e^{\lambda\omega(z)}\Vert_{L^{p,q}}\leq \Vert V_\psi u(z) e^{(\lambda+\sigma)\omega(z)}\Vert_{L^\infty}
\Vert e^{-\sigma\omega(z)}\Vert_{L^{p,q}}\leq C_\lambda,
\eeqsn
for $\sigma$ sufficiently large, from \eqref{add2}.

In the opposite direction, we prove that $(h)'\Rightarrow (e)$ of Theorem \ref{thSomega}. From the
proof of \cite[Proposition 2.10]{BJO2}, under condition \eqref{add1} instead of subadditivity,  we have
\beqsn
e^{-\lambda\varphi^*\left(\frac{\vert\alpha\vert}{\lambda}\right)}
e^{-\mu\varphi^*\left(\frac{\vert\beta\vert}{\mu}\right)} \vert y^\beta
D^\alpha u(y)\vert\leq C_{\lambda,\mu}\int_{\R^{2d}} \vert V_\psi u(z)\vert e^{(\mu L+3L\lambda+\sigma)\omega(z)} e^{-\sigma\omega(z)}dz
\eeqsn
for every $\sigma>0$; using H\"older's inequality for $L^{p,q}$ spaces we get
\beqsn
e^{-\lambda\varphi^*\left(\frac{\vert\alpha\vert}{\lambda}\right)}
e^{-\mu\varphi^*\left(\frac{\vert\beta\vert}{\mu}\right)} \vert y^\beta
D^\alpha u(y)\vert\leq C_{\lambda,\mu} \Vert V_\psi u(z)e^{(\mu L+3L\lambda+\sigma)\omega(z)} \Vert_{L^{p,q}}
\Vert e^{-\sigma\omega(z)}\Vert_{L^{p^\prime,q^\prime}}\leq C^\prime_{\lambda,\mu},
\eeqsn
if we choose $\sigma$ sufficiently large, cf. \eqref{add2}.
\end{proof}

We observe that Theorems~\ref{thSomega} and \ref{propSomega} provide equivalent systems of seminorms for the space $\Sch_\omega(\R^d)$.

\section{Real Paley-Wiener theorems for $\omega$-ultradifferentiable
functions}
\label{sec3}

Now, we prove different ``real Paley-Wiener theorems'' in the spirit
of \cite{B,A1,AD} in spaces of $\omega$-ultradifferentiable
functions. Moreover, we  analyze the behavior of time-frequency
representations (Gabor and Wigner) of $\omega$-ultradifferentiable functions which have Fourier transform with compact support.

We shall use in the following the notation $\langle f,g\rangle$ for the inner product in $L^2$ when $f,g\in L^2$, or (more generally) for the duality, that we consider as conjugate linear application of $f$ to $g$.

Here, we consider, for $R>0$ and a non-quasianalytic weight function
$\omega$, the space
\beqs
\label{PWR}
\PW_R^\omega(\R^d):=\Big\{f\in C^\infty(\R^d):\ \forall\lambda>0,\ \
\sup_{\alpha\in\N_0^d}\sup_{x\in\R^d}R^{-|\alpha|}
e^{\lambda\omega\left(\frac{x}{|\alpha|+1}\right)}|D^\alpha f(x)|<+\infty\Big\}.
\eeqs

\begin{Lemma}
\label{lemma1}
$\PW_R^\omega(\R^d)\subseteq\Sch_\omega(\R^d)$.
\end{Lemma}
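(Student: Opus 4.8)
The plan is to show that every $f\in\PW_R^\omega(\R^d)$ belongs to $\Sch_\omega(\R^d)$ by first checking $f\in\Sch(\R^d)$ and then verifying one of the equivalent conditions of Theorem~\ref{thSomega} that is expressed \emph{only} in terms of $f$ (not of $\hat f$), since the definition \eqref{PWR} gives no direct control on the Fourier side. The convenient choice is condition (e) (equivalently (f)). Unwinding \eqref{PWR}, for every $\lambda'>0$ there is $C_{\lambda'}>0$ with
\[
|D^\alpha f(x)|\le C_{\lambda'}\,R^{|\alpha|}\,e^{-\lambda'\omega\left(\frac{x}{|\alpha|+1}\right)},\qquad x\in\R^d,\ \alpha\in\N_0^d.
\]

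First I would check $f\in\Sch(\R^d)$, so that Theorem~\ref{thSomega} applies. Fixing $\alpha$ and using $(\gamma)$ in the form $e^{-\lambda'\omega(t)}\le e^{-\lambda'a}(1+t)^{-\lambda'b}$ with $t=|x|/(|\alpha|+1)$, the displayed bound decays faster than any prescribed power of $1+|x|$ once $\lambda'$ is large; hence all Schwartz seminorms of $f$ are finite.

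For the core estimate, set $n=|\alpha|$, put $s=|x|/(n+1)$ and use $|x^\beta|\le|x|^{|\beta|}=(n+1)^{|\beta|}s^{|\beta|}$, so that
\[
|x^\beta D^\alpha f(x)|\le C_{\lambda'}\,R^{n}(n+1)^{|\beta|}\,s^{|\beta|}e^{-\lambda'\omega(s)}.
\]
The decisive point is that Lemma~\ref{phistar}(v), applied in both ranges $s\ge 1$ and $0\le s\le 1$, converts the scaled decay into a $\varphi^*$ term: $s^{|\beta|}e^{-\lambda'\omega(s)}\le C\,e^{\lambda'\varphi^*(|\beta|/\lambda')}$.

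The main obstacle is the coupling factor $(n+1)^{|\beta|}$, which entangles the two indices $\alpha$ and $\beta$. I would tame it by the Young inequality $ty\le\varphi(t)+\varphi^*(y)$ applied at $t=\log(n+1)$ (so that $\varphi(\log(n+1))=\omega(n+1)$), which gives $|\beta|\log(n+1)\le\eta\varphi^*(|\beta|/\eta)+\eta\,\omega(n+1)$ for every $\eta>0$, i.e. $(n+1)^{|\beta|}\le e^{\eta\varphi^*(|\beta|/\eta)}e^{\eta\omega(n+1)}$. The surviving pure-$\alpha$ factors $R^{n}$ and $e^{\eta\omega(n+1)}$ are then absorbed into an $e^{\lambda\varphi^*(n/\lambda)}$: since $\omega(t)=o(t)$ (Remark~\ref{add10}(iv)), the factor $e^{\eta\omega(n+1)}$ is sub-exponential, so $R^{n}e^{\eta\omega(n+1)}\le D^{n}\le D^{n}n!\le C\,e^{\lambda\varphi^*(n/\lambda)}$ by Lemma~\ref{phistar}(viii). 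Finally the two $\beta$-factors $e^{\lambda'\varphi^*(|\beta|/\lambda')}$ and $e^{\eta\varphi^*(|\beta|/\eta)}$ are merged into a single $e^{\mu\varphi^*(|\beta|/\mu)}$ by the elementary monotonicity of $\rho\mapsto\rho\varphi^*(s/\rho)$ together with the superadditivity Lemma~\ref{phistar}(ii) (and, if one prefers to conclude with condition (f), by redistributing the indices via Lemma~\ref{phistar}(ix)). Taking the Paley--Wiener parameter $\lambda'$ and the auxiliary parameter $\eta$ large enough, this yields, for each $\lambda,\mu>0$, an estimate $|x^\beta D^\alpha f(x)|\le C_{\lambda,\mu}\,e^{\lambda\varphi^*(|\alpha|/\lambda)}e^{\mu\varphi^*(|\beta|/\mu)}$, which is exactly condition (e) of Theorem~\ref{thSomega}; hence $f\in\Sch_\omega(\R^d)$.
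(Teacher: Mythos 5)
Your reduction to Theorem~\ref{thSomega} and the preliminary check that $f\in\Sch(\R^d)$ are fine, but the proof breaks at the final ``merging'' step, and the break is fatal to the whole strategy. You end up with \emph{two} exponential factors in the same index, $e^{\lambda'\varphi^*(|\beta|/\lambda')}$ (from $s^{|\beta|}e^{-\lambda'\omega(s)}$) and $e^{\eta\varphi^*(|\beta|/\eta)}$ (from $(n+1)^{|\beta|}$), and such a product cannot be dominated by a single $C\,e^{\mu\varphi^*(|\beta|/\mu)}$, no matter how $\lambda',\eta$ are chosen. Superadditivity (Lemma~\ref{phistar}(ii)) and part (ix) only redistribute or add \emph{arguments}: applied here they produce $\varphi^*$ evaluated at roughly $2|\beta|$, i.e.\ essentially the square of one factor, not one factor. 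Concretely, for $\omega(t)=t^{1/2}$ one computes $\varphi^*(y)=2y\log(2y)-2y$, so $e^{\lambda\varphi^*(j/\lambda)}\asymp c_\lambda^{\,j}(j!)^2$ for \emph{every} $\lambda>0$; the product of two such factors is $\asymp (j!)^4$ and can never be bounded by $C\,c^{\,j}(j!)^2$. Worse, the strategy cannot be repaired: since $\sup_{s\ge0}s^{k}e^{-\lambda\omega(s)}=e^{\lambda\varphi^*(k/\lambda)}$ up to a constant, the \emph{sharp} pointwise consequence of the bounds \eqref{PWR} is
\[
\sup_{x\in\R^d}|x^\beta D^\alpha f(x)|\;\leq\;C_\lambda\,R^{|\alpha|}\,(|\alpha|+1)^{|\beta|}\,e^{\lambda\varphi^*\left(\frac{|\beta|}{\lambda}\right)},
\]
and for $|\beta|\gg|\alpha|$ (e.g.\ $|\beta|=|\alpha|^2$ in the Gevrey example) the factor $(|\alpha|+1)^{|\beta|}$ exceeds any product $C\,e^{\lambda\varphi^*(|\alpha|/\lambda)}e^{\mu\varphi^*(|\beta|/\mu)}$. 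So condition (e) is simply not extractable by termwise manipulation of the defining inequalities; it holds for $f\in\PW_R^\omega(\R^d)$ only because of information that these manipulations discard.

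The premise that \eqref{PWR} ``gives no direct control on the Fourier side'' is exactly where you took the wrong turn: it gives very cheap control there, and that is the route the paper takes. After the $\Sch(\R^d)$ step, the paper verifies condition (c) of Theorem~\ref{thSomega}: (c)(i) is the case $\alpha=0$ of \eqref{PWR}, and for (c)(ii) one integrates by parts, $|\hat f(\xi)|\leq|\xi|^{-2N}\sum_{|\nu|=N}\frac{N!}{\nu!}\|D^{2\nu}f\|_{L^1}$, uses \eqref{PWR} to bound the $L^1$ norms by $C_\lambda R^{2N}(2N+1)^d\|e^{-\lambda\omega}\|_{L^1}$, and then takes the infimum over $N$ via Lemma~\ref{phistar}(viii) and (vi) to get $|\hat f(\xi)|\leq C_\mu e^{-\mu\omega(\xi)}$ for every $\mu$. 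This sidesteps the coupling between $\alpha$ and $\beta$ entirely, because condition (c) involves no derivatives at all. I would rewrite your argument along those lines.
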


\begin{proof}
Let $f\in \PW_R^\omega(\R^d)$ and let us first prove that $f\in\Sch(\R^d)$.
Indeed, there exists a constant $C>0$ such that for every $\alpha,\beta\in\N_0^d$ there exists $C_{\alpha,\beta}>0$ such that
\beqsn
|x^\beta D^\alpha f(x)|\leq&& C R^{|\alpha|}|x^\beta|
e^{-\omega\left(\frac{x}{|\alpha|+1}\right)}\\
\leq&&CR^{|\alpha|}|x|^{|\beta|}e^{-\frac{1}{D_{|\alpha|}}\omega(x)+1}\\
\leq&&C_\alpha R^{|\alpha|}e^{\frac{1}{D_{|\alpha|}}\varphi^*(|\beta| D_{|\alpha|})}=:C_{\alpha,\beta}\,,
\eeqsn
by \eqref{add3} and Lemma~\ref{phistar}(v).  Now, we prove conditions $(c)(i)$ and $(c)(ii)$ of {Theorem~\ref{thSomega}}. Condition $(c)(i)$ trivially follows from the definition of $\PW_R^\omega(\R^d)$ with
 $\alpha=0$. Let us prove condition $(c)(ii)$. For $|\xi|\geq1$ and $N\in\N_0$ we have:
\beqsn
|\hat{f}(\xi)|=&&\left|\int_{\R^d}f(x)e^{-i\langle x,\xi\rangle}dx\right|\\
\leq&&\frac{1}{|\xi|^{2N}}\left|\int_{\R^d}f(x)\Delta_x^Ne^{-i\langle x,\xi\rangle}dx\right|\\
\leq&&\frac{1}{|\xi|^{2N}}\int_{\R^d}|\Delta_x^Nf(x)|dx\\
\leq&&\frac{1}{|\xi|^{2N}}\sum_{{\vert\nu\vert}=N}\frac{N!}{{\nu!}}\int_{\R^d}
|D_x^{2{\nu}}f(x)|dx,
\eeqsn
where {$\nu\in\N^d$ and $D_x^{2\nu}=D_{x_1}^{2\nu_1}\cdots D_{x_d}^{2\nu_d}$}.
Since $f\in\PW_R^\omega(\R^d)$ we thus have, for $|\xi|\geq1$ and $\lambda\geq (d+1)/b$:
\beqs
\nonumber
|\hat{f}(\xi)|\leq&&
\frac{d^N}{|\xi|^{2N}}C_\lambda R^{2N}\int_{\R^d}
e^{-\lambda\omega\left(\frac{x}{2N+1}\right)}dx\\
\nonumber
=&&\frac{d^N}{|\xi|^{2N}}C_\lambda R^{2N}(2N+1)^d\int_{\R^d}
e^{-\lambda\omega(y)}dy\\
\label{4}
=&&C'_\lambda\frac{d^NR^{2N}(2N+1)^d}{|\xi|^{2N}}\\
\nonumber
\leq&&C'_\lambda\frac{(2^dR\sqrt{d})^{2N}}{|\xi|^{2N}}\\
\nonumber
\leq&&C_{\lambda'}|\xi|^{-2N}e^{\lambda'\varphi^*\left(\frac{2N}{\lambda'}\right)}
\eeqs
by Lemma~\ref{phistar}(viii), for some $C_\lambda,C'_\lambda,C_{\lambda'}>0$. Taking the infimum over $N\in\N_0$ and applying Lemma~\ref{phistar}(vi) we
have that, for all $\mu>0$ there exists $C_\mu>0$ such that for all $|\xi|\geq1$:
\beqsn
|\hat{f}(\xi)|\leq C_\mu e^{-\mu\omega(\xi)}.
\eeqsn
Since the above inequality is trivial for $|\xi|\leq1$, we finally have (c)(ii) and hence $f\in
\Sch_\omega(\R^d)$.
\end{proof}
In the following result, we denote by
\begin{equation}\label{qr}
Q_R:=\{\xi\in\R^d: |\xi|_\infty\leq R\},
\end{equation}
where  $\xi\in\R^d$ and $|\xi|_\infty$ is its sup norm.
\begin{Th}
\label{th1A1}
Let $R>0$ and $\omega$ a non-quasianalytic weight function.
The following conditions are equivalent:
\begin{itemize}
\item[(a)] The function  $f\in\PW_R^\omega(\R^d)$,

\item[(b)] The Fourier transform of $f$,
 $\hat{f}\in\D_{(\omega)}(\R^d)$ and
$\supp\hat{f}\subseteq  Q_R$.
\end{itemize}
\end{Th}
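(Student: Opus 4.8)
The plan is to prove the two implications separately. For $(a)\Rightarrow(b)$ I would first invoke Lemma~\ref{lemma1} to get $f\in\Sch_\omega(\R^d)$; since the Fourier transform is an automorphism of $\Sch_\omega(\R^d)$ this gives $\hat f\in\Sch_\omega(\R^d)$, and condition $(g)$ of Theorem~\ref{thSomega} shows $\Sch_\omega(\R^d)\subseteq\E_{(\omega)}(\R^d)$. Hence the whole content of $(b)$ reduces to proving $\supp\hat f\subseteq Q_R$: once compactness of the support is established, $\hat f\in\E_{(\omega)}(\R^d)$ with compact support is precisely $\hat f\in\D_{(\omega)}(\R^d)$.

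To locate the support I would use a Bang-type argument based on $L^2$-norms of directional derivatives. Fixing a coordinate $j$ and taking $\alpha=Ne_j$, the defining estimate of $\PW_R^\omega(\R^d)$ in \eqref{PWR}, together with the change of variables $y=x/(N+1)$ and the integrability $e^{-\sigma\omega}\in L^1(\R^d)$ from \eqref{add2}, gives
\[
\|D_j^N f\|_{L^2}\leq C_\lambda R^N (N+1)^{d/2},
\]
so that $\limsup_N\|D_j^N f\|_{L^2}^{1/N}\leq R$. On the other hand, the identity $\F(D^\alpha f)=\xi^\alpha\hat f$ and Plancherel yield $\|D_j^N f\|_{L^2}^2=(2\pi)^{-d}\int|\xi_j|^{2N}|\hat f(\xi)|^2\,d\xi$. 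If some $\xi_0\in\supp\hat f$ had $|\xi_0|_\infty=|(\xi_0)_j|>R$, continuity of $\hat f$ would provide a ball $B$ about $\xi_0$ on which $|\xi_j|\geq R+\varepsilon$ and $\int_B|\hat f|^2>0$, forcing $\liminf_N\|D_j^N f\|_{L^2}^{1/N}\geq R+\varepsilon$, a contradiction. Thus every $\xi\in\supp\hat f$ satisfies $|\xi|_\infty\leq R$, i.e. $\supp\hat f\subseteq Q_R$.

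For the converse $(b)\Rightarrow(a)$ I would start from Fourier inversion, writing $D^\alpha f(x)=(2\pi)^{-d}\int_{Q_R}\xi^\alpha\hat f(\xi)e^{i\langle x,\xi\rangle}\,d\xi$, which already gives $|D^\alpha f(x)|\leq C R^{|\alpha|}$ (enough when $|x|\leq|\alpha|+1$). To produce the decay factor $e^{\lambda\omega(x/(|\alpha|+1))}$ I would integrate by parts via $x^\beta e^{i\langle x,\xi\rangle}=D_\xi^\beta e^{i\langle x,\xi\rangle}$, move $D_\xi^\beta$ onto $\xi^\alpha\hat f$ by Leibniz, estimate $|D_\xi^\delta\xi^\alpha|\leq |\alpha|^{|\delta|}R^{|\alpha-\delta|}$ on $Q_R$ and use the $\E_{(\omega)}$-bounds $|D_\xi^{\beta-\delta}\hat f|\leq C_\mu e^{\mu\varphi^*(|\beta-\delta|/\mu)}$. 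Choosing $\beta=ke_j$ with $j$ a coordinate maximizing $|x_j|$ and absorbing the geometric factors in $k$ by Lemma~\ref{phistar}(iv), this leads to $(|x|/(|\alpha|+1))^k|D^\alpha f(x)|\leq C_\mu R^{|\alpha|}e^{\mu\varphi^*(k/\mu)}$ for all $k\in\N_0$. Taking the infimum over $k$ and applying Lemma~\ref{phistar}(vi) converts these power bounds into $|D^\alpha f(x)|\leq C_\mu R^{|\alpha|}e^{-(\mu-1/b)\omega(x/(|\alpha|+1))}$ when $|x|\geq|\alpha|+1$, while the region $|x|\leq|\alpha|+1$ is covered by the direct bound since $\omega$ is bounded there. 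Taking $\mu=\lambda+1/b$ yields exactly the seminorm estimate of \eqref{PWR}, so $f\in\PW_R^\omega(\R^d)$.

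The main obstacle is the bookkeeping in $(b)\Rightarrow(a)$: the Leibniz expansion produces the factorial weights $\alpha!/(\alpha-\delta)!$ as well as geometric factors of the form $(2\max\{1,R^{-1}\})^{k}$ and $d^{k/2}$, all of which must be absorbed into the Young-conjugate term through Lemma~\ref{phistar}(iv) before the infimum step of Lemma~\ref{phistar}(vi) can turn the resulting power weights into genuine $\omega$-decay at the scale $x/(|\alpha|+1)$. By contrast, the support localization in $(a)\Rightarrow(b)$ is clean once it is reduced, via Plancherel, to the one-dimensional Bang estimate $\limsup_N\|D_j^N f\|_{L^2}^{1/N}\leq R$.
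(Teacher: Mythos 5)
Your proof is correct and follows the same overall skeleton as the paper's (Lemma~\ref{lemma1} for the regularity of $\hat f$, integration by parts against the $\PW$-estimate to localize the support, and Fourier inversion plus repeated $\xi$-differentiation plus Lemma~\ref{phistar}(vi) for the converse), but it diverges in two technical choices. For $(a)\Rightarrow(b)$ the paper works pointwise in $L^1$: it bounds $|\hat f(\xi)|$ by $D_\lambda\, d\, R^{2N}(2N+1)^d/(\xi_1^{2N}+\cdots+\xi_d^{2N})$ and lets $N\to\infty$ to conclude $\hat f(\xi)=0$ whenever $|\xi|_\infty>R$; you instead pass through Plancherel, comparing $\limsup_N\|D_j^Nf\|_{L^2}^{1/N}\le R$ with the lower bound $R+\varepsilon$ coming from a ball in $\supp\hat f$ outside $Q_R$ --- this is a legitimate Bang-type variant, and the continuity of $\hat f$ (from $f\in L^1$) makes the positivity of $\int_B|\hat f|^2$ automatic. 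For $(b)\Rightarrow(a)$ the paper uses powers of the Laplacian $\Delta_\xi^N$ (so only even powers of $|x|$ appear) and pushes the accumulated geometric constants into the \emph{argument} of $\omega$, cleaning up at the end with \eqref{add3}; you use a single directional derivative $D_{\xi_j}^k$ in the coordinate realizing $|x|_\infty$ and absorb the constants $(2\max\{1,R^{-1}\})^k d^{k/2}$ into the Young-conjugate term via Lemma~\ref{phistar}(iv) \emph{before} taking the infimum, which lands you directly at the scale $x/(|\alpha|+1)$ and spares the final rescaling step. Both implementations are sound; your absorption-first bookkeeping is marginally cleaner at the end, while the paper's Laplacian trick avoids tracking which coordinate maximizes $|x_j|$. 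The splitting into the regions $|x|\lesssim|\alpha|+1$ (direct bound $CR^{|\alpha|}$ from $\hat f\in L^1(Q_R)$) and its complement (infimum argument, valid only for $t\ge1$ in Lemma~\ref{phistar}(vi)) is handled the same way in both proofs.
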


\begin{proof}
(a) $\Rightarrow$ (b). Let $f\in\PW_R^\omega(\R^d)$.
We integrate by parts,
\beqsn
\vert\hat{f}(\xi)\vert &=&\left\vert \frac{1}{\xi_1^{2N}+\dots +\xi_d^{2N}}\int_{\R^d}f(x)(D_{x_1}^{2N}+\dots +D_{x_d}^{2N})e^{-i\langle x,\xi\rangle}\,dx\right\vert \\
&\leq&\frac{1}{\xi_1^{2N}+\dots +\xi_d^{2N}}\sum_{j=1}^{{d}}
\int_{\R^d} \vert D_{x_j}^{2N} f(x)\vert\,dx.
\eeqsn
By hypothesis, we have that for every $\lambda>0$ there exists $C_\lambda$ such that
\beqs
\vert\hat{f}(\xi)\vert &\leq& C_\lambda \frac{1}{\xi_1^{2N}+\dots +\xi_d^{2N}}\sum_{j=1}^{{d}} R^{2N}\int_{\R^d} e^{-\lambda\omega(\frac{x}{2N+1})}\,dx \notag \\
&\leq& C_\lambda \frac{1}{\xi_1^{2N}+\dots +\xi_d^{2N}}{d} R^{2N}(2N+1)^d \int_{\R^d} e^{-\lambda\omega(y)}\,dy = D_{\lambda}\frac{{d} R^{2N}(2N+1)^d}{\xi_1^{2N}+\dots +\xi_d^{2N}}, \label{eq11}
\eeqs
for a constant $D_{\lambda}$ independent of $N$ and $\lambda\geq (d+1)/b$.
Now, we observe that for any $\xi\in\R^d$ such that $|\xi|_\infty>R$ we have $\sqrt[2N]{\xi_1^{2N}+\dots+\xi_d^{2N}}>R$, and so $\supp\hat{f}\subseteq Q_R$.

\vskip.5\baselineskip


\vskip.5\baselineskip

(b) $\Rightarrow$ (a) Suppose that $\hat{f}\in\D_{(\omega)}(\R^d)\subset\Sch_\omega(\R^d)$
with $\supp\hat{f}\subseteq Q_R$.
By Fourier inversion formula in $\Sch(\R^d)$, for {$x\neq 0$} and $N\in\N_0$:
\beqs
\nonumber
|D^\alpha f(x)|=&&\frac{1}{(2\pi)^d}\left|\int_{\R^d}\F(D^\alpha f)(\xi)e^{i\langle x,\xi\rangle}
d\xi\right|\\
\nonumber
\leq&&\left|\int_{\R^d}\xi^\alpha\hat{f}(\xi)\frac{1}{|x|^{2N}}\Delta_\xi^Ne^{i\langle x,\xi\rangle}
d\xi\right|\\
\nonumber
\leq&&\frac{1}{|x|^{2N}}\int_{\R^d}|\Delta_\xi^N\xi^\alpha\hat{f}(\xi)|d\xi\\
\nonumber
\leq&&\frac{1}{|x|^{2N}}\int_{\R^d}\sum_{{\vert\nu\vert=N}}
\frac{N!}{{\nu!}}\Big|D_{\xi_1}^{{2\nu_1}}\cdots D_{\xi_d}^{{2\nu_d}}
\Big(\xi_1^{\alpha_1}\cdots\xi_d^{\alpha_d}\hat{f}(\xi)\Big)\Big|d\xi\\
\nonumber
 \leq&&\frac{1}{|x|^{2N}}\sum_{{\vert\nu\vert=N}}
\frac{N!}{{\nu!}}
\sum_{h_1=0}^{{\min\{2\nu_1,\alpha_1\}}}\binom{{2\nu_1}}{h_1}
\cdots\sum_{h_d=0}^{{\min\{2\nu_d,\alpha_d\}}}\binom{{2\nu_d}}{h_d}\\
\label{4bis}
&&
\frac{\alpha_1!}{(\alpha_1-h_1)!}\cdots\frac{\alpha_d!}{(\alpha_d-h_d)!}
\int_{|\xi|_\infty\leq R}|\xi_1^{\alpha_1-h_1}\cdots\xi_d^{\alpha_d-h_d}|\cdot
\left|D_\xi^{{2\nu-h}}\hat{f}(\xi)\right|d\xi,
\eeqs
where we denoted {$D_\xi^{2\nu-h}=D_{\xi_1}^{2\nu_1-h_1}
 \cdots D_{\xi_d}^{2\nu_d-h_d}$}.
 Since $\hat{f}\in\Sch_\omega(\R^d)$,  there exists
 $C_{\mu,\lambda}>0$ such that, applying {Theorem~\ref{thSomega}(g)} in \eqref{4bis},
 for $|x|\geq1$ and $N\in\N_0$:
 \beqsn
|D^\alpha f(x)|\leq&&
 \frac{1}{|x|^{2N}}\sum_{{\vert\nu\vert=N}}
\frac{N!}{{\nu!}}
\sum_{h_1=0}^{{\min\{2\nu_1,{\alpha_1}\}}}\binom{{2\nu_1}}{h_1}
\cdots\sum_{h_d=0}^{{\min\{2\nu_d,{\alpha_d}\}}}\binom{{2\nu_d}}{h_d}\\
&&\cdot\alpha_1^{h_1}\cdots\alpha_d^{h_d}
\int_{|\xi|_\infty\leq R}{\vert\xi_1\vert^{\alpha_1-h_1}\dots\vert\xi_d\vert^{\alpha_d-h_d}} \left|D_\xi^{{2\nu-h}}\hat{f}(\xi)\right|d\xi\\
\leq&&\frac{1}{|x|^{2N}}
\sum_{{\vert\nu\vert=N}}
\frac{N!}{{\nu!}}
\sum_{h_1=0}^{{\min\{2\nu_1,{\alpha_1}\}}}\binom{{2\nu_1}}{h_1}
\cdots\sum_{h_d=0}^{{\min\{2\nu_d,{\alpha_d}\}}}\binom{{2\nu_d}}{h_d}\\
&&\cdot|\alpha|^{2N}\left(1+\frac1R\right)^{2N}
R^{|\alpha|}\int_{\R^d}C_{\mu,\lambda}e^{\lambda\varphi^*\left(\frac{2N-|h|}{\lambda}\right)}
e^{-\mu\omega(\xi)}d\xi\\
\leq&& C_\lambda\frac{1}{|x|^{2N}}d^{N}2^{2N}(|\alpha|+1)^{2N}
\left(1+\frac1R\right)^{2N}e^{\lambda\varphi^*\left(\frac{2N}{\lambda}\right)}R^{|\alpha|},
 \eeqsn
 for some $C_\lambda>0$, where we have fixed $\mu\geq (d+1)/b$.

Taking the infimum over $N\in\N_0$ and applying Lemma~\ref{phistar}(vi)
we have therefore,
for {$|x|\geq 2\sqrt{d}(\vert\alpha\vert+1)(1+\frac{1}{R})$},
\beqs
\label{6}
|D^\alpha f(x)|\leq C_\lambda e^{-\left(\lambda-\frac2b\right)
\omega\left(\frac{x}{2\sqrt{d}(|\alpha|+1)\left(1+\frac1R\right)}\right)-\frac{2a}{b}}R^{|\alpha|}
\eeqs
for $a\in\R,b>0$ as in condition $(\gamma)$ of Definition \ref{defomega}.

{Let us consider now $|x|< 2\sqrt{d}(\vert\alpha\vert+1)(1+\frac{1}{R})$. We have
\beqsn
\vert D^\alpha f(x)\vert &=& \frac{1}{(2\pi)^d} \left\vert \int_{\R^d} \mathcal{F}(D^\alpha f)(\xi) e^{i\langle x,\xi\rangle}\,d\xi\right\vert \\
&\leq&\int_{Q_R} \vert\xi_1\vert^{\alpha_1}\dots\vert\xi_d\vert^{\alpha_d}\vert\hat{f}(\xi)\vert\,d\xi \\
&\leq& CR^{\vert\alpha\vert},
\eeqsn
for $C=\Vert\hat{f}\Vert_{L^1(\R^d)}$. Since $\omega$ is increasing we have that \eqref{6} is true also for $|x|< 2\sqrt{d}(\vert\alpha\vert+1)(1+\frac{1}{R})$, for a constant $C_\lambda$ which depends on $\lambda,a,b,R,d$ and $\omega(1)$. By \eqref{6} and \eqref{add3} we
}
finally have that for every $\lambda'>0$
there exists $C_{\lambda'}>0$, depending on $\omega, \lambda', d, R, a$ and $b$, such that
\beqsn
|D^\alpha f(x)|\leq C_{\lambda'}e^{-\lambda'\omega\left(\frac{x}{|\alpha|+1}\right)}R^{|\alpha|}.
\eeqsn
This proves that $f\in\PW_R^\omega(\R^d)$.
\end{proof}


Let us define, for a function $g$ on $\R^d$:
\beqs
R_g:=\sup\{|x|_\infty:\ x\in\supp g\}.
\eeqs

The next result treats two different cases: the first one does not need weight functions and it is a natural extension of Theorem 1 of \cite{B} for several variables; in the other case, we assume two different additional conditions  on the weight function: subadditivity (condition \eqref{add4}) or a ``mild'' condition introduced in \cite{BMM} that guarantees that the weight does not increase too slowly (condition \eqref{bmm}). We shall use in the following the notation $f^{(\alpha)}$ for $D^\alpha f$.
\begin{Prop}
\label{th3A1}
Let $1\leq p\leq+\infty$ and $f\in C^\infty(\R^{d})$. We have:
\begin{enumerate}
\item If  $f^{(\alpha)}(x)\in L^p(\R^{d})$ for all $\alpha\in\N_0^{d}$, we have
\beqs
\label{stella0}
\lim_{n\to+\infty}\left(\max_{|\alpha|=n}\left\|f^{(\alpha)}(x)\right\|_{L^p}\right)^{1/n}
=R_{\hat f}.
\eeqs

\item Assume that $e^{\lambda\omega\left(\frac{x}{|\alpha|+1}\right)}f^{(\alpha)}(x)\in L^p(\R^{d})$ for all $\alpha\in\N_0^{d}$ and
for some $\lambda>0$, and that the weight function $\omega$ satisfies one of the following conditions:
\begin{enumerate}
\item It is sub-additive, i.e.,
\beqs
\label{add4}
\omega(t_1+t_2)\leq\omega(t_1)+\omega(t_2),\quad  t_1,t_2\geq 0;
\eeqs
 \item There is a constant $H>1$ such that
\beqs
\label{bmm}
2\omega(t)\leq \omega(H t)+H,\quad t\geq 0.
\eeqs

\end{enumerate}
Then
\beqs
\label{stella}
\lim_{n\to+\infty}\left(\max_{|\alpha|=n}\left\|e^{\mu\omega\left(\frac{x}{|\alpha|+1}\right)}f^{(\alpha)}(x)\right\|_{L^p}\right)^{1/n}
=R_{\hat f}, \mbox{ for all }\ 0\le \mu\le \lambda.
\eeqs
\end{enumerate}
\end{Prop}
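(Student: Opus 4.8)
The plan is to establish in both parts the two one-sided estimates $\limsup_n(\cdots)^{1/n}\le R_{\hat f}$ and $\liminf_n(\cdots)^{1/n}\ge R_{\hat f}$ and then squeeze them. Write $R:=R_{\hat f}$ and note $R=\max_{1\le j\le d}R_j$ with $R_j:=\sup\{|\xi_j|:\xi\in\supp\hat f\}$, since $|\xi|_\infty=\max_j|\xi_j|$. Two reductions organise the argument. First, \eqref{stella0} is the case $\mu=0$ of \eqref{stella}, and conversely the hypothesis of part~(2) forces $f^{(\alpha)}\in L^p(\R^d)$ for all $\alpha$ (as $\omega\ge0$ gives $e^{\lambda\omega}\ge1$, so $\|f^{(\alpha)}\|_{L^p}\le\|e^{\lambda\omega(\cdot/(|\alpha|+1))}f^{(\alpha)}\|_{L^p}$), so part~(1) applies inside part~(2) and yields the value $R$. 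Second, since $0\le\mu\le\lambda$ and $e^{\mu\omega}\ge1$ one has $\|D^\alpha f\|_{L^p}\le\|e^{\mu\omega(\cdot/(|\alpha|+1))}D^\alpha f\|_{L^p}\le\|e^{\lambda\omega(\cdot/(|\alpha|+1))}D^\alpha f\|_{L^p}$; hence in part~(2) the bound $\liminf\ge R$ is free from part~(1), and for $\limsup\le R$ it suffices to treat $\mu=\lambda$, the largest weight.

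For part~(1) the upper estimate is the band-limited Bernstein inequality: when $R<\infty$, $\hat f$ is supported in $Q_R$, so applying the one-variable Bernstein inequality in each direction (the remaining variables as parameters) gives $\|D^\alpha f\|_{L^p}\le R_1^{\alpha_1}\cdots R_d^{\alpha_d}\|f\|_{L^p}\le R^{|\alpha|}\|f\|_{L^p}$, hence $\max_{|\alpha|=n}\|D^\alpha f\|_{L^p}\le R^n\|f\|_{L^p}$ and $\limsup\le R$ (an $L^\infty$ version also follows from the inverse-Fourier/integration-by-parts estimate used for Theorem~\ref{th1A1}); the case $R=\infty$ is vacuous. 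For the lower estimate I would discard all but one coordinate: choosing $j_0$ with $R_{j_0}=R$ gives $\max_{|\alpha|=n}\|D^\alpha f\|_{L^p}\ge\|D_{j_0}^n f\|_{L^p}$, and the one-dimensional real Paley--Wiener theorem of Bang~\cite{B}, in the directional form $\lim_n\|D_{j_0}^n f\|_{L^p}^{1/n}=R_{j_0}$, forces $\liminf\ge R$. Squeezing gives \eqref{stella0} and, in particular, the existence of the limit.

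For $\limsup\le R$ in part~(2) (with $\mu=\lambda$ and $R<\infty$) I would turn $\hat f=\chi\hat f$ into a convolution: pick $\chi\in\D_{(\omega)}(\R^d)$ with $\chi\equiv1$ on $Q_R$ and $\supp\chi\subseteq Q_{R+\varepsilon}$ (possible by non-quasianalyticity $(\beta)$), so that $D^\alpha f=c\,K_\alpha* f$ with $K_\alpha:=\F^{-1}(\xi^\alpha\chi)\in\Sch_\omega(\R^d)$. Under subadditivity \eqref{add4} the weight splits cleanly, $e^{\lambda\omega(x/(n+1))}\le e^{\lambda\omega((x-y)/(n+1))}e^{\lambda\omega(y/(n+1))}$, so Young's inequality yields $\|e^{\lambda\omega(\cdot/(n+1))}D^\alpha f\|_{L^p}\le|c|\,\|e^{\lambda\omega(\cdot/(n+1))}K_\alpha\|_{L^1}\,\|e^{\lambda\omega(\cdot/(n+1))}f\|_{L^p}$ for $|\alpha|=n$, and the last factor is bounded by $\|e^{\lambda\omega(\cdot)}f\|_{L^p}$ uniformly in $n$ (monotonicity of $\omega$ and the case $\alpha=0$ of the hypothesis). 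Under \eqref{bmm} the analogous splitting via \eqref{add1} loses a factor $L$, but iterating \eqref{bmm} gives $\omega(t/(n+1))\le(n+1)^{-\log_H2}\,\omega(t)+H$, so the fraction of $\omega$ carried by the weight tends to $0$ with a bounded additive constant and is absorbed by the $\omega$-decay of $K_\alpha$ together with $e^{\lambda\omega}f\in L^p$. Everything thus reduces to the kernel estimate $\limsup_n\|e^{\lambda\omega(\cdot/(n+1))}K_\alpha\|_{L^1}^{1/n}\le R+\varepsilon$.

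The hard part is precisely this weighted kernel estimate, and it is where Lemma~\ref{phistar} and the extra hypotheses on $\omega$ are essential. I would bound $|x^\beta K_\alpha(x)|\le c\,\|\partial_\xi^\beta(\xi^\alpha\chi)\|_{L^1}$ by integration by parts, expand by Leibniz, use $|\xi^{\alpha-\gamma}|\le(R+\varepsilon)^{n-|\gamma|}$ on $\supp\chi$, $\alpha!/(\alpha-\gamma)!\le n^{|\gamma|}$, and the $\D_{(\omega)}$-bounds $\|\partial^{\beta-\gamma}\chi\|_\infty\le C_k e^{k\varphi^*(|\beta-\gamma|/k)}$. The decisive point is the scaling $x=(n+1)y$ in $\|e^{\lambda\omega(\cdot/(n+1))}K_\alpha\|_{L^1}$: the factor $(n+1)^{-|\beta|}$ produced by the change of variables cancels the dangerous $n^{|\gamma|}$ from differentiating $\xi^\alpha$, after which optimising over $|\beta|$ via Lemma~\ref{phistar}(vi) converts $e^{k\varphi^*(|\beta|/k)}$ into $\omega$-decay $e^{-k'\omega(y)}$ that is integrable against the fixed weight $e^{\lambda\omega(y)}$. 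The surviving Jacobian $(n+1)^d$ and the factor $(R+\varepsilon)^n$ give $\|e^{\lambda\omega(\cdot/(n+1))}K_\alpha\|_{L^1}\le C\,(n+1)^d(R+\varepsilon)^n$, whence $\limsup_n(\cdots)^{1/n}\le R+\varepsilon$ and, letting $\varepsilon\downarrow0$, $\le R$. Together with the free lower bound this proves \eqref{stella} for all $0\le\mu\le\lambda$. I expect the bookkeeping of these $\omega$-calculus inequalities, and in part~(1) making the directional one-dimensional lower bound fully rigorous in $L^p(\R^d)$ for every $p$, to be the only genuinely delicate points; the case $R=\infty$ is immediate from the lower bound alone.
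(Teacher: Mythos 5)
Your overall architecture matches the paper's. For the upper bound in part (2) you proceed exactly as the paper does: write $f=f*\phi$ with $\hat\phi\equiv 1$ on a neighbourhood of $\supp\hat f$ and $\supp\hat\phi\subseteq Q_{R+\varepsilon}$, split the weight $e^{\lambda\omega(x/(n+1))}$ over the two convolution factors using subadditivity, or the iterate of \eqref{bmm} combined with \eqref{add1}, and apply Young's inequality. Your weighted kernel estimate $\|e^{\lambda\omega(\cdot/(n+1))}K_\alpha\|_{L^1}\le C(n+1)^d(R+\varepsilon)^n$ is precisely the content of the implication (b)$\Rightarrow$(a) of Theorem~\ref{th1A1}; the paper quotes that theorem ($\phi\in\PW_{R_{\hat\phi}}^\omega(\R^d)$) rather than re-deriving it, so you could shorten your argument by doing the same. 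Your Bernstein shortcut for the unweighted upper bound in part (1) is a legitimate alternative to the paper's observation that part (1) is part (2) with $\lambda=0$, and your reductions ($\mu=\lambda$ suffices for the $\limsup$; the $\liminf$ in part (2) follows from part (1) since $e^{\mu\omega}\ge1$) are correct.

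The one genuine gap is the lower bound. You reduce everything to the assertion $\lim_n\|D_{j_0}^nf\|_{L^p(\R^d)}^{1/n}=R_{j_0}$ and attribute it to Bang ``in the directional form'', but Bang's Theorem~1 is a one-variable statement, and its $d$-dimensional directional version is exactly what must be proved here --- it occupies the entire second half of the paper's proof. The paper establishes $\liminf\ge|\xi^0|_\infty-\varepsilon$ for each $\xi^0\in\supp\hat f$ by a duality computation: it chooses $\psi\in\D_{(\omega)}(\R^d)$ with $\Pi_1\supp\psi\subseteq[\xi^0_1-\varepsilon/2,\xi^0_1+\varepsilon/2]$ and $\langle\hat f,\psi\rangle\neq0$, writes $(|\xi^0_1|-\varepsilon)^n\langle\hat f,\psi\rangle=(|\xi^0_1|-\varepsilon)^n\langle\widehat{D_1^nf},\xi_1^{-n}\psi\rangle$, and controls $\|\F^{-1}(\xi_1^{-n}\psi)\|_{L^{p'}}$ by inserting $(1-\Delta_\xi)^d$ and tracking the $n$-dependence of the constants (polynomial in $n$, with the decisive factor $(|\xi^0_1|-\varepsilon/2)^{-n}$ coming from $|\xi_1|^{-n}$ on $\supp\psi$). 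A reduction to the genuine one-dimensional Bang theorem --- say by pairing $f$ against a test function of the transverse variables --- can be made to work, but one must then verify that the reduced one-variable function has spectrum reaching out to $|\xi^0_1|-\varepsilon$ and has all derivatives in $L^p(\R)$, which costs about as much as the direct duality argument. As written, your proposal asserts rather than proves the half of the statement that carries the real content of a ``real Paley--Wiener'' theorem, namely that fast growth of $\max_{|\alpha|=n}\|\cdot\|_{L^p}$ is forced by every point of $\supp\hat f$.
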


\begin{Rem}
\begin{em}
We observe that, in general, $R_{\hat f}\in\{t\in\R;\ t\geq0\}\cup\{+\infty\}$, so that $\hat f$
may not have compact support. Moreover, the limit \eqref{stella} does not depend on $\mu$.
\end{em}
\end{Rem}

\begin{proof}[Proof of Proposition~\ref{th3A1}]
It suffices to see $(2)$, since $(1)$ can be proved in the same way (it is statement $(2)$ for $\lambda=0$). We can assume that $p<\infty$, because the same proof is valid for $p=\infty$ with some small modifications. First, we consider $\phi\in\Sch_\omega(\R^{d})$ such that $\hat\phi$ has
compact support.
Then, by Theorem~\ref{th1A1}, we have that
$\phi\in\PW_{R_{\hat\phi}}^\omega(\R^{d})$ and hence, for every
$1\leq p<+\infty$, $\lambda> 0$, and  $\sigma\geq 2/bp$:
\beqsn
\left\|e^{\lambda\omega\left(\frac{x}{|\alpha|+1}\right)}\phi^{(\alpha)}(x)\right\|_{L^p}^{1/n}=&&
\left\|e^{(\lambda+\sigma)\omega\left(\frac{x}{|\alpha|+1}\right)}\phi^{(\alpha)}(x)
e^{-\sigma\omega\left(\frac{x}{|\alpha|+1}\right)}\right\|_{L^p}^{1/n}\\
\leq&&\left\|e^{(\lambda+\sigma)\omega\left(\frac{x}{|\alpha|+1}\right)}\phi^{(\alpha)}(x)\right\|_{L^\infty}^{1/n}
\cdot\left\|e^{-\sigma\omega\left(\frac{x}{|\alpha|+1}\right)}\right\|_{L^p}^{1/n}\\
\leq&&(C_{\lambda+\sigma} R_{\hat\phi}^{|\alpha|})^{1/n}(|\alpha|+1)^{\frac{d}{pn}}
\|e^{-\sigma\omega(x)}\|_{L^p}^{1/n}.
\eeqsn
So, if we take the maximum when $|\alpha|=n$ and then the limit when $n$ tends to infinity, we deduce
\beqs
\label{301}
\limsup_{n\to+\infty}\left(\max_{|\alpha|=n}\left\|e^{\lambda\omega\left(\frac{x}{|\alpha|+1}\right)}\phi^{(\alpha)}(x)
\right\|_{L^p}^{1/n}\right)\leq R_{\hat\phi}\,, \qquad
 p\in[1,+\infty),\ \lambda>0.
\eeqs

Now, we consider $f\in C^\infty(\R^{d})$ such that
$e^{\lambda\omega\left(\frac{x}{|\alpha|+1}\right)}f^{(\alpha)}(x)\in L^p(\R^{d})$ for all $\alpha\in\N_0^{d}$.
We observe that $f\in\Sch'(\R^{d})$ and hence its Fourier transform is well
defined. Assume, for the moment, that
$\supp\hat f$ is compact, so that $R_{\hat f}\in\R$.

We observe that if the weight satisfies hypothesis $(2) (a)$, i.e., it is sub-additive, we have
\beqs\label{sub-adit}
\lambda\omega\left(\frac{x}{n+1}\right)\le \lambda\omega\left(\frac{y}{n+1}\right)+\lambda\omega\left(\frac{x-y}{n+1}\right),
\eeqs
for any $x,y\in\R^{d},$ $\lambda\ge 0$ and $n\in\N.$ On the other hand, it is easy to deduce from hypothesis $(2)(b)$ that for each $k\in\N,$
\beqs\nonumber
2^{k}\omega(x)\le \omega(H^{k}x)+H(2^{k-1}+2^{k-2}+\cdots+1), \quad x\in\R^{d},
\eeqs
and hence, $\omega(x)\le 2^{-k}\omega(H^{k}x)+H$, for all $x\in\R^{d}.$ Now, we take $k\in\N$ so that $L\le 2^{k}$, where $L\ge 1$ is the constant of condition $(\alpha)$ of Definition~\ref{defomega}. Then, we select $n\in\N$ big enough with $H^{k}\le n+1$ to deduce, from \eqref{add1},
\beqs\label{BMM-consequence}\nonumber
\lambda\omega\left(\frac{x}{n+1}\right)\le && \lambda L\omega\left(\frac{y}{n+1}\right)+\lambda L\omega\left(\frac{x-y}{n+1}\right)+\lambda L\\ \nonumber
\le && \lambda L\omega\left(\frac{y}{n+1}\right)+\lambda L 2^{-k}\omega\left(\frac{H^{k}(x-y)}{n+1}\right)+\lambda L+H\\
\le && \lambda L\omega\left(\frac{y}{n+1}\right)+\lambda \omega(x-y)+\lambda L+H,
\eeqs
for all $x,y\in\R^{d}$. Hence, under both hypotheses on the weight function $\omega$, we have, by \eqref{sub-adit} or \eqref{BMM-consequence}, for each $x,y\in\R^{d}$ and $n$ big enough,
\beqs\label{hypotheses}
\lambda\omega\left(\frac{x}{n+1}\right)\le \lambda L\omega\left(\frac{y}{n+1}\right)+\lambda \omega(x-y)+D_{\lambda},
\eeqs
for some constant $D_{\lambda}$ that depends on $\lambda\ge 0$ and the weight function $\omega.$

Let $\varepsilon>0$ and choose $\phi\in\Sch_\omega(\R^{d})$ such that $\hat{\phi}\equiv1$
in a neighborhood of $[-R_{\hat f},R_{\hat f}]^{d}$ and $\hat{\phi}\equiv0$
outside $[-R_{\hat f}-\varepsilon,R_{\hat f}+\varepsilon]^{d}$. Then
$\hat{f}=\hat{f}\cdot\hat{\phi}$ and hence, by the properties of the Fourier transform, $f=f\ast\phi$. Now, by \eqref{hypotheses}, we obtain
\beqs
\nonumber
&&\limsup_{n\to+\infty}\left(\max_{|\alpha|=n}\left\|e^{\lambda\omega\left(\frac{x}{|\alpha|+1}\right)}f^{(\alpha)}(x)
\right\|_{L^p}^{1/n}\right)=
\limsup_{n\to+\infty}\left(\max_{|\alpha|=n}\left\|e^{\lambda\omega\left(\frac{x}{n+1}\right)}
{f}\ast{\phi}^{(\alpha)}(x)
\right\|_{L^p}^{1/n}\right)\\
\nonumber
&&\qquad =\limsup_{n\to+\infty}\left(\max_{|\alpha|=n}\left\|
e^{\lambda\omega\left(\frac{x}{n+1}\right)}\int_{\R^d} {\phi}^{(\alpha)}(y){f}(x-y)dy\right\|_{L^p}^{1/n}\right)\\
\label{add5}
 &&\qquad \le \limsup_{n\to+\infty}e^{D_{\lambda}/n}\left(\max_{|\alpha|=n}
\left\|\int_{\R^d}{\phi}^{(\alpha)}(y)e^{\lambda L\omega\left(\frac{y}{n+1}\right)}{f}(x-y)
e^{\lambda\omega(x-y)}dy\right\|_{L^p}^{1/n}\right)\\
\nonumber
&&\qquad \le \limsup_{n\to+\infty}\left(\max_{|\alpha|=n}\left\|e^{\lambda L\omega\left(\frac{x}{n+1}\right)}
\phi^{(\alpha)}(x)\right\|_{L^1}^{1/n}\right)\left\|
e^{\lambda\omega(x)}f(x)\right\|_{L^p}^{1/n} \le R_{\hat\phi}\leq R_{\hat f}+\varepsilon,
\eeqs
since, by assumption, $e^{\lambda\omega(x)} f(x)\in L^p(\R^{d})$ and, by the construction of $\phi$, $R_{\hat\phi}\leq R_{\hat f}+\varepsilon$. Now, as $\varepsilon>0$ is arbitrary, we obtain
\beqs
\label{302}
\limsup_{n\to+\infty}\left(\max_{|\alpha|=n}\left\|e^{\lambda\omega\left(\frac{x}{n+1}\right)}f^{(\alpha)}(x)
\right\|_{L^p}^{1/n}\right)\leq R_{\hat f}.
\eeqs
We remark that when $\supp\hat f$ is not compact, $R_{\hat{f}}=+\infty$ and, in this case, \eqref{302} is still valid.

Take now $0\neq\xi^0\in\supp\hat f$, and assume w.l.o.g. that $0<\varepsilon<|\xi^0_{1}|=|\xi^{0}|_{\infty}$, where $\xi^{0}=(\xi^{0}_{1},\ldots,\xi^{0}_{d})\in \R^{d}$. We take
$\psi\in\D_{(\omega)}(\R^{d})$ with $\Pi_{1}\supp\psi\subseteq\left[\xi^0_{1}-\frac\varepsilon2,
\xi^0_{1}+\frac\varepsilon2\right]$ and $\langle\hat{f},\psi\rangle\neq0$, where $\Pi_{1}:\R^{d}\to \R$ is the projection in the first variable.

Then, for $\xi\in\R^{d}$ with $\xi_{1}\neq 0$, $\lambda>0$ and $1\leq p<+\infty$ we have:
\beqsn
\lefteqn{(|\xi^0_{1}|-\varepsilon)^n|\langle\hat{f}(\xi),\psi(\xi)\rangle|=
(|\xi^0_{1}|-\varepsilon)^n|\langle\xi_{1}^n\hat{f}(\xi),\xi_{1}^{-n}\psi(\xi)\rangle|}\\
&&\ \ =(|\xi^0_{1}|-\varepsilon)^n|\langle\widehat{D_{1}^nf}(\xi),\xi_{1}^{-n}\psi(\xi)\rangle|\\
&&\ \ =(|\xi^0_{1}|-\varepsilon)^n|\langle{D^n_{1}f}(x),\F^{-1}(\xi^{-n}_{1}\psi(\xi))(x)\rangle|\\
&&\ \ \le (\vert\xi^0_{1}\vert-\varepsilon)^n \Vert D_{1}^{n}f\Vert_{L^p} \Vert \mathcal{F}^{-1} (\xi^{-n}_{1} \psi(\xi))\Vert_{L^{p'}}\\
&&\ \ \le  (\vert\xi_{1}^0\vert-\varepsilon)^n \Vert e^{\lambda\omega\left(\frac{x}{n+1}\right)} D^{n}_{1}f(x)\Vert_{L^p}
\left\Vert \frac{1}{(1+x_{1}^2+\cdots+x_{d}^{2})^{d}}\mathcal{F}^{-1}
\left[(1-\Delta_\xi)^{d}(\xi^{-n}_{1}\psi(\xi))\right]\right\Vert_{L^{p'}}\\
&&\ \ \le  (\vert\xi^0_{1}\vert-\varepsilon)^n \Vert e^{\lambda\omega\left(\frac{x}{n+1}\right)} D^{n}_{1}f(x)\Vert_{L^p}
\left\Vert\frac{1}{(1+x_{1}^2+\cdots+x_{d}^{2})^{d}}\right\Vert_{L^{p'}} \left\Vert \mathcal{F}^{-1} \left[(1-\Delta_\xi)^{d}(\xi_{1}^{-n}\psi(\xi))\right]\right\Vert_{L^\infty}.
\eeqsn
We have
\beqsn
\lefteqn{ \mathcal{F}^{-1} \left[(1-\Delta_\xi)^{d}(\xi^{-n}_{1}\psi(\xi))\right]=\sum_{|\nu|=\nu_{1}+\cdots+\nu_{d+1}=d}\frac{d!}{\nu!} \mathcal{F}^{-1}\left[D^{2\nu_{1}}_{\xi_{1}}\cdots D^{2\nu_{d}}_{\xi_{d}}(\xi^{-n}_{1}\psi(\xi))\right]}\\
&&\ \ \ = \sum_{|\nu|=d}\frac{d!}{\nu!}\sum_{h=0}^{2\nu_{1}}\binom{{2\nu_1}}{h}i^{h}\frac{(n+h-1)!}{(n-1)!} \mathcal{F}^{-1}\left[\xi^{-n-h}_{1} D^{2\nu_{1}-h}_{\xi_{1}}\cdots D^{2\nu_{d}}_{\xi_{d}}\psi(\xi)\right].
\eeqsn

Therefore, we obtain
\beqsn
\lefteqn{\left\Vert \mathcal{F}^{-1} \left[(1-\Delta_\xi)^{d}(\xi^{-n}_{1}\psi(\xi))\right]\right\Vert_{L^\infty}}\\
&&\le 4^{d} (d+1)^{d}(n+2d)^{2d} \max_{|\nu|=d,0\le h \le 2\nu_{1}}
\Vert \xi_{1}^{-n-h}D^{2\nu_{1}-h}_{\xi_{1}}\cdots D^{2\nu_{d}}_{\xi_{d}}\psi(\xi)\Vert_{L^1}\\
&&\leq\frac{1}{(\vert\xi_{1}^0\vert-\varepsilon/2)^n} 4^{d} (d+1)^{d}(n+2d)^{2d} \max_{|\nu|=d,0\le h \le 2\nu_{1}}
\Vert \xi_{1}^{-h}D^{2\nu_{1}-h}_{\xi_{1}}\cdots D^{2\nu_{d}}_{\xi_{d}}\psi(\xi)\Vert_{L^1}.
\eeqsn
We then obtain
\beqsn
\lefteqn{(\vert\xi^0_{1}\vert-\varepsilon)\vert\langle \hat{f},\psi\rangle\vert^{1/n}}\\
&&\le \frac{\vert\xi^0_{1}\vert-\varepsilon}{\vert\xi^0_{1}\vert-\varepsilon/2}
\Vert e^{\lambda\omega\left(\frac{x}{n+1}\right)} D^{n}_{1}f(x)\Vert_{L^p}^{1/n}
\left\Vert\frac{1}{(1+x_{1}^2+\cdots+x_{d}^{2})^{d}}\right\Vert_{L^{p'}}^{1/n} (n+2d)^{2d/n}C(\psi)^{1/n},
\eeqsn
for a constant $C(\psi)$ that depends on $\psi$, the support of $\psi$ and its partial derivatives up to the order $2d$, and the dimension $d$. Hence, since $\frac{\vert\xi_{1}^0\vert-\varepsilon}{\vert\xi^0_{1}\vert-\varepsilon/2}\leq 1$,
\beqsn
|\xi^0|_{\infty}-\varepsilon\leq\liminf_{n\to+\infty}
\left(\max_{|\alpha|=n}\left\|e^{\lambda\omega\left(\frac{x}{n+1}\right)}f^{(\alpha)}(x)
\right\|_{L^p}^{1/n}\right)
\leq\limsup_{n\to+\infty}\left(\max_{|\alpha|=n}\left\|e^{\lambda\omega\left(\frac{x}{n+1}\right)}f^{(\alpha)}(x)
\right\|_{L^p}^{1/n}\right)\leq R_{\hat f}
\eeqsn
by \eqref{302}.

By the arbitrariness of $\varepsilon>0$ and then of
$\xi^0\in\supp\hat f$:
\beqsn
R_{\hat f}\leq\liminf_{n\to+\infty}\left(\max_{|\alpha|=n}\left\|e^{\lambda\omega\left(\frac{x}{n+1}\right)}f^{(\alpha)}(x)
\right\|_{L^p}^{1/n}\right)
\leq\limsup_{n\to+\infty}\left(\max_{|\alpha|=n}\left\|e^{\lambda\omega\left(\frac{x}{n+1}\right)}f^{(\alpha)}(x)
\right\|_{L^p}^{1/n}\right)
\leq R_{\hat f},
\eeqsn
and, hence, there exists
\beqsn
\lim_{n\to+\infty}\left(\max_{|\alpha|=n}\left\|e^{\lambda\omega\left(\frac{x}{n+1}\right)}f^{(\alpha)}(x)
\right\|_{L^p}^{1/n}\right)=R_{\hat f},
\eeqsn
for $\lambda>0$ and $1\leq p<+\infty$.
\end{proof}

\begin{Rem}\label{add6}
The condition
$e^{\lambda\omega\left(\frac{x}{|\alpha|+1}\right)}f^{(\alpha)}(x)\in L^p$ for all $\lambda\ge 0$ is equivalent to
$e^{\lambda\omega(x)}f^{(\alpha)}(x)\in L^p$ for all $\lambda\ge 0$ by \eqref{add3}. Therefore, if in Proposition \ref{th3A1} we ask that $e^{\lambda\omega(x)} f^{(\alpha)}(x)\in L^p(\R^{d})$
for all $\alpha\in\N_0^{d}$ and all $\lambda\geq 0$, \eqref{stella} is true without the additional assuptions
\eqref{add4} or \eqref{bmm}. Indeed, in \eqref{add5} we can use \eqref{add1} directly.
\end{Rem}

As we have already mentioned, Proposition \ref{th3A1} in the case $\lambda=0$ is \cite[Theorem 1]{B} for several variables, cf. \cite[Theorem 3]{A1} also. On the other hand, we are interested in the case $\lambda>0$ in order to get Paley-Wiener theorems for ultradifferentiable functions; see Theorem \ref{th4A2} below. To this aim, first we prove that, under the assumptions of Proposition~\ref{th3A1}, if \eqref{stella} is satisfied for some $R_{\hat f}\in\R$ and for all $\lambda>0$, then $u\in\Sch_\omega(\R^{d})$. We need some lemmas.



\begin{Lemma}
\label{lemma310}
Let $f\in C^\infty(\R^{d})$ such that $e^{\lambda\omega(x)}f^{(\alpha)}(x)
\in L^p(\R^{d})$ for all $\alpha\in\N_0^{d}$, $\lambda>0$, and some $1\leq p\leq +\infty$. Then $f\in\Sch(\R^{d})$.
\end{Lemma}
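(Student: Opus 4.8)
The plan is to verify directly that $f$ satisfies the defining seminorm estimates of the Schwartz class, namely $\sup_{x\in\R^d}|x^\beta D^\alpha f(x)|<+\infty$ for all $\alpha,\beta\in\N_0^d$. The only structural input I need from the weight is condition $(\gamma)$ of Definition~\ref{defomega}, which forces $e^{\lambda\omega}$ to dominate every polynomial.

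First I would record the elementary consequence of $(\gamma)$: since $\omega(x)\ge a+b\log(1+|x|)$, we have $e^{\lambda\omega(x)}\ge e^{a\lambda}(1+|x|)^{b\lambda}$, so that for each $\beta$ there is $C_\beta>0$ with $(1+|x|)^{|\beta|}\le C_\beta e^{\lambda_\beta\omega(x)}$, where $\lambda_\beta:=|\beta|/b$. Writing $x^\beta D^\alpha f=\big(x^\beta e^{-\lambda_\beta\omega(x)}\big)\big(e^{\lambda_\beta\omega(x)}D^\alpha f\big)$ and noting that the first factor is bounded while the second lies in $L^p$ by hypothesis, I conclude that $x^\beta D^\alpha f\in L^p(\R^d)$ for all $\alpha,\beta\in\N_0^d$.

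In the case $p=+\infty$ this already finishes the argument: using $\omega\ge 0$,
\[
|x^\beta D^\alpha f(x)|\le C_\beta e^{\lambda_\beta\omega(x)}|D^\alpha f(x)|\le C_\beta\big\|e^{\lambda_\beta\omega}D^\alpha f\big\|_{L^\infty}<+\infty.
\]
For $1\le p<+\infty$ I would upgrade the $L^p$-bounds on all derivatives to an $L^\infty$-bound by Sobolev embedding. By the Leibniz rule, for every $\gamma\in\N_0^d$ the derivative $D^\gamma(x^\beta D^\alpha f)$ is a finite linear combination of functions of the form $x^{\beta'}D^{\alpha'}f$ with $|\beta'|\le|\beta|$ and $|\alpha'|\le|\alpha|+|\gamma|$; by the previous step each of these belongs to $L^p(\R^d)$. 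Hence $x^\beta D^\alpha f\in W^{k,p}(\R^d)$ for every $k\in\N$, and choosing $k>d/p$ the Sobolev embedding $W^{k,p}(\R^d)\hookrightarrow L^\infty(\R^d)$ yields $x^\beta D^\alpha f\in L^\infty(\R^d)$, that is, $\sup_{x}|x^\beta D^\alpha f(x)|<+\infty$.

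Since $\alpha,\beta$ are arbitrary, this proves $f\in\Sch(\R^d)$. There is no serious obstacle here: the decisive point is the domination of polynomials by $e^{\lambda\omega}$ coming from $(\gamma)$, which converts the weighted hypotheses into polynomial-weighted $L^p$ control of all derivatives; the only mildly technical parts are the bookkeeping in the Leibniz expansion of $D^\gamma(x^\beta D^\alpha f)$ and the invocation of the standard Sobolev embedding to pass from $L^p$-integrability of every derivative to boundedness.
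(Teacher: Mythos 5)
Your proof is correct, but it takes a genuinely different route from the paper's. The paper stays on the Fourier side: it observes $f\in\Sch'(\R^d)$, writes $\xi^\alpha D^\beta\hat f=\F\bigl(D_x^\alpha(x^\beta f)\bigr)$, expands by Leibniz, and bounds each resulting term $\|x^{\beta-\gamma}D^{\alpha-\gamma}f\|_{L^1}$ by H\"older against a factor $x^{\beta-\gamma}e^{-\lambda\omega(\cdot)}\in L^{p'}$ (whose integrability comes from condition $(\gamma)$ via \eqref{add2}), concluding that all seminorms $\sup_\xi|\xi^\alpha D^\beta\hat f(\xi)|$ are finite, i.e.\ $\hat f\in\Sch(\R^d)$ and hence $f\in\Sch(\R^d)$. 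You instead work entirely on the physical side: condition $(\gamma)$ lets $e^{\lambda\omega}$ absorb any polynomial weight, giving $x^\beta D^\alpha f\in L^p$ for all $\alpha,\beta$, and then the Sobolev embedding $W^{k,p}(\R^d)\hookrightarrow L^\infty(\R^d)$ for $k>d/p$ (after a Leibniz bookkeeping step) upgrades this to the sup-norm seminorms of $\Sch$. Both arguments ultimately rest on $(\gamma)$; the paper's version trades the Sobolev embedding for H\"older plus the fact that $\F$ is an automorphism of $\Sch$, and as a by-product delivers explicit uniform bounds on $\xi^\alpha D^\beta\hat f$, while yours is more elementary in that it never invokes the Fourier transform and handles $p=\infty$ with no extra machinery at all. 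One small point worth making explicit in your write-up: for $\beta=0$ your exponent $\lambda_\beta=|\beta|/b$ vanishes, and the hypothesis is stated only for $\lambda>0$; this is harmless since $\omega\geq 0$ gives $|D^\alpha f|\leq e^{\lambda\omega}|D^\alpha f|$ for any $\lambda>0$, but it deserves a word.
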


\begin{proof}
Since $f\in\Sch'(\R^{d})$, we can apply the Fourier
transform to $f$. We fix $\alpha,\beta\in\N_0^{d}$ and choose $\lambda>0$ big enough such that
$x^{\beta-\gamma} e^{-\lambda\omega\left(\frac{x}{|\alpha-\gamma|+1}\right)}\in L^{p'}(\R^d)$,  for every
$\gamma\leq\min\{\alpha,\beta\}$ and for $1/p+1/p'=1$, and we apply H\"older's inequality to obtain
\beqsn
|\xi^\alpha D^\beta\hat{f}(\xi)|=&&|\F\big(D_x^\alpha(x^\beta f(x))\big)(\xi)|
\leq\int_{\R^{d}}
|D_x^\alpha(x^\beta f(x))|dx\\
\leq&&\sum_{\afrac{\gamma\leq\alpha}{\gamma\leq\beta}}\binom\alpha\gamma
\frac{\beta!}{(\beta-\gamma)!}\int_{\R^{d}}|x^{\beta-\gamma} D_x^{\alpha-\gamma} f(x)|dx\\
=&&\sum_{\afrac{\gamma\leq\alpha}{\gamma\leq\beta}}\binom\alpha\gamma
\frac{\beta!}{(\beta-\gamma)!}\int_{\R^{d}}|
e^{\lambda\omega\left(\frac{x}{|\alpha-\gamma|+1}\right)}D_x^{\alpha-\gamma} f(x)|
\cdot |x^{\beta-\gamma}e^{-\lambda\omega\left(\frac{x}{|\alpha-\gamma|+1}\right)}|dx\\
\leq && \sum_{\afrac{\gamma\leq\alpha}{\gamma\leq\beta}}\binom\alpha\gamma
\frac{\beta!}{(\beta-\gamma)!}\|e^{\lambda\omega\left(\frac{x}{|\alpha-\gamma|+1}\right)}
f^{(\alpha-\gamma)}(x)\|_{L^p}
\cdot \|x^{\beta-\gamma}e^{-\lambda\omega\left(\frac{x}{|\alpha-\gamma|+1}\right)}\|_{L^{p'}}\leq C_{\alpha,\beta,\lambda},
\eeqsn
which finishes the proof.
\end{proof}



\begin{Lemma}
\label{lemma308}
Let $1\leq p\leq +\infty$ and $f\in C^\infty(\R^{d})$ with
$e^{\lambda\omega(x)}f^{(\alpha)}(x)
\in L^p(\R^{d})$ for all $\alpha\in\N_0^{d}$ and for all $\lambda>0$.
If $\hat f$ has compact support, we have
\beqs
\label{304}
\sup_{\xi\in\R^{d}}e^{\lambda\omega(\xi)}|\hat{f}(\xi)|<+\infty,
\qquad\mbox{ for all }\lambda>0.
\eeqs
\end{Lemma}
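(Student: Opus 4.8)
The plan is to observe that, once $\hat f$ is known to be a genuine continuous function, the conclusion \eqref{304} is immediate from the compactness of its support: the weight $e^{\lambda\omega(\xi)}$ is continuous and therefore bounded on every compact set, so multiplying a bounded compactly supported function by it cannot destroy boundedness. The only point that genuinely requires justification is thus the promotion of $\hat f$ from an a priori tempered distribution to a bounded continuous function.

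To this end I would first invoke Lemma~\ref{lemma310}. Its hypotheses are exactly those assumed here, namely $f\in C^\infty(\R^d)$ with $e^{\lambda\omega(x)}f^{(\alpha)}(x)\in L^p(\R^d)$ for all $\alpha\in\N_0^d$ and all $\lambda>0$, and it yields $f\in\Sch(\R^d)$. Since the Fourier transform is an automorphism of $\Sch(\R^d)$, we obtain $\hat f\in\Sch(\R^d)\subseteq C^\infty(\R^d)$; in particular $\hat f$ is continuous and bounded on $\R^d$.

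Writing $K:=\supp\hat f$, which is compact by hypothesis, and recalling that $\omega$ is continuous (Definition~\ref{defomega}), the quantity $M_\lambda:=\sup_{\xi\in K}e^{\lambda\omega(\xi)}$ is finite for every $\lambda>0$. Consequently
\[
\sup_{\xi\in\R^{d}}e^{\lambda\omega(\xi)}|\hat f(\xi)|
=\sup_{\xi\in K}e^{\lambda\omega(\xi)}|\hat f(\xi)|
\leq M_\lambda\,\sup_{\xi\in K}|\hat f(\xi)|<+\infty,
\]
which is precisely \eqref{304}. I do not expect any substantive obstacle: the entire content lies in the correct logical ordering, where one must first secure that $\hat f$ is an honest continuous function, via Lemma~\ref{lemma310} together with the $\Sch(\R^d)$-invariance of $\F$, before exploiting the compactness of its support. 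Once that is in place, the estimate is a one-line consequence of the finiteness of a continuous function on a compact set.
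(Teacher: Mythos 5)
Your proof is correct, but it takes a genuinely different (and much shorter) route than the paper. You exploit the compact support hypothesis purely qualitatively: after Lemma~\ref{lemma310} gives $f\in\Sch(\R^{d})$, hence $\hat f\in\Sch(\R^{d})$ continuous and bounded, the product $e^{\lambda\omega(\xi)}|\hat f(\xi)|$ vanishes off the compact set $K=\supp\hat f$ and is continuous on $K$, so the supremum is trivially finite. The paper instead runs a quantitative ``real Paley--Wiener'' argument: it integrates by parts $n$ times in the direction realizing $|\xi|_\infty$ to get $|\hat f(\xi)|\leq C_\lambda|\xi_1|^{-n}(n+1)^{d/p'}\|e^{\lambda\omega\left(\frac{x}{n+1}\right)}D_1^n f\|_{L^p}$, uses the compactness of $\supp\hat f$ only through Proposition~\ref{th3A1} and Remark~\ref{add6} to guarantee that these derivative norms are bounded by $D^n$ for some $D$, and then optimizes over $n$ via Lemma~\ref{phistar}(viii) and (vi) to produce the decay $|\hat f(\xi)|\leq C_\mu e^{-\mu\omega(\xi)}$. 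What the paper's approach buys is that the estimate is read off entirely from the real-variable growth of $\|e^{\lambda\omega(x/(n+1))}f^{(\alpha)}\|_{L^p}$ — it is the same template used in Lemma~\ref{lemma1} and Theorem~\ref{th1A1}, and it would still function if one only knew the geometric bound on the derivative norms rather than compactness of the support. What your approach buys is brevity: for the lemma exactly as stated, once $\hat f$ is secured as a continuous function, the compact support hypothesis makes the conclusion immediate, and your logical ordering (first Lemma~\ref{lemma310} and the $\Sch$-invariance of $\F$, then the compactness argument) is sound.
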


\begin{proof}
Assume that $\xi=(\xi_{1},\ldots,\xi_{d})\neq0$, and that $|\xi|_{\infty}=|\xi_{1}|$. Given $n\in\N_0$ and $\lambda\geq \frac{d+1}{bp'}$, where $1/p+1/p'=1$, we can write
\beqsn
|\hat{f}(\xi)|=&&\frac{1}{|\xi_{1}|^n}\left|\int_{\R^d} f(x)D_{x_1}^ne^{-ix\xi}dx\right|\\
\leq&&\frac{1}{|\xi_{1}|^n}\int_{\R^d} e^{\lambda\omega\left(\frac{x}{n+1}\right)}|D_{1}^{n}f(x)|
e^{-\lambda\omega\left(\frac{x}{n+1}\right)}dx\\
\leq&&\frac{1}{|\xi_{1}|^n}\left\|e^{\lambda\omega\left(\frac{x}{n+1}\right)}D^{n}_{1}f(x)
\right\|_{L^p}
\cdot\left\|e^{-\lambda\omega\left(\frac{x}{n+1}\right)}\right\|_{L^{p'}}\\
=&&\frac{C_\lambda}{|\xi_{1}|^n}(n+1)^{\frac{d}{p'}}
\left\|e^{\lambda\omega\left(\frac{x}{n+1}\right)}D^{n}_{1}f(x)\right\|_{L^p}
\eeqsn
for $C_\lambda=\|e^{-\lambda\omega(x)}\|_{L^{p'}}<+\infty$ from \eqref{add2}.

Since $\hat f$ has compact support by assumption, by Proposition~\ref{th3A1} and Remark~\ref{add6}, we have that
\eqref{stella} is satisfied with $R_{\hat f}\in\R$. Therefore, there exists a constant
$D\in\R$, depending only on $f$, such that, for all $n\in\N_{0}$,
\beqsn
\|e^{\lambda\omega\left(\frac{x}{n+1}\right)}D^{n}_{1}f(x)\|_{L^p}
\leq D^n,
\eeqsn
and hence, by Lemma~\ref{phistar}(viii),
\beqs
\label{303}
|\hat{f}(\xi)|\leq C_\lambda\frac{D^n}{|\xi_{1}|^n}(n+1)^{\frac{d}{p'}}
\leq C_\lambda\frac{\tilde{D}^n}{|\xi_{1}|^n}n!
\leq C'_\lambda|\xi_{1}|^{-n}e^{\lambda\varphi^*\left(\frac n\lambda\right)}
\eeqs
for some $\tilde{D},\ C'_\lambda>0$.

Now, by Lemma~\ref{phistar}(vi), if we assume $|\xi_{1}|\geq1$,
\beqsn
|\hat{f}(\xi)|\leq C'_\lambda e^{-\left(\lambda-\frac1b\right)\omega(\xi_{1})-\frac ab}.
\eeqsn
Hence, it is suffices to take $\lambda>1/b$ big enough to finish the proof.
\end{proof}

\begin{Lemma}
\label{lemma312}
Let $1\leq p\leq +\infty$ and $f\in C^\infty(\R^{d})$ such that
$e^{\lambda\omega(x)}f^{(\alpha)}(x)\in L^p(\R)$ for all $\alpha\in\N_0^{d}$
and $\lambda>0$. If $\hat f$ has compact support, then $f\in\Sch_\omega(\R^{d})$.
\end{Lemma}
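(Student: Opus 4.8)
The plan is to verify that $f$ satisfies condition $(c)'$ of Theorem~\ref{propSomega}, which among functions of $\Sch(\R^d)$ characterizes membership in $\Sch_\omega(\R^d)$. All of the analytic work has in fact already been carried out in the two preceding lemmas, so what remains is to assemble them with the right choice of exponents. First I would apply Lemma~\ref{lemma310} to the present hypotheses to conclude that $f\in\Sch(\R^d)$; this places us in exactly the situation in which Theorem~\ref{propSomega} can be invoked.

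Next I would read off the two halves of condition $(c)'$ from the hypotheses and from Lemma~\ref{lemma308}. Taking $\alpha=0$ in the standing assumption $e^{\lambda\omega(x)}f^{(\alpha)}(x)\in L^p(\R^d)$ gives $\|e^{\lambda\omega(x)}f(x)\|_{L^p}<+\infty$ for every $\lambda>0$, which is precisely condition $(c)'(i)$ of Theorem~\ref{propSomega} for our value of $p$. For the Fourier side I would invoke Lemma~\ref{lemma308}: since $\hat f$ has compact support and $f$ satisfies the hypotheses of that lemma, we obtain $\sup_{\xi\in\R^d}e^{\lambda\omega(\xi)}|\hat f(\xi)|<+\infty$ for all $\lambda>0$, that is, $\|e^{\lambda\omega(\xi)}\hat f(\xi)\|_{L^\infty}<+\infty$. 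This is exactly condition $(c)'(ii)$ with the choice $q=\infty$.

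Since Theorem~\ref{propSomega} allows the two exponents in $(c)'$ to be chosen independently in $[1,+\infty]$, taking the pair $(p,\infty)$ shows that $f$ satisfies $(c)'$, whence $f\in\Sch_\omega(\R^d)$, which is the desired conclusion. There is no genuinely hard step: the proposition is a short corollary of Lemmas~\ref{lemma310} and~\ref{lemma308} together with the equivalent system of seminorms provided by Theorem~\ref{propSomega}. The only point requiring a little care is the bookkeeping of norms---matching the $L^p$ estimate on $f$ itself and the $L^\infty$ estimate on $\hat f$ to the two halves of $(c)'$---and the observation that the characterization permits mismatched exponents $p$ and $q$, so that the $L^\infty$ bound coming from the compact support of $\hat f$ may legitimately be used on the transform side regardless of the value of $p$ in the hypotheses.
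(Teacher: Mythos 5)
Your argument is correct and coincides with the paper's own proof: both deduce $f\in\Sch(\R^d)$ from Lemma~\ref{lemma310}, obtain the $L^\infty$ bound on $e^{\lambda\omega(\xi)}\hat f(\xi)$ from Lemma~\ref{lemma308}, take $\alpha=0$ in the hypothesis for the $L^p$ bound on $e^{\lambda\omega(x)}f(x)$, and conclude via condition $(c)'$ of Theorem~\ref{propSomega} with $q=\infty$. Nothing further is needed.
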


\begin{proof}
By Lemmas \ref{lemma310} and \ref{lemma308}
we have that $f\in\Sch(\R^{d})$ and, for every $\lambda>0$, there exists $C_\lambda>0$
such that
$\|e^{\lambda\omega(\xi)}\hat{f}(\xi)\|_{L^\infty}\leq C_\lambda.$
Moreover
$\|e^{\lambda\omega(x)}f(x)\|_{L^p}\leq C'_\lambda$
for some $C'_\lambda>0$ by assumption. It follows, from Theorem~\ref{propSomega}$\,(c)'$ with $q=\infty$, that $f\in\Sch_\omega(\R^{d})$.
\end{proof}

\subsection{Relation with the Wigner transform}
Proposition~\ref{th3A1} proves that the radius of the support of $\hat f$
can be computed with the limit
\eqref{stella} for any $\lambda\geq0$.
Now, we give a characterization of the support of $\hat f$ in terms of the Wigner
transform. First, we introduce the following real Paley-Wiener space defined by means of the Gabor transform:
\begin{Def}
\label{defPWG}
Let $T,R>0$ and define, for $\psi\in\PW_T^\omega(\R^d)$,
\beqsn
\PWG_R^{\omega,\psi}(\R^d):=\{&&f\in C^\infty(\R^d)\cap\Sch'_\omega(\R^d):
\ \mbox{for each }\lambda,\mu>0,\\
&&\sup_{N\in\N_0}\sup_{x,\xi\in\R^d}(R+T)^{-N}
\frac{1}{(N+1)^{d/2}}e^{\lambda\omega\left(\frac{x}{N+1}\right)+\mu\omega(\xi)}
|\xi|_\infty^N|V_\psi f(x,\xi)|<+\infty\}.
\eeqsn
\end{Def}

\begin{Prop}
\label{propG1}
Let $\psi\in\PW_T^\omega(\R^d)$. Then
\beqsn
\PW_R^\omega(\R^d)\subseteq\PWG_R^{\omega,\psi}(\R^d).
\eeqsn
\end{Prop}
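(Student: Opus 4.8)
The plan is to verify directly the three requirements hidden in Definition~\ref{defPWG}: that $f\in C^\infty(\R^d)\cap\Sch'_\omega(\R^d)$, and that the weighted supremum defining $\PWG_R^{\omega,\psi}(\R^d)$ is finite for every $\lambda,\mu>0$. The first requirement is immediate, since $f\in C^\infty(\R^d)$ by the definition of $\PW_R^\omega(\R^d)$, while Lemma~\ref{lemma1} gives $f\in\Sch_\omega(\R^d)\subseteq\Sch'_\omega(\R^d)$. As also $\psi\in\PW_T^\omega(\R^d)\subseteq\Sch_\omega(\R^d)$, the Gabor transform $V_\psi f$ lies in $\Sch_\omega(\R^{2d})$, as recalled before Theorem~\ref{thSomega}. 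In particular, by the definition of $\Sch_\omega(\R^{2d})$ (Theorem~\ref{thSomega}(c)), together with $\lambda\omega(x)+\mu\omega(\xi)\le(\lambda+\mu)\omega(x,\xi)$, one gets the global bound
\[
\sup_{x,\xi\in\R^d}e^{\lambda\omega(x)+\mu\omega(\xi)}|V_\psi f(x,\xi)|<+\infty,\qquad\text{for all }\lambda,\mu>0 .
\]

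The decisive step is a localization of $V_\psi f$ in the frequency variable. By Theorem~\ref{th1A1}, the hypothesis $f\in\PW_R^\omega(\R^d)$ forces $\supp\hat f\subseteq Q_R$, and in the same way $\supp\hat\psi\subseteq Q_T$. I would then use the covariance identity $V_\psi f(x,\xi)=e^{-i\langle x,\xi\rangle}V_{\hat\psi}\hat f(\xi,-x)$ already exploited in the proof of Theorem~\ref{propSomega}: the integrand of $V_{\hat\psi}\hat f(\xi,-x)=\int_{\R^d}\hat f(w)\overline{\hat\psi(w-\xi)}e^{i\langle w,x\rangle}\,dw$ is supported on those $w$ with $w\in Q_R$ and $w-\xi\in Q_T$, which forces $|\xi|_\infty\le|w|_\infty+|w-\xi|_\infty\le R+T$. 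Hence
\[
V_\psi f(x,\xi)=0\qquad\text{whenever }|\xi|_\infty>R+T .
\]
The same conclusion follows from $V_\psi f(x,\cdot)=(2\pi)^{-d}\,\hat f\ast\widehat{\overline{\psi(\cdot-x)}}$ and $\supp\big(\hat f\ast\widehat{\overline{\psi(\cdot-x)}}\big)\subseteq Q_R+Q_T=Q_{R+T}$, using $Q_T=-Q_T$.

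It then remains to handle the supremum over $N\in\N_0$. Fix $\lambda,\mu>0$ and a point $(x,\xi)$ with $|\xi|_\infty\le R+T$ (for $|\xi|_\infty>R+T$ every term vanishes by the previous step). The map
\[
N\longmapsto (R+T)^{-N}\,\frac{1}{(N+1)^{d/2}}\,e^{\lambda\omega\left(\frac{x}{N+1}\right)}\,|\xi|_\infty^{N}
\]
is non-increasing: the factor $(R+T)^{-N}|\xi|_\infty^{N}=\big(|\xi|_\infty/(R+T)\big)^{N}$ is non-increasing because $|\xi|_\infty/(R+T)\le1$; the factor $(N+1)^{-d/2}$ is decreasing; and $e^{\lambda\omega(x/(N+1))}$ is non-increasing since $\omega$ is increasing while $|x|/(N+1)$ decreases with $N$. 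A product of non-negative non-increasing factors is non-increasing, so the supremum over $N$ of the full quantity is attained at $N=0$, where it equals $e^{\lambda\omega(x)+\mu\omega(\xi)}|V_\psi f(x,\xi)|$. Taking the supremum over $x,\xi$ and invoking the global decay of the first paragraph yields the finiteness demanded by Definition~\ref{defPWG}, and therefore $f\in\PWG_R^{\omega,\psi}(\R^d)$.

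I expect no heavy estimate in this proof. The only genuine ingredient is the frequency localization $V_\psi f(x,\xi)=0$ for $|\xi|_\infty>R+T$, which is exactly what pins down the base $R+T$ in Definition~\ref{defPWG}; once it is available, the weights $(R+T)^{-N}$, $(N+1)^{-d/2}$ and $e^{\lambda\omega(x/(N+1))}$ merely shrink the terms with $N\ge1$, so that the $\Sch_\omega(\R^{2d})$-decay of $V_\psi f$ furnishes everything else. The main point requiring care is the bookkeeping of the supports under the covariance identity (equivalently, under the convolution $\hat f\ast\widehat{\overline{\psi(\cdot-x)}}$) and the symmetry $Q_T=-Q_T$, which is what produces the sum $Q_{R+T}$.
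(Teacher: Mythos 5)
Your proof is correct, but it follows a genuinely different and more economical route than the paper's. The paper first proves, by integration by parts and the Leibniz rule applied to $D_{y_j}^N\bigl(f(y)\overline{\psi(y-x)}\bigr)$, the intermediate ``real'' estimate $|\xi|_\infty^N|V_\psi f(x,\xi)|\leq C_\lambda(R+T)^N(N+1)^d e^{-\lambda\omega\left(\frac{x}{N+1}\right)}$, which uses only the pointwise derivative bounds defining $\PW_R^\omega$ and $\PW_T^\omega$; it then combines this with the frequency localization $\Pi_\xi\bigl(\supp V_\psi f\bigr)\subseteq Q_{R+T}$ and the $\Sch_\omega(\R^{2d})$-decay of $V_\psi f$ by taking a geometric mean of the two bounds, which is where the exponent $d/2$ and the halved weights come from. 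You instead observe that, once the support information from Theorem~\ref{th1A1} is in hand, the entire $N$-dependence in Definition~\ref{defPWG} is monotone: on the set where $V_\psi f$ does not vanish one has $|\xi|_\infty\le R+T$, so $(R+T)^{-N}|\xi|_\infty^N\le 1$, while $(N+1)^{-d/2}\le 1$ and $e^{\lambda\omega(x/(N+1))}\le e^{\lambda\omega(x)}$ because $\omega$ is increasing; hence only the $N=0$ term matters, and that is exactly the $\Sch_\omega(\R^{2d})$-decay of $V_\psi f$. All the ingredients you use (Lemma~\ref{lemma1}, Theorem~\ref{th1A1}, the covariance identity and the convolution description of the frequency support, $V_\psi f\in\Sch_\omega(\R^{2d})$) are available in the paper, and your support bookkeeping and the monotonicity argument are sound, including the degenerate cases $\xi=0$ and $|\xi|_\infty>R+T$. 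What the paper's longer route buys is the standalone estimate \eqref{308}, which quantifies the frequency decay of $V_\psi f$ directly from the defining inequalities of the $\PW$-spaces without invoking compactness of $\supp\hat f$; but since the paper itself then invokes Theorem~\ref{th1A1} to get \eqref{310}--\eqref{311}, nothing is lost for this particular proposition by your shortcut, which in fact makes transparent why the base $R+T$ and the factor $(N+1)^{d/2}$ in Definition~\ref{defPWG} are harmless.
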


\begin{proof}
Let $f\in\PW_R^\omega(\R^d)$. Fix $\xi\in\R^d\setminus\{0\}$.
Then $|\xi|_\infty=|\xi_j|$ for some $1\leq j\leq d$ and hence
\beqsn
|\xi|_\infty^N|V_\psi f(x,\xi)|=&&|\xi_j^NV_\psi f(x,\xi)|
=\left|\xi_j^N\int_{\R^d}f(y)\overline{\psi(y-x)}e^{-i\langle y,\xi\rangle}dy\right|\\
=&&\left|\int_{\R^d}f(y)\overline{\psi(y-x)}D_{y_j}^N(e^{-i\langle y,\xi\rangle})dy\right|\\
=&&\left|\int_{\R^d}D_{y_j}^N\big(f(y)\overline{\psi(y-x)}\big)e^{-i\langle y,\xi\rangle}dy\right|\\
\leq&&\sum_{k=0}^N\binom Nk
\int_{\R^d}|D_{y_j}^kf(y)|\cdot|D_{y_j}^{N-k}\psi(y-x)|dy.
\eeqsn

Since $f\in\PW_R^\omega(\R^d)$ and $\psi\in\PW_T^\omega(\R^d)$, it is not difficult to see
that for every $\lambda>0$ there is $C_{\lambda}>0$ such that
\beqs
\label{308}
|\xi|_\infty^N|V_\psi f(x,\xi)|\leq C_{\lambda}(R+T)^N
e^{-\lambda\omega\left(\frac{x}{N+1}\right)}(N+1)^d,
\eeqs
for all $x,\xi\in\R^d$ and $N\in\N_0$.

Moreover, since $f,\psi\in\Sch_\omega(\R^d)$ by Lemma~\ref{lemma1}, then
 $V_\psi f\in\Sch_\omega(\R^{2d})$ also (\cite[Thm. 2.7]{GZ}) and, hence, for all
$\mu>0$ there exists $C_\mu>0$ such that
\beqs
\label{309}
|V_\psi f(x,\xi)|\leq C_\mu e^{-\mu\omega(\xi)},
\eeqs
since $\omega(x,\xi)\geq\omega(\xi)$.

By Theorem~\ref{th1A1} we have that $\supp\hat{f}\subseteq Q_R$,
$\supp\hat{\psi}\subseteq Q_T$ and hence the projection on $\xi$ of the support
of $V_\psi f$ satisfies
\beqs
\label{310}
\Pi_\xi\left(\supp V_\psi f(x,\xi)\right)\subseteq Q_{R+T},\quad x\in\R^{d},
\eeqs
as it can be deduced for example from \cite[formula (3.8)]{G}.
From \eqref{309} and \eqref{310} we have that
\beqs
\label{311}
|\xi|_\infty^N|V_\psi f(x,\xi)|\leq C_\mu(R+T)^Ne^{-\mu\omega(\xi)}
\eeqs
for all $x,\xi\in\R^d$, $N\in\N_0$ and $\mu>0$.

Combining \eqref{308} and \eqref{311} we finally have:
\beqsn
|\xi|_\infty^N|V_\psi f(x,\xi)|=&&\sqrt{\big(|\xi|_\infty^N|V_\psi f(x,\xi)|\big)^2}\\
\leq&&\sqrt{C_{\lambda}(R+T)^Ne^{-\lambda\omega\left(\frac{x}{N+1}\right)}
(N+1)^dC_\mu(R+T)^Ne^{-\mu\omega(\xi)}}\\
\leq&&C_{\lambda,\mu}(R+T)^N(N+1)^{d/2}
e^{-\frac{\lambda}{2}\omega\left(\frac{x}{N+1}\right)}
e^{-\frac\mu2\omega(\xi)}
\eeqsn
for some $C_{\lambda,\mu}>0$ and for all $x,\xi\in\R^d$, $N\in\N_0$, $\lambda,\mu>0$. Therefore $f\in\PWG_R^{\omega,\psi}(\R^d)$.
\end{proof}

Given the space defined in \eqref{40}, we have the following result:
\begin{Prop}
\label{prop1}
Let $f,\psi\in\Sch_\omega(\R^d)$ and $p,q\in[1,+\infty]$.
Then, for every $\lambda,\mu\geq0$,
\beqs
\label{312}
\limsup_{N\to+\infty}\left\|e^{\lambda\omega\left(\frac{x}{N+1}\right)+\mu\omega(\xi)}
|\xi|_\infty^N V_\psi f(x,\xi)\right\|_{L^{p,q}}^{1/N}\leq R_{\hat f}+R_{\hat\psi}.
\eeqs
\end{Prop}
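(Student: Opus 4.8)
The plan is to estimate the $L^{p,q}$-norm of $e^{\lambda\omega(x/(N+1))+\mu\omega(\xi)}|\xi|_\infty^N V_\psi f(x,\xi)$ by transferring the power $|\xi|_\infty^N$ onto derivatives, using the fundamental identity $V_\psi f(x,\xi)=\int_{\R^d}f(y)\overline{\psi(y-x)}e^{-i\langle y,\xi\rangle}\,dy$, exactly as in the proof of Proposition~\ref{propG1}. Fixing $\xi\neq0$ and writing $|\xi|_\infty=|\xi_j|$, I would replace $\xi_j^N$ by $D_{y_j}^N$ acting on $e^{-i\langle y,\xi\rangle}$, integrate by parts, and expand $D_{y_j}^N\big(f(y)\overline{\psi(y-x)}\big)$ by Leibniz. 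This produces a sum $\sum_{k=0}^N\binom Nk$ of integrals of $|D^k f(y)||D^{N-k}\psi(y-x)|$, so the key point is to bound the convolution-type quantity $\|e^{\lambda\omega(x/(N+1))}\int |f^{(k)}(y)|\,|\psi^{(N-k)}(y-x)|\,dy\|_{L^{p,q}}$.

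The main technical step is the weight-splitting. I would insert $\lambda\omega(x/(N+1))\le \lambda L\,\omega(y/(N+1))+\lambda\,\omega((x-y)/(N+1))+(\text{const})$ using subadditivity-type inequalities derived from $(\alpha)$ (as in \eqref{add1} and \eqref{hypotheses} of Proposition~\ref{th3A1}), which lets the $x$-weight be distributed onto the $y$-argument of $f^{(k)}$ and the $(x-y)$-argument of $\psi^{(N-k)}$. Since both $f,\psi\in\Sch_\omega(\R^d)$, by Proposition~\ref{th3A1} (applied in the sharper Remark~\ref{add6} form, valid for all $\lambda\ge0$ without the extra hypotheses \eqref{add4}/\eqref{bmm}) the quantities $\max_{|\alpha|=k}\|e^{\lambda\omega(\,\cdot\,/(k+1))}f^{(\alpha)}\|_{L^p}$ grow at most like $(R_{\hat f}+\varepsilon)^k$, and similarly for $\psi$ with radius $R_{\hat\psi}$. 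I would then estimate the $y$-integral by Young's inequality for convolutions in the $L^{p,q}$ setting, controlling the resulting product of norms so that the Leibniz sum $\sum_{k=0}^N\binom Nk (R_{\hat f}+\varepsilon)^k(R_{\hat\psi}+\varepsilon)^{N-k}=(R_{\hat f}+R_{\hat\psi}+2\varepsilon)^N$ appears, which after taking $N$th roots gives the claimed bound $R_{\hat f}+R_{\hat\psi}+2\varepsilon$.

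To bring in the $\xi$-weight $e^{\mu\omega(\xi)}$ I would use the elementary square-root trick of Proposition~\ref{propG1}: one bounds $|\xi|_\infty^N|V_\psi f|$ separately by a factor carrying $e^{-\mu\omega(\xi)}$ (available because $V_\psi f\in\Sch_\omega(\R^{2d})$ and $\supp V_\psi f$ has bounded $\xi$-projection, cf.\ \eqref{310}), and then takes the geometric mean of the two estimates so that both the $x$-decay and the $\xi$-decay survive, at the cost of halving the constants $\lambda,\mu$; since the final statement allows arbitrary $\lambda,\mu\ge0$, this harmless halving is absorbed by rescaling. Finally I would let $\varepsilon\to0^+$. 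The principal obstacle I anticipate is organizing the Leibniz-Young estimate uniformly in $N$ so that the binomial sum closes to a clean $(R_{\hat f}+R_{\hat\psi})^N$-type bound; the combinatorial factors $\binom Nk$ and the polynomial prefactors $(N+1)^{d/2}$ must be shown to contribute only a subexponential factor that vanishes after taking $N$th roots, which is where the growth estimates from Proposition~\ref{th3A1} and the weight inequalities of Remark~\ref{add10} must be combined with care.
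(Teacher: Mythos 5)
Your plan is workable and rests on the same mechanism as the paper's, but it is organized quite differently. The paper's own proof of Proposition~\ref{prop1} is very short: after the trivial reduction to the case $R_{\hat f},R_{\hat\psi}<+\infty$ (which you leave implicit), it invokes Theorem~\ref{th1A1} to get $f\in\PW^\omega_{R_{\hat f}}(\R^d)$ and then Proposition~\ref{propG1} to get the \emph{pointwise} bound $|\xi|_\infty^N|V_\psi f(x,\xi)|\leq C_{\lambda,\mu}(R_{\hat f}+R_{\hat\psi})^N(N+1)^{d/2}e^{-\lambda\omega(x/(N+1))-\mu\omega(\xi)}$, so that the $L^{p,q}$ estimate \eqref{312} follows from a single H\"older step against $e^{-\sigma\omega(x/(N+1))-\tau\omega(\xi)}\in L^{p,q}$ (using \eqref{add2}); the polynomial factors die under the $N$-th root. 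What you propose is essentially to re-derive the content of Proposition~\ref{propG1} inline, but directly in the mixed norm: integration by parts, Leibniz, weight splitting via \eqref{add1}, the binomial sum closing to $(R_{\hat f}+R_{\hat\psi}+2\varepsilon)^N$ via Young's inequality, with the derivative growth taken from Proposition~\ref{th3A1}/Remark~\ref{add6} rather than from membership in $\PW^\omega_{R_{\hat f}}$ (the paper's route gives the cleaner $C_\lambda R_{\hat f}^{|\alpha|}$ with no $\varepsilon$, but both come from the same source). This buys nothing over the paper's argument and costs you the re-derivation, but it is not wrong.

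The one place where your assembly needs care is the final combination of the $x$-decay with the $\xi$-weight. The Leibniz/Young step produces an $L^p_x$-norm bound for a quantity that is independent of $\xi$, while the $\xi$-decay in \eqref{311} is pointwise; a ``geometric mean of the two estimates'' in the style of Proposition~\ref{propG1} requires both bounds to be pointwise, and if you instead take the geometric mean of an $L^p$-controlled function of $x$ with a function of $\xi$, the mixed norm of $G(x)^{1/2}H(\xi)^{1/2}$ involves $\|G\|_{L^{p/2}}$, which is problematic for $p<2$. The fix is easy and available in the paper: either keep the whole estimate pointwise (as in \eqref{308}) and only integrate at the very end, or observe that $\Pi_\xi\supp V_\psi f\subseteq Q_{R_{\hat f}+R_{\hat\psi}}$ by \eqref{310}, so that $e^{\mu\omega(\xi)}$ is bounded by a constant on the support and the $L^q_\xi$ integration contributes only the measure of a fixed compact set. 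Either repair closes the argument; as written, this step is the only genuinely loose joint in your proposal.
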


\begin{proof}
If $\supp\hat f$ or $\supp\hat\psi$ are not compact, then $R_{\hat f}=+\infty$ or,
respectively, $R_{\hat\psi}=+\infty$, so the inequality \eqref{312} is trivial.
So, we can assume that $\supp\hat{f}$ and $\supp\hat{\psi}$ are compact, and hence
$R_{\hat f},R_{\hat\psi}\in\R$.
By Theorem~\ref{th1A1} and Proposition~\ref{propG1}, we have
$f\in\PW_{R_{\hat f}}^\omega(\R^d)\subseteq\PWG_{R_{\hat f}}^{\omega,\psi}(\R^d)$
and hence for $\sigma$ and $\tau$ sufficiently large, from \eqref{add2} we obtain
\beqsn
\lefteqn{\limsup_{N\to+\infty}\left\|e^{\lambda\omega\left(\frac{x}{N+1}\right)+\mu\omega(\xi)}
|\xi|_\infty^N V_\psi f(x,\xi)\right\|_{L^{p,q}}^{1/N}}\\
&&\le
\limsup_{N\to+\infty}\left\|e^{(\lambda+\sigma)\omega\left(\frac{x}{N+1}\right)+(\mu+\tau)\omega(\xi)}
|\xi|_\infty^N V_\psi f(x,\xi)\right\|_{L^\infty}^{1/N}
\cdot\left\|e^{-\sigma\omega\left(\frac{x}{N+1}\right)-\tau\omega(\xi)}\right\|_{L^{p,q}}^{1/N}\\
&&\le \limsup_{N\to+\infty}C_{\lambda,\mu}^{\frac1N}(R_{\hat f}+R_{\hat \psi})
(N+1)^{\frac{d}{2N}+\frac{d}{pN}}\|e^{-\sigma\omega(x)-\tau\omega(\xi)}\|_{L^{p,q}}^{1/N}=R_{\hat f}+R_{\hat\psi},
\eeqsn
for some $C_{\lambda,\mu}>0$, if $p<+\infty$.
If $p=+\infty$ the proof is similar.
\end{proof}

We introduce now the following notation for the translation and modulation operators; for
$x,\xi,x^0,\xi^0\in\R^d$ we denote
\beqsn
T_{x^0} f(x) = f(x-x^0),\qquad M_{\xi^0} f(\xi)=e^{i\langle \xi^0,\xi\rangle}f(\xi).
\eeqsn

\begin{Ex}
\label{remstrict}
\begin{em}
The inequality \eqref{312} is strict, in general.
Let us consider, for instance, $f\in\Sch_\omega(\R)$ with $\supp\hat{f}\subseteq
[R_{\hat f}-\mu,R_{\hat f}]$ for some $0<\mu<R_{\hat f}<+\infty$.
Then
\beqs
\label{330}
\||\xi|_\infty^N V_f f(x,\xi)\|_{L^{p,q}}^{1/N}
\leq\mu\|V_f f(x,\xi)\|_{L^{p,q}}^{1/N}
\eeqs
since
\beqsn
\Pi_\xi\supp V_f f(x,\xi)=\union\limits_{x\in\R}\supp(\hat{f}\ast M_{-x}\bar{\tilde{\hat{f}}})(\xi)
\subseteq[R_{\hat f}-\mu,R_{\hat f}]+[-R_{\hat f},-R_{\hat f}+\mu]=[-\mu,\mu],
\eeqsn
where $\tilde{f}(x)=f(-x)$,
by \cite[Lemma 3.1.1]{G}. Since $\|V_f f(x,\xi)\|_{L^{p,q}}$ does not depend on $N$, letting $N\to+\infty$ in
\eqref{330} we get that
\beqsn
\limsup_{N\to+\infty}\||\xi|_\infty^N V_f f(x,\xi)\|_{L^{p,q}}^{1/N}
\leq\mu<R_{\hat f}<2R_{\hat f}.
\eeqsn
\end{em}
\end{Ex}
On the other hand, for the right choice of the window function we get the equality in \eqref{312}, as the next result shows. This fact becomes crucial for the analysis of real Paley-Wiener theorems in terms of the Wigner transform. In the next result the number $R_{\hat{f}}$ could be $+\infty.$

\begin{Prop}
\label{prop2}
Let $f\in\Sch_\omega(\R^d)$ and $p,q\in[1,+\infty]$. Then, for all $\lambda,\mu\geq0$, we have
\beqs
\label{318}
\lim_{N\to+\infty}\left\|
e^{\lambda\omega\left(\frac{x}{N+1}\right)+\mu\omega(\xi)}
|\xi|_\infty^N V_{\tilde f} f(x,\xi)\right\|_{L^{p,q}}^{1/N}=2R_{\hat f}.
\eeqs
\end{Prop}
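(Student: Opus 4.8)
The plan is to prove the two inequalities separately; the upper bound will be immediate and all the content will be in the matching lower bound. For the upper bound, note first that reflection preserves $\Sch_\omega(\R^d)$, so we may apply Proposition~\ref{prop1} with the window $\psi=\tilde f\in\Sch_\omega(\R^d)$. Since $\hat{\tilde f}(\xi)=\hat f(-\xi)$ we have $\supp\hat{\tilde f}=-\supp\hat f$, whence $R_{\hat{\tilde f}}=R_{\hat f}$. Proposition~\ref{prop1} then gives, for all $\lambda,\mu\geq0$, the estimate $\limsup_{N\to+\infty}\|e^{\lambda\omega(\frac{x}{N+1})+\mu\omega(\xi)}|\xi|_\infty^N V_{\tilde f}f(x,\xi)\|_{L^{p,q}}^{1/N}\leq R_{\hat f}+R_{\hat{\tilde f}}=2R_{\hat f}$. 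It remains to show the reverse inequality for the $\liminf$.

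For the lower bound I would argue directly from the frequency support of $V_{\tilde f}f$, rather than by the duality method used in Proposition~\ref{th3A1}. The reduction is the following: suppose that for every $\delta>0$ one can find a point $(x^0,\xi^0)\in\R^{2d}$ with $|\xi^0|_\infty>2R_{\hat f}-\delta$ (and with $|\xi^0|_\infty>M$ for arbitrary $M>0$ when $R_{\hat f}=+\infty$) at which $V_{\tilde f}f$ does not vanish. Since $V_{\tilde f}f\in\Sch_\omega(\R^{2d})$ is continuous, there is then a bounded open box $U\times V$ containing $(x^0,\xi^0)$ on which $|V_{\tilde f}f|\geq c>0$ and $|\xi|_\infty\geq 2R_{\hat f}-\delta$. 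On such a box the weight $e^{\lambda\omega(\frac{x}{N+1})+\mu\omega(\xi)}$ is bounded below by a positive constant $\kappa$ uniformly in $N$, because $x$ stays bounded and $\omega\geq0$. Restricting the $L^{p,q}$-norm to $U\times V$ gives $\|e^{\lambda\omega(\frac{x}{N+1})+\mu\omega(\xi)}|\xi|_\infty^N V_{\tilde f}f\|_{L^{p,q}}\geq (2R_{\hat f}-\delta)^N c\,\kappa\,|U|^{1/p}|V|^{1/q}$; taking the $N$-th root and letting $N\to+\infty$ yields $\liminf\geq 2R_{\hat f}-\delta$, and then $\delta\downarrow0$ (respectively $M\to+\infty$) finishes the bound.

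The crux is therefore the nonvanishing statement, and this is exactly where the special window $\tilde f$ is essential, since cancellation could in principle wipe out the frequency content near $2R_{\hat f}$. I would produce the required point as follows. Pick $\eta\in\supp\hat f$ with $|\eta|_\infty>R_{\hat f}-\delta/2$ and $\hat f(\eta)\neq0$; this is possible because $\{\hat f\neq0\}$ is dense in $\supp\hat f$ and $\sup_{\supp\hat f}|\cdot|_\infty=R_{\hat f}$. Set $\xi^0=2\eta$, so $|\xi^0|_\infty>2R_{\hat f}-\delta$. Using $\tilde f(y-x)=f(x-y)$, the slice $\Phi(x):=V_{\tilde f}f(x,2\eta)$ equals the convolution $(M_{-2\eta}f)\ast\overline{f}$, where $\overline{f}$ denotes the complex conjugate of $f$. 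Consequently $\hat\Phi(\zeta)=\hat f(\zeta+2\eta)\,\overline{\hat f(-\zeta)}$, and evaluating at $\zeta=-\eta$ gives $\hat\Phi(-\eta)=|\hat f(\eta)|^2\neq0$. Hence $\Phi\not\equiv0$, so $V_{\tilde f}f(\cdot,2\eta)$ is nonzero on an open set of $x$-values, and by joint continuity $V_{\tilde f}f$ is nonzero on a neighborhood of some $(x^0,2\eta)$, which is precisely the point required above.

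I expect the nonvanishing step to be the only real obstacle: it is the place where one must rule out cancellation, and it is resolved cleanly by the identity $\hat\Phi(-\eta)=|\hat f(\eta)|^2$, which is special to the window $\tilde f$. The remaining ingredients — the reflection identity $R_{\hat{\tilde f}}=R_{\hat f}$, the passage from a nonvanishing box to a lower bound on the $L^{p,q}$-norm (including the cases where $p$ or $q$ equals $+\infty$, handled with essential suprema), and the case $R_{\hat f}=+\infty$ — are routine.
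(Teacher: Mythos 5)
Your proof is correct, and the lower bound is obtained by a genuinely different route from the paper's. For the upper bound you do exactly what the paper does (Proposition~\ref{prop1} with $\psi=\tilde f$ and $R_{\hat{\tilde f}}=R_{\hat f}$). For the lower bound the paper argues by duality: for $\xi^0\in\supp\hat f$ it builds $\phi_\varepsilon,\psi_\varepsilon\in\Sch_\omega(\R^d)$ with $\supp\hat\phi_\varepsilon,\hat\psi_\varepsilon\subseteq Q_{\varepsilon/4}$ and nonzero pairings $\langle\hat f,T_{\xi^0}\hat\psi_\varepsilon\rangle$, $\langle\hat f,T_{\xi^0}\hat{\tilde\phi}_\varepsilon\rangle$, pairs $V_{\tilde f}f$ against $e^{-i\langle\xi^0,x\rangle}V_{\phi_\varepsilon}(M_{2\xi^0}\psi_\varepsilon)$ (whose $\xi$-support sits in $Q_{\varepsilon/2}(2\xi^0)$), and invokes the STFT orthogonality relations plus H\"older in $L^{p,q}\times L^{p',q'}$ to transfer the factor $(2|\xi^0|_\infty-\varepsilon)^N$. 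You instead exhibit an actual point of nonvanishing: choosing $\eta\in\supp\hat f$ with $\hat f(\eta)\neq0$ and $|\eta|_\infty>R_{\hat f}-\delta/2$ (possible since $\{\hat f\neq0\}$ is dense in its closure), you identify the slice $\Phi=V_{\tilde f}f(\cdot,2\eta)$ with the convolution $(M_{-2\eta}f)\ast\overline f$, compute $\hat\Phi(-\eta)=|\hat f(\eta)|^2\neq0$, and then bound the $L^{p,q}$-norm from below by restricting to a small box around a point where $|V_{\tilde f}f|\geq c>0$ and $|\xi|_\infty>2R_{\hat f}-\delta$, using only $\omega\geq0$. I checked the computation: with $\tilde f(x)=f(-x)$ one indeed has $V_{\tilde f}f(x,2\eta)=\int f(y)e^{-2i\langle y,\eta\rangle}\overline{f(x-y)}\,dy$ and $\widehat{\overline f}(\zeta)=\overline{\hat f(-\zeta)}$, so the autocorrelation identity is right, and the restriction-to-a-box step is sound for all $p,q$ including $\infty$. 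Your version is more elementary and self-contained (no orthogonality relations, no construction of $\Sch_\omega$-bumps with prescribed compact Fourier support), and it isolates cleanly why the window $\tilde f$ defeats cancellation; the paper's duality argument is stylistically uniform with the proof of Proposition~\ref{th3A1} and is the kind of argument that survives when one only controls distributional pairings rather than pointwise values. Both yield the same conclusion.
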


\begin{proof}
By Proposition~\ref{prop1} we have
\beqs
\label{317}
\limsup_{N\to+\infty}\left\|e^{\lambda\omega\left(\frac{x}{N+1}\right)+\mu\omega(\xi)}
|\xi|_\infty^N V_{\tilde f} f(x,\xi)\right\|_{L^{p,q}}^{1/N}
\leq 2R_{\hat f},
\eeqs
since $\hat{\tilde{f}}(\xi)=\hat{f}(-\xi)$ and hence $R_{\hat{\tilde{f}}}=R_{\hat f}\,$.

Now, we fix $\xi^0\in\supp\hat f$ and $0<\varepsilon<2|\xi^0|_\infty$,  choose $\phi_\varepsilon,\psi_\varepsilon\in\Sch_\omega(\R^d)$ with
$\supp\hat\phi_\varepsilon,\hat\psi_\varepsilon\subseteq Q_{\varepsilon/4}$ and
\beqs
\label{314}
\langle\hat f,T_{\xi^0}\hat\psi_\varepsilon\rangle\neq0,\quad
\langle\hat f,T_{\xi^0}\hat{\tilde{\phi}}_\varepsilon\rangle\neq0.
\eeqs

Note that, by \cite[formula (3.10)]{G},
\beqsn
\Pi_\xi\supp\left(V_{\phi_\varepsilon}(M_{2\xi^0}\psi_\varepsilon)(x,\xi)\right)
=&&\Pi_\xi\supp\left(e^{-i\langle x,\xi\rangle}V_{\hat{\phi}_\varepsilon}
(\widehat{M_{2\xi^0}\psi_\varepsilon})(\xi,-x)\right)\\
=&&\Pi_\xi\supp\left(V_{\hat{\phi}_\varepsilon}
(T_{2\xi^0}\hat{\psi}_\varepsilon)(\xi,-x)\right)\\
=&&\Pi_\xi\supp\left(\bigl(T_{2\xi^0}\hat{\psi}_\varepsilon*
M_{-x}\widehat{\overline{\phi}}_\varepsilon\bigr)(\xi)\right)\\
\subseteq&&
Q_{\frac\varepsilon2}(2\xi^0):=
\{\xi\in\R^d:\ |\xi-2\xi^0|_\infty\leq\varepsilon/2\}.
\eeqsn

Then, for $\xi\in\Pi_\xi\supp\left(V_{\phi_\varepsilon}(M_{2\xi^0}\psi_\varepsilon)\right)$
we have $|\xi|_\infty\geq2|\xi^0|_\infty-\varepsilon/2$. Hence, for all
$\xi\in\R^d\setminus\{0\}$:
\beqs
\nonumber
\left\|e^{-\lambda\omega\left(\frac{x}{N+1}\right)-\mu\omega(\xi)}
|\xi|_\infty^{-N} V_{\phi_\varepsilon}(M_{2\xi^0}\psi_\varepsilon)(x,\xi)\right\|_{L^{p',q'}}
\leq&&\left(2|\xi^0|_\infty-\frac\varepsilon2\right)^{-N}\left\|
V_{\phi_\varepsilon}(M_{2\xi^0}\psi_\varepsilon)\right\|_{L^{p',q'}}\\
\label{313}
\leq&& C_{\varepsilon,\xi^0}\left(2|\xi^0|_\infty-\frac\varepsilon2\right)^{-N}
\eeqs
for some $C_{\varepsilon,\xi^0}>0$, since
$V_{\phi_\varepsilon}(M_{2\xi^0}\psi_\varepsilon)\in\Sch_\omega(\R^{2d})
\subseteq L^{p',q'}(\R^{2d})$,
where we have denoted by $p',q'$ the conjugate exponents of $p$ and $q$
respectively.

On the other hand, for $\xi\in\R^d\setminus\{0\}$, by \eqref{313}:
\beqsn
\lefteqn{(2|\xi^0|_\infty-\varepsilon)^N|\langle V_{\tilde f}f(x,\xi),e^{-i\langle\xi^0,x\rangle}
V_{\phi_\varepsilon}(M_{2\xi^0}\psi_\varepsilon)(x,\xi)\rangle|}\\
&&\le(2|\xi^0|_\infty-\varepsilon)^N\left|\langle
e^{\lambda\omega\left(\frac{x}{N+1}\right)+\mu\omega(\xi)}
|\xi|_\infty^N V_{\tilde f} f(x,\xi),e^{-\lambda\omega\left(\frac{x}{N+1}\right)-\mu\omega(\xi)}
|\xi|_\infty^{-N} e^{-i\langle\xi^0,x\rangle}
V_{\phi_\varepsilon}(M_{2\xi^0}\psi_\varepsilon)(x,\xi)\rangle\right|\\
&&\le (2|\xi^0|_\infty-\varepsilon)^N\left\|
e^{\lambda\omega\left(\frac{x}{N+1}\right)+\mu\omega(\xi)}
|\xi|_\infty^N V_{\tilde f} f\right\|_{L^{p,q}}\cdot
\left\|e^{-\lambda\omega\left(\frac{x}{N+1}\right)-\mu\omega(\xi)}
|\xi|_\infty^{-N}V_{\phi_\varepsilon}(M_{2\xi^0}\psi_\varepsilon)\right\|_{L^{p',q'}}\\
&&\le C_{\varepsilon,\xi^0}
\frac{(2|\xi^0|_\infty-\varepsilon)^N}{(2|\xi^0|_\infty-\varepsilon/2)^N}
\left\|
e^{\lambda\omega\left(\frac{x}{N+1}\right)+\mu\omega(\xi)}
|\xi|_\infty^N V_{\tilde f} f\right\|_{L^{p,q}}\le C_{\varepsilon,\xi^0}
\left\|
e^{\lambda\omega\left(\frac{x}{N+1}\right)+\mu\omega(\xi)}
|\xi|_\infty^N V_{\tilde f} f\right\|_{L^{p,q}}.
\eeqsn
Then
\beqs
\notag
(2|\xi^0|_\infty-\varepsilon)\left|\langle V_{\tilde f}f(x,\xi),e^{-i\langle\xi^0,x\rangle}
V_{\phi_\varepsilon}(M_{2\xi^0}\psi_\varepsilon)(x,\xi)\rangle\right|^{\frac1N}
\leq \\
\label{315}
\leq C_{\varepsilon,\xi^0}^{\frac1N}
\left\|
e^{\lambda\omega\left(\frac{x}{N+1}\right)+\mu\omega(\xi)}
|\xi|_\infty^N V_{\tilde f} f\right\|_{L^{p,q}}^{\frac1N}.
\eeqs

Let us now remark that
\beqsn
V_{\phi_\varepsilon}(M_{2\xi^0}\psi_\varepsilon)(x,\xi)=&&
\int_{\R^d}e^{i\langle 2\xi^0,y\rangle}\psi_\varepsilon(y)\overline{\phi_\varepsilon(y-x)}
e^{-i\langle y,\xi\rangle}dy\\
=&&e^{i\langle\xi^0,x\rangle}\int_{\R^d}e^{i\langle\xi^0,y\rangle}\psi_\varepsilon(y)
\overline{e^{-i\langle\xi^0,y-x\rangle}\phi_\varepsilon(y-x)}e^{-i\langle y,\xi\rangle}dy\\
=&&e^{i\langle\xi^0,x\rangle}V_{M_{-\xi^0}\phi_\varepsilon}
(M_{\xi^0}\psi_\varepsilon)(x,\xi)
\eeqsn
and therefore, from \cite[Thm. 3.2.1]{G}, by \eqref{314}:
\beqsn
\lefteqn{\langle V_{\tilde f}f(x,\xi),e^{-i\langle\xi^0,x\rangle}V_{\phi_\varepsilon}(M_{2\xi^0}\psi_\varepsilon)(x,\xi)
\rangle=
\langle V_{\tilde f}f, V_{M_{-\xi^0}\phi_\varepsilon}(M_{\xi^0}\psi_\varepsilon)\rangle}\\
&&=\langle f,M_{\xi^0}\psi_\varepsilon\rangle\cdot\overline{\langle\tilde{f},
M_{-\xi^0}\phi_\varepsilon\rangle}
=\langle \widehat{f},\widehat{M_{\xi^0}\psi_\varepsilon}\rangle\cdot\overline{\langle
\widehat{\tilde{f}},\widehat{M_{-\xi^0}\phi_\varepsilon}\rangle}\\
&&=\langle \widehat{f},T_{\xi^0}\widehat{\psi}_\varepsilon\rangle\cdot\overline{\langle
\widehat{\tilde{f}},T_{-\xi^0}\widehat{\phi}_\varepsilon\rangle}
=\langle \widehat{f},T_{\xi^0}\widehat{\psi}_\varepsilon\rangle\cdot\overline{\langle
\widehat{{f}},T_{\xi^0}\widehat{\tilde{\phi}}_\varepsilon\rangle}
\neq0.
\eeqsn
Therefore
\beqsn
\lim_{N\to+\infty}\left|\langle V_{\tilde f}f(x,\xi),e^{-i\langle\xi^0,x\rangle}
V_{\phi_\varepsilon}(M_{2\xi^0}\psi_\varepsilon)(x,\xi)\rangle\right|^{1/N}=1
\eeqsn
and from \eqref{315} we obtain that
\beqs
\label{316}
(2|\xi^0|_\infty-\varepsilon)\leq
\liminf_{N\to+\infty}\left\|
e^{\lambda\omega\left(\frac{x}{N+1}\right)+\mu\omega(\xi)}
|\xi|_\infty^N V_{\tilde f} f\right\|_{L^{p,q}}^{1/N}.
\eeqs
By the arbitrariness of $0<\varepsilon<2|\xi^0|_\infty$ and of
$\xi^0\in\supp\hat f$, from \eqref{316} and \eqref{317}, we finally 
obtain \eqref{318}.
\end{proof}

\begin{Cor}
\label{corWig1}
Let $f\in\Sch_\omega(\R^d)$ and $p,q\in[1,+\infty]$. Then, for all $\lambda,\mu\geq0$:
\beqs
\label{320}
\lim_{N\to+\infty}\left\|
e^{\lambda\omega\left(\frac{x}{N+1}\right)+\mu\omega(\xi)}
|\xi|_\infty^N \Wig f(x,\xi)\right\|_{L^{p,q}}^{1/N}=
R_{\hat f}.
\eeqs
\end{Cor}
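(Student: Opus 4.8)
The plan is to reduce everything to Proposition~\ref{prop2} through the elementary link between the Wigner and the short-time Fourier transforms. First I would record the identity
\[
\Wig f(x,\xi)=2^d e^{2i\langle\xi,x\rangle}V_{\tilde f}f(2x,2\xi),\qquad x,\xi\in\R^d,
\]
which follows from the substitution $y=x+t/2$ in the definition of $\Wig f$ together with $\tilde f(y)=f(-y)$ (cf.\ \cite{G}). Since the modulation factor has modulus one, it controls the integrand pointwise: $|\Wig f(x,\xi)|=2^d|V_{\tilde f}f(2x,2\xi)|$.

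Next I would pass to the argument of $V_{\tilde f}f$ via the dilation $x'=2x$, $\xi'=2\xi$. Writing the Wigner quantity $W_N(x,\xi):=e^{\lambda\omega\left(\frac{x}{N+1}\right)+\mu\omega(\xi)}|\xi|_\infty^N\Wig f(x,\xi)$ in the new variables produces a dimensional constant (from the $2^d$ above together with the Jacobian of the $L^{p,q}$-norm under dilation) and a factor $2^{-N}$ coming from $|\xi|_\infty^N=(|\xi'|_\infty/2)^N$; explicitly,
\[
\|W_N\|_{L^{p,q}}=2^{\,d-d/p-d/q}\,2^{-N}\Big\|e^{\lambda\omega\left(\frac{x'}{2(N+1)}\right)+\mu\omega(\xi'/2)}|\xi'|_\infty^N V_{\tilde f}f(x',\xi')\Big\|_{L^{p,q}}.
\]
Taking $N$-th roots, the dimensional constant tends to $1$ while the factor $2^{-N}$ contributes exactly the $1/2$ that converts the value $2R_{\hat f}$ of Proposition~\ref{prop2} into the desired $R_{\hat f}$.

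The hard part will be that the rescaled weights $\omega(x'/(2(N+1)))$ and $\omega(\xi'/2)$ do not literally coincide with the weights $\omega(x'/(N+1))$ and $\omega(\xi')$ in Proposition~\ref{prop2}. I would dispose of this by a squeeze that crucially exploits the fact that Proposition~\ref{prop2} yields the \emph{same} limit $2R_{\hat f}$ for every $\lambda,\mu\ge 0$. For the upper bound, monotonicity of $\omega$ gives $\omega(x'/(2(N+1)))\le\omega(x'/(N+1))$ and $\omega(\xi'/2)\le\omega(\xi')$, so the rescaled norm is dominated by the one in Proposition~\ref{prop2}, whose $N$-th root tends to $2R_{\hat f}$. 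For the lower bound, since $\omega\ge 0$ and $\lambda,\mu\ge 0$ the exponential weight is $\ge 1$, so the rescaled norm dominates $\||\xi'|_\infty^N V_{\tilde f}f\|_{L^{p,q}}$, whose $N$-th root also tends to $2R_{\hat f}$ by Proposition~\ref{prop2} applied with $\lambda=\mu=0$. Hence the rescaled norm has $N$-th root converging to $2R_{\hat f}$, and combining with the previous display gives
\[
\lim_{N\to+\infty}\|W_N\|_{L^{p,q}}^{1/N}=\tfrac12\cdot 2R_{\hat f}=R_{\hat f},
\]
including the case $R_{\hat f}=+\infty$. Everything beyond this weight bookkeeping is routine: for $p=\infty$ or $q=\infty$ one replaces the corresponding integral by an essential supremum throughout, exactly as in the proof of Proposition~\ref{prop2}.
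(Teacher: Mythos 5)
Your argument is correct and follows essentially the same route as the paper: the identity $\Wig f(x,\xi)=2^d e^{2i\langle x,\xi\rangle}V_{\tilde f}f(2x,2\xi)$, the dilation producing the constant $2^{d-d/p-d/q}$ and the factor $2^{-N}$, and a squeeze that reduces everything to Proposition~\ref{prop2}. The only (harmless) difference is in the lower half of the squeeze: the paper uses condition $(\alpha)$ to get $\omega(t/2)\ge\frac{1}{L}\omega(t)-1$ and then invokes Proposition~\ref{prop2} with the rescaled parameters $\lambda/L,\mu/L$, whereas you simply discard the weight (which is $\ge 1$) and invoke the case $\lambda=\mu=0$; both variants work precisely because the limit $2R_{\hat f}$ in Proposition~\ref{prop2} does not depend on $\lambda$ and $\mu$.
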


\begin{proof}
By \cite[Lemma 4.3.1]{G}, if $p,q\in[1,+\infty)$:
\beqs
\notag
\lefteqn{\left\|e^{\lambda\omega\left(\frac{x}{N+1}\right)+\mu\omega(\xi)}
|\xi|_\infty^N \Wig f(x,\xi)\right\|_{L^{p,q}}}\\
\notag
&&=\left\|e^{\lambda\omega\left(\frac{x}{N+1}\right)+\mu\omega(\xi)}
|\xi|_\infty^N 2^d e^{2i\langle x,\xi\rangle} V_{\tilde f}f(2x,2\xi)\right\|_{L^{p,q}}\\
\label{320a}
&&=2^d 2^{-\frac dp}2^{-\frac dq}
\left\|e^{\lambda\omega\left(\frac{y}{2(N+1)}\right)+\mu\omega\left(\frac{\eta}{2}\right)}
\left|\frac\eta2\right|_\infty^N  V_{\tilde f}f(y,\eta)\right\|_{L^{p,q}}.
\eeqs
Using the fact that $\omega$ is increasing and satisfies condition $(\alpha)$
of Definition~\ref{defomega} we have
\beqsn
\frac{1}{L}\omega(t)-1\leq\omega\left(\frac{t}{2}\right)\leq \omega(t),
\eeqsn
and so, by \eqref{320a},
\beqsn
&&2^{\frac{d}{N}(1-\frac{1}{p}-\frac{1}{q})}\frac{1}{2} e^{-\lambda/N-\mu/N}
\left\|e^{\frac{\lambda}{L}\omega\left(\frac{y}{N+1}\right)+\frac{\mu}{L}\omega\left(\eta\right)}
\left|\eta\right|_\infty^N V_{\tilde f}f(y,\eta)\right\|_{L^{p,q}}^{1/N}\leq
\left\|e^{\lambda\omega\left(\frac{x}{N+1}\right)+\mu\omega(\xi)}
|\xi|_\infty^N \Wig f(x,\xi)\right\|_{L^{p,q}}^{1/N}\\
&&\qquad\qquad\leq 2^{\frac{d}{N}(1-\frac{1}{p}-\frac{1}{q})}\frac{1}{2}
\left\|e^{\lambda\omega\left(\frac{y}{N+1}\right)+\mu\omega\left(\eta\right)}
\left|\eta\right|_\infty^N  V_{\tilde f}f(y,\eta)\right\|_{L^{p,q}}^{1/N}.
\eeqsn

Consequently, from Proposition~\ref{prop2}, we deduce
\beqsn
\lim_{N\to+\infty}\left\|
e^{\lambda\omega\left(\frac{x}{N+1}\right)+\mu\omega(\xi)}
|\xi|_\infty^N \Wig f(x,\xi)\right\|_{L^{p,q}}^{1/N}
=\frac12\cdot2R_{\hat f}=R_{\hat f}\,,
\eeqsn
for $1\leq p,q<\infty$.
If $p$ and/or $q$ is $\infty$ the proof is similar.
\end{proof}

\begin{Cor}
\label{corWig2}
Let $f\in\Sch_\omega(\R^d)$ and $p,q\in[1,+\infty]$. Then
\beqs
\label{321}
&&\lim_{N\to+\infty}\left\||\xi|_\infty^N\Wig f(x,\xi)\right\|_{L^{p,q}}^{1/N}=R_{\hat f}\\
\label{322}
&&\lim_{N\to+\infty}\left\||x|_\infty^N\Wig f(x,\xi)\right\|_{L^{p,q}}^{1/N}=R_f.
\eeqs
\end{Cor}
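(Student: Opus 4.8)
The plan is to derive both identities directly from Corollary~\ref{corWig1} by choosing the parameters $\lambda=\mu=0$ and by exploiting the symmetry of the Wigner transform under the Fourier transform. For the first identity \eqref{321}, I would simply set $\lambda=\mu=0$ in \eqref{320}: since the weight factors $e^{\lambda\omega(x/(N+1))+\mu\omega(\xi)}$ reduce to $1$, Corollary~\ref{corWig1} immediately gives
\beqsn
\lim_{N\to+\infty}\left\||\xi|_\infty^N\Wig f(x,\xi)\right\|_{L^{p,q}}^{1/N}=R_{\hat f}.
\eeqsn
This requires no new estimates; it is a specialization of the already-proved result.

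For the second identity \eqref{322}, the plan is to pass from the $\xi$-variable to the $x$-variable using a symmetry property of the Wigner transform. The key structural fact (see \cite{G}) is that the partial Fourier transform interchanges the two arguments of $\Wig f$: concretely, $\Wig f(x,\xi)$ and $\Wig \hat f(\xi,-x)$ are related, so that growth in $|x|_\infty^N$ for $\Wig f$ corresponds to growth in $|\eta|_\infty^N$ for $\Wig\hat f$. Applying \eqref{321} to the function $\hat f$ in place of $f$ then yields
\beqsn
\lim_{N\to+\infty}\left\||x|_\infty^N\Wig f(x,\xi)\right\|_{L^{p,q}}^{1/N}
=R_{\widehat{\hat f}}=R_f,
\eeqsn
where the last equality uses the Fourier inversion formula $\widehat{\hat f}(x)=(2\pi)^d f(-x)$, whence $R_{\widehat{\hat f}}=R_f$. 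I would first record the precise symmetry relation between $\Wig f$ and $\Wig\hat f$, check that the change of variables $(x,\xi)\mapsto(\xi,-x)$ preserves the $L^{p,q}$-norm up to a harmless constant factor (which disappears after taking $N$-th roots and the limit), and then substitute.

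The main obstacle I anticipate is bookkeeping rather than conceptual: one must verify that the chosen symmetry identity is the correct one and that the Jacobian and sign changes in the substitution $(x,\xi)\mapsto(\xi,-x)$ do not disturb the $L^{p,q}$-norm in an asymmetric way (note that $L^{p,q}$ treats the two variables differently when $p\neq q$). Since the roles of $x$ and $\xi$ swap, a cleaner route may be to invoke \eqref{321} with the pair $(q,p)$ in place of $(p,q)$, because the partial Fourier transform that realizes the symmetry turns an $L^{p,q}$-estimate into an $L^{q,p}$-estimate. Either way, because \eqref{321} holds for \emph{all} $p,q\in[1,+\infty]$, the exponent swap causes no loss, and the constant factors are absorbed in the limit of $N$-th roots.
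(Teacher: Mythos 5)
Your proposal is correct and follows essentially the same route as the paper: \eqref{321} is obtained by taking $\lambda=\mu=0$ in \eqref{320}, and \eqref{322} by the symmetry $\Wig f(x,\xi)=\Wig\hat f(\xi,-x)$ (\cite[Prop. 4.3.2]{G}) together with \eqref{321} applied to $\hat f$ and $R_{\hat{\hat f}}=R_f$. The bookkeeping issue you flag about the order of integration in the mixed $L^{p,q}$-norm after swapping the variables is real but harmless (the paper glosses over it too); it is resolved by noting that the upper and lower bounds in Propositions~\ref{prop1} and \ref{prop2} are insensitive to which variable is integrated first, so \eqref{321} holds for the transposed mixed norm as well.
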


\begin{proof}
Formula \eqref{321} follows from \eqref{320} with $\lambda=\mu=0$.

Formula \eqref{322} follows from \cite[Prop. 4.3.2]{G} and \eqref{321}
applied to $\hat f$:
\beqsn
\lim_{N\to+\infty}\left\||x|_\infty^N\Wig f(x,\xi)\right\|_{L^{p,q}}^{1/N}=&&
\lim_{N\to+\infty}\left\||x|_\infty^N\Wig \hat{f}(\xi,-x)\right\|_{L^{p,q}}^{1/N}\\
=&&\lim_{N\to+\infty}\left\||x|_\infty^N\Wig \hat{f}(\xi,x)\right\|_{L^{p,q}}^{1/N}
=R_{\hat{\hat{f}}}=R_f.
\eeqsn
\end{proof}

If we consider formula \eqref{321} for $p=q=2$ in the one-dimensional case,  the multiplication by $|\xi|^N$ cannot be replaced by the derivatives $D_x^N$
of the Wigner transform of a real valued function $f\in\Sch_\omega(\R)$.
Indeed, if we denote by
\beqsn
Af(x,\xi):=\int_{\R}f\left(t+\frac x2\right)
\overline{f\left(t-\frac x2\right)}e^{-i\langle t,\xi\rangle}dt
\eeqsn
the {\em ambiguity function} $Af$ of $f$, by \cite[Lemma 4.3.4]{G}, we obtain
\beqs
\nonumber
\|D_x^N\Wig f(x,\xi)\|_{L^2}=&&\|\F\big(D_x^N\Wig f(x,\xi)\big)\|_{L^2}\\
\label{340}
=&&\|y^N\widehat{\Wig f}(y,\eta)\|_{L^2}
=\|y^NAf(-\eta,y)\|_{L^2}.
\eeqs
Now, since $f$ is real valued by assumption,
\beqsn
Af(-\eta,y)=&&\int_{\R}f\left(t-\frac\eta2\right)\overline{f\left(t+\frac\eta2\right)}
e^{-i\langle t,y\rangle}dt\\
=&&\int_{\R}f(u-\eta){f(u)}e^{-i\langle u-\frac\eta2,y\rangle}du
=e^{\frac i2\langle\eta,y\rangle} V_{{f}}{f}(\eta,y).
\eeqsn
Hence, by \eqref{340},
\beqsn
\limsup_{N\to +\infty}\|D_x^N\Wig f(x,\xi)\|_{L^2}^{1/N}=
\limsup_{N\to +\infty}\|y^N V_f f(\eta,y)\|_{L^2}^{1/N},
\eeqsn
which can be strictly smaller than $R_{\hat f}$, by Example~\ref{remstrict}.

On the other hand, if $f=\tilde f$, by Proposition~\ref{prop2},
\beqsn
\lim_{N\to +\infty}\|D_x^N\Wig f(x,\xi)\|_{L^2}^{1/N}=
\lim_{N\to +\infty}\|y^N V_{\tilde f} f(\eta,y)\|_{L^2}^{1/N}=2R_{\hat f}>R_{\hat f}.
\eeqsn

\begin{Lemma}
\label{lemWig10}
Let $p,q\in[1,+\infty]$ and $f\in\Sch^\prime_\omega(\R^d)$ such that
\beqsn
e^{\lambda\omega(x)+\mu\omega(\xi)} \Wig f(x,\xi)\in L^{p,q}(\R^{2d})
\eeqsn
for all $\lambda,\mu>0$. Then $f\in\Sch_\omega(\R^d)$.
\end{Lemma}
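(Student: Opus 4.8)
The plan is to convert the hypothesis on $\Wig f$, which is quadratic in $f$ and uses $f$ itself as a ``window'', into a statement about the short-time Fourier transform $V_\psi f$ with respect to a \emph{fixed} window $\psi\in\Sch_\omega(\R^d)\setminus\{0\}$, and then to invoke the characterization of $\Sch_\omega$ through condition $(h)'$ of Theorem~\ref{propSomega}. First I would fix any $\psi\in\Sch_\omega(\R^d)\setminus\{0\}$ (for instance a Gaussian, which lies in $\Sch_\omega$ for every weight $\omega$); then $\Wig\psi\in\Sch_\omega(\R^{2d})$. The key identity, obtained from Moyal's formula together with the covariance of the Wigner transform under time-frequency shifts (see \cite{G}), is
\beqsn
|V_\psi f(z)|^2=c_d\,\big(\Wig f * g\big)(z),\qquad z\in\R^{2d},
\eeqsn
where $g(w):=\overline{\Wig\psi(-w)}\in\Sch_\omega(\R^{2d})$ and $c_d>0$. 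Indeed, writing $\pi(z)\psi=M_\xi T_x\psi$ one has $V_\psi f(z)=\langle f,\pi(z)\psi\rangle$ and $\Wig(\pi(z)\psi)=T_z\Wig\psi$, so Moyal's identity $\langle\Wig f,\Wig(\pi(z)\psi)\rangle=c_d|\langle f,\pi(z)\psi\rangle|^2$ produces the convolution above. All pairings are legitimate for $f\in\Sch'_\omega(\R^d)$, since $\Wig f\in\Sch'_\omega(\R^{2d})$ and $\Wig\psi\in\Sch_\omega(\R^{2d})$, and the hypothesis promotes $\Wig f$ to a genuine function. The decisive gain is that the marginals of $\Wig f$ would only recover the decay of $|f|$ and of $|\hat f|$ separately, which is \emph{not} enough to force $f\in\Sch$; the convolution identity instead retains the full joint time-frequency information.

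Next I would run a weighted Young estimate in the mixed norm $L^{p,q}$. Splitting the weight by \eqref{add1}, for all $\lambda,\mu>0$,
\beqsn
e^{\lambda\omega(x)+\mu\omega(\xi)}\le e^{(\lambda+\mu)L}\,e^{\lambda L\omega(y)+\mu L\omega(\eta)}\,e^{\lambda L\omega(x-y)+\mu L\omega(\xi-\eta)},
\eeqsn
so $e^{\lambda\omega(x)+\mu\omega(\xi)}|V_\psi f|^2$ is dominated by the convolution of $e^{\lambda L\omega(y)+\mu L\omega(\eta)}|\Wig f(y,\eta)|$ with $e^{\lambda L\omega+\mu L\omega}|g|$. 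The first factor lies in $L^{p,q}(\R^{2d})$ by hypothesis (valid for every weight), and the second lies in $L^1(\R^{2d})$ because $g\in\Sch_\omega(\R^{2d})$ decays faster than any $e^{-\kappa\omega}$, cf. \eqref{add2}. Young's inequality $L^{p,q}*L^1\subseteq L^{p,q}$ then gives $e^{\lambda\omega(x)+\mu\omega(\xi)}|V_\psi f|^2\in L^{p,q}(\R^{2d})$ for all $\lambda,\mu>0$. Taking square roots (which turns $(p,q)$ into $(2p,2q)$ and halves the weights), and using $\omega(x,\xi)\le L(\omega(x)+\omega(\xi)+1)$ together with $\omega(x,\xi)\ge\max\{\omega(x),\omega(\xi)\}$, this reads
\beqsn
\big\|V_\psi f(z)\,e^{\lambda\omega(z)}\big\|_{L^{2p,2q}}<+\infty,\qquad\forall\lambda>0.
\eeqsn

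Finally, this is exactly condition $(h)'$ of Theorem~\ref{propSomega} for the window $\psi$ and exponents $2p,2q$ (admissible, since Theorem~\ref{propSomega} holds for all $p,q\in[1,+\infty]$), whence $f\in\Sch_\omega(\R^d)$. The step requiring care — and the main obstacle — is the passage from $f\in\Sch'_\omega(\R^d)$ to $f\in\Sch_\omega(\R^d)$: Theorem~\ref{propSomega} is stated for $f\in\Sch(\R^d)$, so one must verify that the implication $(h)'\Rightarrow f\in\Sch_\omega$ remains valid when one only assumes $f\in\Sch'_\omega$. This is precisely the content of the short-time Fourier inversion argument underlying $(h)'$ (as in the proof of Theorem~\ref{propSomega} via \cite{BJO2}): the weighted bounds on $V_\psi f$, holding for all weights with a fixed window, reconstruct the seminorms of $f$ and force condition $(e)$ of Theorem~\ref{thSomega}, i.e. $f$ is $\omega$-ultradifferentiable. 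Justifying the Moyal identity at the level of $\Sch'_\omega$ and carrying out this last upgrade are the only non-routine points; the weighted Young estimate is standard.
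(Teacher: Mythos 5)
Your proof is correct in outline but follows a genuinely different route from the paper. The paper works directly with $\Wig f$: it first shows $\Wig f\in L^1(\R^{2d})$ by H\"older, inverts the partial Fourier transform to recover $f\left(x+\frac t2\right)\overline{f\left(x-\frac t2\right)}$, and then tests against $\phi\left(x+\frac t2\right)\phi_0\left(x-\frac t2\right)$ with $\langle\overline f,\phi_0\rangle\neq0$ to ``divide out'' one factor of $f$ and obtain the explicit reconstruction formula \eqref{41}; weighted Minkowski/H\"older estimates on that formula (and on its analogue for $\hat f$ via $\Wig\hat f(x,\xi)=\Wig f(-\xi,x)$) then verify condition $(c)'$ of Theorem~\ref{propSomega}. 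You instead linearize by taking square roots at the norm level: the identity $|V_\psi f(z)|^2=c_d\,(\Wig f*g)(z)$ with $g(w)=\overline{\Wig\psi(-w)}$, a weighted Young inequality $L^{p,q}*L^1\subseteq L^{p,q}$, and condition $(h)'$. Both are sound, and you have correctly isolated the two non-routine points of your route: (i) the Moyal/covariance identity must be justified for $f\in\Sch'_\omega(\R^d)$ (this can be done via the definition of $\Wig f$ as a partial Fourier transform of $(f\otimes\overline f)\circ A$ for a linear change of variables $A$, plus Parseval, and the hypothesis indeed makes $\Wig f$ an $L^1$ function so the convolution is pointwise defined); and (ii) the implication $(h)'\Rightarrow f\in\Sch_\omega$ must be upgraded from the setting $f\in\Sch(\R^d)$ of Theorem~\ref{propSomega} to $f\in\Sch'_\omega(\R^d)$, which is exactly what the STFT inversion formula in \cite{GZ,BJO2} provides. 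What your approach buys is brevity and modularity — the Wigner hypothesis is converted in one stroke into an STFT hypothesis with a fixed window, a standard time-frequency device; what the paper's approach buys is self-containedness — it needs only Fourier inversion and H\"older/Minkowski, never invokes Moyal's formula for ultradistributions, and resolves the ``$f$ is a priori only a distribution'' issue explicitly by exhibiting $f$ as the $L^1$ function \eqref{41} before estimating it.
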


\begin{proof}

We observe that $\Wig f\in L^1(\R^{2d})$, since
\beqsn
\Vert \Wig f\Vert_{L^1}\leq \Vert e^{\lambda\omega(x)+\mu\omega(\xi)}\Wig f(x,\xi)\Vert_{L^{p.q}}\Vert e^{-\lambda\omega(x)-\mu\omega(\xi)}\Vert_{L^{p^\prime,q^\prime}}<\infty
\eeqsn
by hypothesis and \eqref{add2}, provided that $\lambda\geq (d+1)/bp'$ and $\mu\geq (d+1)/bq'$.
Then, by applying the inverse partial Fourier transform with respect to $\xi$ to $\Wig f(x,\xi)$ we get
\beqs
\label{4001}
f\left(x+\frac{t}{2}\right)\overline{f\left(x-\frac{t}{2}\right)} = (2\pi)^{-d}\int_{\R^d}
 \Wig f(x,\xi) e^{i\langle\xi,t\rangle}d\xi.
\eeqs
Then, the element $f\left(x+\frac{t}{2}\right)\overline{f\left(x-\frac{t}{2}\right)}$, that a priori belongs to $\Sch_\omega^\prime(\R^{2d}_{(t,x)})$, is in fact a function in $L^\infty(\R^d_t)$ for almost every $x\in\R^d$, and is in $L^1(\R^d_x)$ for every $t\in\R^d$. Now, suppose that $f\not\equiv 0$ (otherwise the result is trivial), and let $\phi_0\in\Sch_\omega(\R^d)$ such that $\langle \overline{f},\phi_0\rangle\neq 0$. For a function $\phi\in\Sch_\omega(\R^d)$, consider
\beqsn
\Phi(t,x) = \phi\left(x+\frac{t}{2}\right) \phi_0\left( x-\frac{t}{2}\right) \in\Sch_\omega(\R^{2d}),
\eeqsn
and apply the two distributions in \eqref{4001} to the test function $\Phi$; on the right-hand side we can write the application as an integral, and then we obtain
\beqsn
\langle f,\phi\rangle\langle \overline{f},\phi_0\rangle = (2\pi)^{-d} \int_{\R^{2d}}\left(\int_{\R^d} \Wig f(x,\xi) e^{i\langle\xi,t\rangle}d\xi\right) \phi\left(x+\frac{t}{2}\right) \phi_0\left( x-\frac{t}{2}\right) dx\,dt.
\eeqsn
Then by the change of variables $x+t/2 = y$, $x-t/2=s$ and by Fubini Theorem we obtain
\beqsn
\langle f,\phi\rangle = \frac{1}{(2\pi)^d \langle \overline{f},\phi_0\rangle} \int_{\R^d}
\left( \int_{\R^{2d}}\Wig f\left(\frac{y+s}{2},\xi\right) e^{i\langle\xi,y-s\rangle} \phi_0(s)\,d\xi\,ds\right) \phi(y)\,dy,
\eeqsn
and so we get that $f$ is a function in $L^1(\R^d)$ given by
\beqs
\label{41}
f(x) = \frac{1}{(2\pi)^d \langle \overline{f},\phi_0\rangle} \int_{\R^{2d}}\Wig f\left(\frac{x+s}{2},\xi\right) e^{i\langle\xi,x-s\rangle} \phi_0(s)\,d\xi\,ds.
\eeqs
In order to prove that $f\in\Sch_\omega(\R^d)$ we shall prove that $f$ satisfies condition $(c)'$ of Theorem~\ref{propSomega}. Suppose that $p<+\infty$. By \eqref{41} and Minkowski inequality, cf. for example \cite[6.19]{F}, we have
\beqsn
\Vert e^{\lambda\omega(x)} f(x)\Vert_{L^p} &\leq& \frac{1}{(2\pi)^d |\langle \overline{f},\phi_0\rangle|} \left(\int_{\R^d}\left(\int_{\R^{2d}} e^{\lambda\omega(x)} \left\vert \Wig f\left(\frac{x+s}{2},\xi\right) \right\vert \vert\phi_0(s)\vert d\xi ds\right)^p dx\right)^{1/p} \\
&\leq& \frac{1}{(2\pi)^d |\langle \overline{f},\phi_0\rangle|} \int_{\R^{2d}}\left(\int_{\R^d} \left(e^{\lambda\omega(x)} \left\vert \Wig f\left(\frac{x+s}{2},\xi\right) \right\vert \vert\phi_0(s)\vert \right)^p dx\right)^{1/p} d\xi ds.
\eeqsn
Writing $C_0 = ((2\pi)^d |\langle \overline{f},\phi_0\rangle|)^{-1}$, using H\"older inequality in the $\xi$-integral and \eqref{add1} we obtain, for $\mu\geq (d+1)/bq'$,
\beqs
\notag
\Vert e^{\lambda\omega(x)} f(x)\Vert_{L^p} &\leq& C_0 e^{\lambda L} \int_{\R^{2d}} e^{-\mu\omega(\xi)}e^{\mu\omega(\xi)}\\
\notag
&&\cdot
 \left(\int_{\R^d} \left( e^{\lambda L\omega(x+s)}\left\vert\Wig f\left(\frac{x+s}{2},\xi\right)\right\vert\right)^p dx\right)^{1/p} e^{\lambda L\omega(s)}\vert\phi_0(s)\vert d\xi ds \\
\notag
&\leq& C_0 2^{d/p} e^{\lambda (L^2+L)} \Vert e^{-\mu\omega(\xi)}\Vert_{L^{q^\prime}}\\
\notag
&&\cdot\left(
 \int_{\R^d} e^{\lambda L\omega(s)}\vert\phi_0(s)\vert ds\right) \left\Vert e^{\mu\omega(\xi)} \left( \int_{\R^d}\left( e^{\lambda L^2\omega(y)}\vert \Wig f(y,\xi)\vert \right)^p dy\right)^{1/p} \right\Vert_{L^q(\R^d_\xi)}\\
\label{42}
&=&C_\lambda\Vert e^{\lambda L^2\omega(y)+\mu\omega(\xi)}\Wig f(y,\xi)\Vert_{L^{p.q}} <\infty
\eeqs
by hypothesis and \eqref{add2}. In the case $p=+\infty$ the same proof works, with small modifications, so \eqref{42} holds for every $p$ and $q$.

Now, let $\phi_1\in\Sch_\omega(\R^d)$ be such that $\langle\overline{\hat f},\phi_1\rangle \neq 0$, and $q<+\infty$. We apply \eqref{41}  to $\hat f$ and use \cite[Prop. 4.3.2]{G} to get
\beqsn
\Vert e^{\lambda\omega(\xi)}\hat f(\xi)\Vert_{L^q}&\leq& C_1\left( \int_{\R^d} \left(e^{\lambda\omega(\xi)}\int_{\R^{2d}} \left\vert\Wig \hat{f}\left(\frac{\xi+s}{2},y\right) \right\vert \vert\phi_1(s)\vert dy ds\right)^q d\xi\right)^{1/q}\\
&=&C_1\left( \int_{\R^d} \left(e^{\lambda\omega(\xi)}\int_{\R^{2d}} \left\vert\Wig f\left(-y,\frac{\xi+s}{2}\right) \right\vert \vert\phi_1(s)\vert dy ds\right)^q d\xi\right)^{1/q},
\eeqsn
where $C_1 = ((2\pi)^{d}|\langle\overline{\hat f},\phi_1\rangle|)^{-1}$. We apply the change of variables $(\xi+s)/2=\eta$ in the $\xi$-integral, $-y=x$, use condition $(\alpha)$ of Definition~\ref{defomega} and  \eqref{add1} and  H\"older's inequality in the $x$-integral to obtain, for $\mu\geq (d+1)/bp'$,
\beqs
\notag
\Vert e^{\lambda\omega(\xi)}\hat f(\xi)\Vert_{L^q}&\leq& C_1 2^{\frac dq} e^{\lambda(L^2+L)} \int_{\R^d} e^{\lambda L\omega(s)}\vert\phi_1(s)\vert ds \\
\notag
&&\cdot\left( \int_{\R^d}\left( e^{\lambda L^2\omega(\eta)}\int_{\R^d} e^{-\mu\omega(x)}e^{\mu\omega(x)} \vert\Wig f(x,\eta)\vert dx\right)^q d\eta\right)^{1/q} \\
\notag
&\leq&C_\lambda\left(\int_{\R^d}\left( \int_{\R^d} \left( e^{\lambda L^2\omega(\eta)}e^{\mu\omega(x)}\vert\Wig f(x,\eta)\vert\right)^p dx\right)^{q/p} d\eta\right)^{1/q}\\
\label{43}
&=&C_\lambda \Vert e^{\mu\omega(x)+\lambda L^2\omega(\eta)}\Wig f(x,\eta)\Vert_{L^{p,q}}<\infty,
\eeqs
by hypothesis, where $C_\lambda=C_1 2^{d/q} e^{\lambda(L^2+L)} \int e^{\lambda L\omega(s)}\vert\phi_1(s)\vert ds\Vert e^{-\mu\omega(x)}\Vert_{L^{p^\prime}}<+\infty$ by \eqref{add2}. If $q=\infty$ the same proof works, with small modifications, so \eqref{43} holds for every $p$ and $q$. By \eqref{42} and \eqref{43} the function $f$
satisfies Theorem~\ref{propSomega}\ $(c)'$.
\end{proof}


We can now prove the following theorem that, besides the classical result in ultradifferentiable classes (see \cite{Bj,BMT,Fie}), contains real ultradifferentiable Paley-Wiener theorems in the spirit of \cite{A2} and a new equivalent condition on the Wigner transform. Given $R>0$, for the compact set $Q_{R}$, as defined in \eqref{qr}, we denote its supporting function $H_{Q_{R}}(x):=\sup_{y\in Q_{R}}\langle x,y\rangle$ simply by $H_{R}(x)$ for all $x\in\R^{d}.$

\begin{Th}
\label{th4A2}
Let $1\leq p,q\leq +\infty$ and $R>0$.
Then the following conditions are equivalent:
\begin{itemize}
\item[(a)]
$f$ is an entire function in $\C^{d}$ and for all $k\in\N_0$ there exists $C_k>0$ such that
\beqsn
|f(z)|\leq C_k e^{H_{R}(\Im z)-k\omega(z)},\qquad z\in\C^{d}.
\eeqsn
\item[(b)]
$f\in\PW_R^\omega(\R^{d})$.
\item[(c)]
$f\in C^\infty(\R^{d})$,
$e^{\lambda\omega(x)}f^{(\alpha)}(x)\in L^p(\R^{d})$ for all $\alpha\in\N_0^{d}$ and
$\lambda\geq0$ and
\beqs
\label{duestelle}
\lim_{n\to+\infty}\left(\max_{|\alpha|=n}\left\|e^{\lambda\omega\left(\frac{x}{n+1}\right)}f^{(\alpha)}(x)\right\|_{L^p}\right)^{1/n}
=R_{\hat{f}}\le R.
\eeqs
\item[(d)]
$f\in\Sch_\omega(\R^{d})$ and
$
\supp\hat{f}\subseteq Q_R.
$
\item[(e)]
$f\in\Sch'_\omega(\R^{d})$,
$e^{\lambda\omega(x)+\mu\omega(\xi)} \Wig f(x,\xi)\in L^{p,q}(\R^{2d})$
for all $\lambda,\mu\geq0$ and
\beqs
\label{trestelle}
\lim_{n\to +\infty}\left\Vert e^{\lambda\omega\left(\frac{x}{n+1}\right)+\mu\omega(\xi)} \vert\xi\vert^n_{\infty}\Wig f(x,\xi)\right\Vert_{L^{p,q}}^{1/n}=R_{\hat{f}}\le R.
\eeqs
\end{itemize}
\end{Th}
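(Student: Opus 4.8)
The plan is to take condition (d), $f\in\Sch_\omega(\R^d)$ with $\supp\hat f\subseteq Q_R$, as the hub and prove that each of (a), (b), (c) and (e) is equivalent to it; all four equivalences reduce to results already established in Section~\ref{sec3}. Throughout I will use repeatedly the elementary observation that, since $R_{\hat f}=\sup\{|x|_\infty:x\in\supp\hat f\}$, the inequality $R_{\hat f}\le R$ is literally the statement $\supp\hat f\subseteq Q_R$, with $R_{\hat f}<\infty$ forcing $\supp\hat f$ to be compact.

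For (a)$\Leftrightarrow$(d) I would simply quote the classical complex Paley--Wiener theorem in the Beurling setting, \cite[Proposition 3.4(2)]{BMT}: an entire $f$ on $\C^d$ satisfies $|f(z)|\le C_k e^{H_R(\Im z)-k\omega(z)}$ for every $k$ if and only if its restriction to $\R^d$ has $\hat f\in\D_{(\omega)}(\R^d)$ with $\supp\hat f\subseteq Q_R$, the supporting function of the symmetric set $Q_R$ being exactly $H_R$. For (b)$\Leftrightarrow$(d) I would invoke Theorem~\ref{th1A1}, which states $f\in\PW_R^\omega(\R^d)$ iff $\hat f\in\D_{(\omega)}(\R^d)$ and $\supp\hat f\subseteq Q_R$; this coincides with (d) once one notes that $\F$ is an automorphism of $\Sch_\omega(\R^d)$, so an $\hat f$ with compact support in $Q_R$ lies in $\D_{(\omega)}$ precisely when $f\in\Sch_\omega$.

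The equivalence (c)$\Leftrightarrow$(d) is where the real Paley--Wiener machinery enters. For (d)$\Rightarrow$(c), the membership $f\in\Sch_\omega(\R^d)$ gives $e^{\lambda\omega(x)}f^{(\alpha)}\in L^p(\R^d)$ for every $\lambda\ge0$ and $\alpha$ by Theorem~\ref{propSomega}$(a)'$, and then Proposition~\ref{th3A1} together with Remark~\ref{add6} identifies the limit in \eqref{duestelle} with $R_{\hat f}$; the role of Remark~\ref{add6} is that the all-$\lambda$ integrability lets us bypass the auxiliary hypotheses \eqref{add4} or \eqref{bmm}, while $\supp\hat f\subseteq Q_R$ yields $R_{\hat f}\le R$. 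Conversely, (c) provides $e^{\lambda\omega(x)}f^{(\alpha)}\in L^p$ for all $\lambda$ and, through the limit being $R_{\hat f}\le R<\infty$, the compactness of $\supp\hat f$; Lemma~\ref{lemma312} then yields $f\in\Sch_\omega(\R^d)$, and $R_{\hat f}\le R$ again gives $\supp\hat f\subseteq Q_R$.

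Finally, (e)$\Leftrightarrow$(d) runs through the Wigner transform. For (d)$\Rightarrow$(e) one uses that $f\in\Sch_\omega(\R^d)$ implies $\Wig f\in\Sch_\omega(\R^{2d})$, whence $e^{\lambda\omega(x)+\mu\omega(\xi)}\Wig f\in L^{p,q}$ for all $\lambda,\mu\ge0$ (estimating $\lambda\omega(x)+\mu\omega(\xi)\le(\lambda+\mu)\omega(x,\xi)$ and absorbing an extra factor $e^{-\sigma\omega}$ via \eqref{add2}), and Corollary~\ref{corWig1} evaluates the limit in \eqref{trestelle} as $R_{\hat f}\le R$. For (e)$\Rightarrow$(d), the hypothesis $e^{\lambda\omega(x)+\mu\omega(\xi)}\Wig f\in L^{p,q}$ for all $\lambda,\mu$ is precisely what Lemma~\ref{lemWig10} requires to conclude $f\in\Sch_\omega(\R^d)$, after which Corollary~\ref{corWig1} shows the limit equals $R_{\hat f}$, so (e) forces $R_{\hat f}\le R$, i.e. $\supp\hat f\subseteq Q_R$. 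Since every analytic estimate has been pushed into the preparatory results, the theorem itself is essentially an assembly; the step I would watch most carefully is the correct invocation of Remark~\ref{add6} in (d)$\Rightarrow$(c), as it is what frees \eqref{duestelle} from the extra weight conditions and makes the characterization valid for an arbitrary non-quasianalytic $\omega$.
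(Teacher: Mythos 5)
Your proposal is correct and follows essentially the same route as the paper: (d) is used as the hub, with (a)$\Leftrightarrow$(d) by the complex Paley--Wiener theorem, (b)$\Leftrightarrow$(d) by Theorem~\ref{th1A1}, (c)$\Leftrightarrow$(d) via Theorem~\ref{propSomega}$(a)'$, Proposition~\ref{th3A1}, Remark~\ref{add6} and Lemma~\ref{lemma312}, and (e)$\Leftrightarrow$(d) via Corollary~\ref{corWig1} and Lemma~\ref{lemWig10}. The only caveat is your citation for (a)$\Leftrightarrow$(d): the paper deliberately avoids relying solely on \cite{BMT}, since that reference assumes $\log(1+t)=o(\omega(t))$, which the relaxed condition $(\gamma)$ of Definition~\ref{defomega} does not guarantee, and instead cites \cite{BG}, \cite{Bj} and \cite{Fie} for the general case.
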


\begin{proof}
{$(a)\Leftrightarrow(d)$}:
This is Paley-Wiener theorem in $\D_{(\omega)}(\R^{d})$ (Beurling case) for the convex set $Q_{R}$; see \cite[Theorem 2.14]{BG}, or \cite[Theorem 1.4.1]{Bj} and \cite[Satz 3.3]{Fie}, or \cite[Lemma 3.3]{BMT} when the weight
$\omega$ satisfies the additional assumption $\log (1+t)=o(\omega(t))$ as $t\to +\infty$.

{$(d)\Leftrightarrow(b)$}:
This is Theorem~\ref{th1A1}.

{$(d)\Rightarrow(c)$}:
It follows from Theorem~\ref{propSomega}$(a)'$, Proposition~\ref{th3A1} and Remark~\ref{add6}. 

{$(c)\Rightarrow(d)$}: It
follows from Proposition~\ref{th3A1}, Remark~\ref{add6} and Lemma~\ref{lemma312}.

{$(d)\Rightarrow(e)$}:
It is Corollary~\ref{corWig1}, since for $f\in\Sch_\omega(\R^{d})$,
\beqsn
e^{\lambda\omega(x)+\mu\omega(\xi)} \Wig f(x,\xi)\in L^{p,q}(\R^{2d})
\eeqsn
for all $\lambda,\mu\geq 0$.

{$(e)\Rightarrow(d)$}:
Follows from Lemma~\ref{lemWig10} and Corollary~\ref{corWig1}.
\end{proof}


\begin{Cor}

Given $1\leq p,q\leq+\infty$ and $R>0$,  we consider $f\in \Sch'_\omega(\R^{d})$ such that $e^{\lambda\omega(x)+\mu\omega(\xi)} \Wig f\in L^{p,q}(\R^{2d})$ for all $\lambda,\mu\geq0$. We have:
\begin{enumerate}
\item[$(a)$] $f\in \Sch_\omega(\R^{d})$ with $\supp \hat f\subseteq Q_R$ if and only if $R_{\hat f}\leq R$ and for all $\lambda,\mu>0$ there exists $C_{\lambda,\mu}>0$ such that
$$|\xi|^n_\infty\Wig f(x,\xi)<C_{\lambda,\mu} R^{n}(n+1)^{\frac d2}e^{-\lambda\omega\left(\frac{x}{n+1}\right)-\mu\omega(\xi)},\qquad n\in\N_0,\ \ \ (x,\xi)\in\R^{2d}.
$$

\item[$(b)$] $f\in \Sch_\omega(\R^{d})$ with $\supp  f\subseteq Q_R$ if and only if $R_{f}\leq R$ and for all $\lambda,\mu>0$ there exists $C_{\lambda,\mu}>0$ such that
  \beqsn
|x|_\infty|\Wig f(x,\xi)|\leq C_{\mu,\lambda}R^n(n+1)^{\frac d2}
e^{-\lambda\omega\left(\frac{\xi}{n+1}\right)-\mu\omega(x)},\qquad n\in\N_0,\ \ \ (x,\xi)\in\R^{2d}.
\eeqsn
\end{enumerate}

%
%
\end{Cor}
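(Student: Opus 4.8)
The plan is to prove part $(a)$ directly and then to obtain part $(b)$ from it by Fourier duality. In both parts the implication ``$\Leftarrow$'' is essentially immediate and does not use the pointwise estimates: the standing hypothesis that $e^{\lambda\omega(x)+\mu\omega(\xi)}\Wig f\in L^{p,q}(\R^{2d})$ for all $\lambda,\mu\ge0$, combined with Lemma~\ref{lemWig10}, already gives $f\in\Sch_\omega(\R^{d})$; and the assumption $R_{\hat f}\le R$ (resp.\ $R_f\le R$) is, by definition of $R_{\hat f}=\sup\{|\xi|_\infty:\xi\in\supp\hat f\}$, exactly the statement $\supp\hat f\subseteq Q_R$ (resp.\ $\supp f\subseteq Q_R$). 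So the real work is all in the direction ``$\Rightarrow$''.

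For ``$\Rightarrow$'' in $(a)$, I start from $f\in\Sch_\omega(\R^{d})$ with $\supp\hat f\subseteq Q_R$, so $R_{\hat f}\le R$. Since $\hat f\in\D_{(\omega)}(\R^{d})$ is supported in $Q_{R_{\hat f}}$, Theorem~\ref{th1A1} yields $f\in\PW_{R_{\hat f}}^\omega(\R^{d})$, and likewise $\tilde f\in\PW_{R_{\hat{\tilde f}}}^\omega(\R^{d})=\PW_{R_{\hat f}}^\omega(\R^{d})$ because $\hat{\tilde f}(\xi)=\hat f(-\xi)$. Choosing the window $\psi=\tilde f$ (hence $T=R_{\hat f}$) in Proposition~\ref{propG1} gives $f\in\PWG_{R_{\hat f}}^{\omega,\tilde f}(\R^{d})$, which by Definition~\ref{defPWG} means that for every $\lambda,\mu>0$ there is $C_{\lambda,\mu}>0$ with
\[
|\xi|_\infty^N|V_{\tilde f}f(x,\xi)|\le C_{\lambda,\mu}(2R_{\hat f})^N(N+1)^{d/2}
e^{-\lambda\omega\left(\frac{x}{N+1}\right)-\mu\omega(\xi)}
\]
for all $N\in\N_0$ and $x,\xi\in\R^d$ (here $R+T=2R_{\hat f}$). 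I would then transfer this to the Wigner transform via the identity $\Wig f(x,\xi)=2^d e^{2i\langle x,\xi\rangle}V_{\tilde f}f(2x,2\xi)$ of \cite[Lemma 4.3.1]{G} already used in Corollary~\ref{corWig1}: writing $|\xi|_\infty^N=2^{-N}|2\xi|_\infty^N$ and evaluating the bound above at $(2x,2\xi)$, the factors $2^{-N}$ and $(2R_{\hat f})^N$ collapse to $R_{\hat f}^N$, while the monotonicity of $\omega$ lets me replace $\omega(2x/(N+1))$ and $\omega(2\xi)$ by $\omega(x/(N+1))$ and $\omega(\xi)$. Using $R_{\hat f}\le R$ this delivers the estimate of $(a)$ with constant $2^dC_{\lambda,\mu}$.

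Part $(b)$ I would deduce by applying $(a)$ to $g:=\hat f$. Here $g\in\Sch_\omega(\R^{d})\Leftrightarrow f\in\Sch_\omega(\R^{d})$ (as $\F$ is an automorphism of $\Sch_\omega$), $\supp\hat g=-\supp f$ so that $\supp\hat g\subseteq Q_R\Leftrightarrow\supp f\subseteq Q_R$ by the symmetry $-Q_R=Q_R$, and $R_{\hat g}=R_{\hat{\hat f}}=R_f$. The symmetry relation $\Wig\hat f(x,\xi)=\Wig f(-\xi,x)$ from \cite[Prop. 4.3.2]{G} (as in Corollary~\ref{corWig2}), together with the evenness $\omega(-u)=\omega(u)$, turns the $(a)$-estimate for $g$ into the $(b)$-estimate for $f$ after the change of variables $(x,\xi)\mapsto(-\xi,x)$: the weight $\omega(x/(N+1))$ becomes $\omega(\xi/(N+1))$ and $\omega(\xi)$ becomes $\omega(x)$, exactly as required. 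One subtlety is that this change of variables interchanges the roles of the two exponents $p,q$; but once $f\in\Sch_\omega(\R^{d})$ one has $\Wig f,\Wig\hat f\in\Sch_\omega(\R^{2d})$, so $e^{\lambda\omega(x)+\mu\omega(\xi)}\Wig f$ lies in every mixed-norm space $L^{r,s}$, and the $L^{p,q}$ versus $L^{q,p}$ distinction is harmless.

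The step I expect to require the most care is the constant bookkeeping in the second paragraph. The point is that the window entering Proposition~\ref{propG1} must be taken to be $\tilde f$, not $f$, so that the radius in the $\PWG$-estimate is $R_{\hat f}+R_{\hat{\tilde f}}=2R_{\hat f}$; the dilation $V_{\tilde f}f(2x,2\xi)$ in the Wigner--Gabor identity then supplies the compensating $2^{-N}$, and it is precisely the cancellation $2^{-N}(2R_{\hat f})^N=R_{\hat f}^N$ that produces the sharp power $R^N$ in the final bound rather than $(2R)^N$.
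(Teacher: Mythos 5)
Your argument is correct and follows essentially the same route as the paper's own proof: reduce to $f\in\PW_{R}^{\omega}(\R^{d})$, apply Proposition~\ref{propG1} with window $\tilde f$ so that the $\PWG$-bound carries the radius $2R_{\hat f}$, pass to the Wigner transform via $\Wig f(x,\xi)=2^{d}e^{2i\langle x,\xi\rangle}V_{\tilde f}f(2x,2\xi)$ (where the dilation's $2^{-N}$ cancels against $(2R_{\hat f})^{N}$ and monotonicity of $\omega$ handles the weights), use Lemma~\ref{lemWig10} for the converse, and obtain $(b)$ from $(a)$ through $\Wig\hat f(x,\xi)=\Wig f(-\xi,x)$. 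The only cosmetic difference is that you justify $\supp\hat f\subseteq Q_R$ in the converse directly from the definition of $R_{\hat f}$ rather than citing Corollary~\ref{corWig1}, which is equally valid.
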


\begin{proof}
(a) If $f\in \Sch_\omega(\R^{d})$ and $\supp \hat{f}\subseteq Q_R$, by Theorem~\ref{th4A2}, we obtain that $f\in\PW_R^\omega(\R^{d})$. From
Proposition~\ref{propG1} we have $f\in\PWG_{R}^{\omega,\tilde f}$,
where $\tilde f(x)=f(-x)$, and hence
\beqsn
\sup_{n\in\N_0}\sup_{x,\xi\in\R^d}{(2R)}^{-n}\frac{1}{(n+1)^{d/2}}
e^{\lambda\omega\left(\frac{x}{n+1}\right)+\mu\omega(\xi)}|\xi|_{\infty}^n|V_{\tilde f}f(x,\xi)|
<+\infty.
\eeqsn

It follows from \cite[Lemma 4.3.1]{G} that
\beqsn
\lefteqn{\sup_{n\in\N_0}\sup_{x,\xi\in\R^{d}}{R}^{-n}\frac{1}{(n+1)^{d/2}}
e^{\lambda\omega\left(\frac{x}{n+1}\right)+\mu\omega(\xi)}|\xi|_{\infty}^n|\Wig f(x,\xi)|}\\
&&\le
\sup_{n\in\N_0}\sup_{x,\xi\in\R^{d}}{R}^{-n}\frac{1}{(n+1)^{d/2}}
e^{\lambda\omega\left(\frac{2x}{n+1}\right)+\mu\omega(2\xi)}\frac{|2\xi|_{\infty}^n}{2^n}
|2^{d}V_{\tilde f}f(2x,2\xi)|\\
&&=2^{d}
\sup_{n\in\N_0}\sup_{x,\xi\in\R^{d}}{(2 R)}^{-n}\frac{1}{(n+1)^{d/2}}
e^{\lambda\omega\left(\frac{x}{n+1}\right)+\mu\omega(\xi)}|\xi|_\infty^n|V_{\tilde f}f(x,\xi)|
<+\infty.
\eeqsn

Conversely, if $f\in \Sch'_\omega(\R^{d})$ with $e^{\lambda\omega(x)+\mu\omega(\xi)}\Wig f\in L^{p,q}(\R^{2d})$ and the inequality of (a) is satisfied, then $f\in \Sch_\omega(\R^{d})$ by Lemma~\ref{lemWig10} and $\supp \hat{f}\subseteq Q_R$ by
Corollary~\ref{corWig1}, since $R_{\hat f}\leq R$.

(b) It follows from (a) because
\beqsn
\sup_{n\in\N_0}\sup_{(x,\xi)\in\R^{2d}}{R}^{-n}\frac{1}{(n+1)^{\frac d2}}
e^{\lambda\omega\left(\frac{x}{n+1}\right)+\mu\omega(\xi)}|\xi|_\infty^n
|\Wig \hat{f}(x,\xi)|<+\infty
\eeqsn
is equivalent to
\beqsn
\sup_{n\in\N_0}\sup_{(x,\xi)\in\R^{2d}}{R}^{-n}\frac{1}{(n+1)^{\frac d2}}
e^{\lambda\omega\left(\frac{\xi}{n+1}\right)+\mu\omega(x)}|x|_\infty^n
|\Wig {f}(x,\xi)|<+\infty,
\eeqsn
since $\Wig \hat{f}(x,\xi)=\Wig f(-\xi,x)$ by \cite[Prop. 4.3.2]{G}.
\end{proof}


If we consider $\omega(t)=\log(1+t)$ we have that $\Sch_\omega$ is the classical
Schwartz space $\Sch$
and hence Theorem~\ref{th1A1} with $d=1$ coincides with
Theorem~1 of \cite{A1}, while Proposition~\ref{th3A1} for $d=1$ and $\lambda=0$ coincides with
Theorem~1 of \cite{B}.
We observe also that Lemma~\ref{lemma310} for $\omega(t)=\log(1+t)$ implies
\beqsn
f\in C^\infty(\R^{d}),\ (1+|x|)^\lambda f^{(\alpha)}(x)\in L^p(\R^{d})\ \forall\lambda>0,\,
\forall \alpha\in\N_0^{d},\,\mbox{for some }p\in[1,+\infty]
\Leftrightarrow f\in\Sch(\R^{d}).
\eeqsn
Moreover, Lemma~\ref{lemWig10} for $\omega(t)=\log(1+t)$ implies
\beqsn
f\in\Sch'(\R^d), &&(1+|x|)^\lambda(1+|\xi|)^\mu\Wig f(x,\xi)\in L^{p,q}(\R^d)\
\forall\mu,\lambda>0,\,\mbox{for some }p,q\in[1,+\infty]\\
\Leftrightarrow&&f\in\Sch(\R^d).
\eeqsn

The above remarks lead to the following corollary of Theorem~\ref{th4A2}
for $\omega(t)=\log(1+t)$:
\begin{Cor}
\label{corth317}
Let $1\leq p,q\leq+\infty$ and $R>0$.
Then the following conditions are equivalent:
\begin{itemize}
\item[(a)]
$f$ is an entire function in $\C^{d}$ and for all $k\in\N_0$ there exists $C_k>0$
such that
\beqsn
|f(z)|\leq C_k(1+|z|)^{-k}e^{H_{R}(\Im z)},
\quad z\in\C^{d}.
\eeqsn
\item[(b)]
$f\in \Sch(\R^{d})$ and  for all $\lambda>0$ there exists $C_\lambda>0$ such that
\beqsn
|f^{(\alpha)}(x)|\leq C_\lambda R^{\vert\alpha\vert}(|\alpha|+1)^\lambda(1+|x|)^{-\lambda}
\qquad  x\in\R^{d}, \ \alpha\in\N_0^{d}.
\eeqsn
\item[(c)]
$f\in\Sch(\R^{d})$ and
\beqsn
\lim_{n\to+\infty}\left(\max_{|\alpha|=n}\left\|f^{(\alpha)}(x)\right\|_{L^p}\right)^{1/n}=R_{\hat f}\leq R.
\eeqsn
\item[(d)]
$f\in\Sch(\R^{d})$ and $\supp\hat{f}\subseteq Q_R$.
\item[(e)]
$f\in\Sch(\R^{d})$ and
\beqsn
\lim_{n\to+\infty}\left\||\xi|^n_{\infty}\Wig f(x,\xi)\right\|_{L^{p,q}}^{1/n}=R_{\hat f}\leq R.
\eeqsn
\end{itemize}
\end{Cor}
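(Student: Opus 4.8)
The plan is to deduce this corollary directly from Theorem~\ref{th4A2} by specializing to the weight $\omega(t)=\log(1+t)$, for which $\Sch_\omega(\R^d)=\Sch(\R^d)$ and $\Sch'_\omega(\R^d)=\Sch'(\R^d)$, the weighted factors become powers $e^{\lambda\omega(t)}=(1+t)^\lambda$, and $\omega$ is subadditive (since $(1+t_1)(1+t_2)\geq 1+t_1+t_2$). I would show that, under this choice, each of the conditions (a)--(e) of the corollary is equivalent to the correspondingly labelled condition of Theorem~\ref{th4A2}; the chain of equivalences in the corollary then follows from that theorem. Conditions (a) and (d) require nothing: in (a) one only rewrites $e^{-k\omega(z)}=(1+|z|)^{-k}$, and (d) is literally condition (d) of the theorem once $\Sch_\omega=\Sch$.

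For (c) and (e) the only difference between the corollary and the theorem is that the theorem carries the extra weights $e^{\lambda\omega(x/(n+1))}$ (and $e^{\mu\omega(\xi)}$) inside the limits \eqref{duestelle} and \eqref{trestelle}, whereas the corollary takes $\lambda=0$ (and $\mu=0$). Here I would invoke Proposition~\ref{th3A1} for (c) and Corollary~\ref{corWig1} for (e): both assert that the corresponding limit equals $R_{\hat f}$ independently of $\lambda$ (resp.\ $\lambda,\mu$), so the weighted and unweighted limits agree. Note that Proposition~\ref{th3A1} applies precisely because $\log(1+t)$ is subadditive, so hypothesis \eqref{add4} holds. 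Finally, the integrability requirements in the theorem ($e^{\lambda\omega(x)}f^{(\alpha)}\in L^p$ for all $\lambda,\alpha$ in (c), and $e^{\lambda\omega(x)+\mu\omega(\xi)}\Wig f\in L^{p,q}$ for all $\lambda,\mu$ in (e)) reduce exactly to $f\in\Sch(\R^d)$ by the two remarks preceding the corollary, which are the specializations of Lemma~\ref{lemma310} and Lemma~\ref{lemWig10}. This matches the hypothesis $f\in\Sch(\R^d)$ appearing in the corollary.

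The only genuine computation is the equivalence of (b) with condition (b) of the theorem, i.e.\ that $f\in\PW_R^\omega(\R^d)$ is the same as the stated pointwise bound $|f^{(\alpha)}(x)|\leq C_\lambda R^{|\alpha|}(|\alpha|+1)^\lambda(1+|x|)^{-\lambda}$. This rests on comparing the two weights $\bigl(1+\tfrac{|x|}{|\alpha|+1}\bigr)^{-\lambda}$ and $(|\alpha|+1)^\lambda(1+|x|)^{-\lambda}$. Writing the ratio as $\bigl(\tfrac{|\alpha|+1+|x|}{1+|x|}\bigr)^{\lambda}$ shows it lies between $1$ and $(|\alpha|+1)^\lambda$, so the two weights are not pointwise comparable by a single constant; the direction $\PW\Rightarrow$(b) is immediate from $\bigl(1+\tfrac{|x|}{|\alpha|+1}\bigr)^{-\lambda}\leq (|\alpha|+1)^\lambda(1+|x|)^{-\lambda}$, while for the converse I would split into the regions $|x|\leq|\alpha|+1$ and $|x|>|\alpha|+1$. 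On the first region $1+\tfrac{|x|}{|\alpha|+1}\leq 2$, so the bound with $\lambda=0$ suffices; on the second region $1+\tfrac{|x|}{|\alpha|+1}\leq \tfrac{2|x|}{|\alpha|+1}$ and $1+|x|\geq|x|$, whence $(|\alpha|+1)^{\lambda}(1+|x|)^{-\lambda}\leq 2^{\lambda}\bigl(1+\tfrac{|x|}{|\alpha|+1}\bigr)^{-\lambda}$ and the desired estimate follows. This case split is the main obstacle, but it is entirely elementary; once it is in place, the corollary is an immediate translation of Theorem~\ref{th4A2}.
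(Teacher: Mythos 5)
Your proposal is correct and follows essentially the same route as the paper: specialize Theorem~\ref{th4A2} to $\omega(t)=\log(1+t)$, observe that \eqref{duestelle} and \eqref{trestelle} need only be imposed for $\lambda=0$ (resp.\ $\lambda=\mu=0$) because the limits are independent of the weights once $f\in\Sch(\R^d)$, and translate condition (b) by comparing $\bigl(1+\tfrac{|x|}{|\alpha|+1}\bigr)^{\lambda}$ with $(|\alpha|+1)^{-\lambda}(1+|x|)^{\lambda}$. The only cosmetic difference is that the paper handles this last comparison by a binomial-expansion inequality while you use a two-region case split; both are elementary and equivalent.
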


\begin{proof}
It follows directly from Theorem~\ref{th4A2} with $\omega(t)=\log(1+t)$
and the observation that \eqref{duestelle} and
\eqref{trestelle} can be required just for $\lambda=0$ since we have
$f\in\Sch(\R^{d})$.

Note also that we can substitute
$e^{\lambda\omega\left(\frac{x}{n+1}\right)}$ with $\frac{(1+|x|)^\lambda}{(n+1)^\lambda}$
instead of $\big(1+\frac{|x|}{n+1}\big)^\lambda$ since
\beqsn
\left(\frac{1+|x|}{n+1}\right)^\lambda\leq
\left(1+\frac{|x|}{n+1}\right)^\lambda
\leq\left(1+\frac{1+|x|}{n+1}\right)^{[\lambda]+1}=
\sum_{k=0}^{[\lambda]+1}\binom{[\lambda]+1}{k}
\left(\frac{1+|x|}{n+1}\right)^k.
\eeqsn
\end{proof}

\begin{Ex}
\begin{em}
For $k\in\N_0$, let $e_k$ be the Hermite function on $\R$ defined by
\beqsn
e_k(x)=\frac{1}{(2^kk!\sqrt{\pi})^{1/2}}e^{-x^2/2}H_k(x),\qquad x\in\R,
\eeqsn
where the Hermite polynomial $H_k(x)$ of degree $k$ is given by
\beqsn
H_k(x)=(-1)^ke^{x^2}\frac{d^k}{dx^k}e^{-x^2},\qquad x\in\R.
\eeqsn

The Hermite functions $e_k\in\Sch_\omega(\R)$ (see \cite[Lemma 3.2]{L} and
\cite[Remark 4.17]{BJO1}).
Then the Wigner transform $\Wig(e_j,e_k)\in\Sch_\omega(\R^2)$ and the
Fourier-Wigner transform
\beqsn
V(e_j,e_k)(y,t):=\frac{1}{2\pi}\int_\R e_j\left(x+\frac t2\right)
e_k\left(x-\frac t2\right)e^{iyx}dx
\eeqsn
is the inverse Fourier transform of $\Wig(e_j,e_k)$ (see \cite{W}):
\beqs
\label{hermite1}
\Wig(e_j,e_k)(x,\xi)=\F(V(e_j,e_k))(x,\xi).
\eeqs

Let us denote by
\beqsn
e_{j,k}(y,t)=V(e_j,e_k)(y,t),\qquad j,k\in\N_0.
\eeqsn

By \eqref{hermite1}
\beqsn
\Wig(e_j,e_k)(x,\xi)=\hat{e}_{j,k}(x,\xi)
\eeqsn
and, by \cite[Thm. 3.4]{W}, for all $j,k\in\N_0$:
\beqsn
Le_{j,k}(y,t)=(2k+1)e_{j,k}(y,t),
\eeqsn
where $L$ is the twisted Laplacian defined by
\beqsn
L:=\left(D_y-\frac12 t\right)^2+\left(D_t+\frac12 y\right)^2.
\eeqsn

Then
\beqsn
\widehat{Le_{j,k}}=(2k+1)\hat{e}_{j,k}
\eeqsn
and, by \cite[Ex. 5.4]{BJO1}:
\beqs
\label{hermite2}
\hat{L}\hat{e}_{j,k}(x,\xi)=(2k+1)\hat{e}_{j,k}(x,\xi),
\eeqs
where
\beqsn
\hat{L}:=\left(\frac12 D_\xi+x\right)^2+\left(\frac12 D_x-\xi\right)^2.
\eeqsn

It is well-known that the Hermite functions are eigenfunctions of the
Fourier transform:
\beqsn
\hat{e}_k(\xi)=\lambda{e}_k(\xi)
\eeqsn
for some $\lambda\in\C$. Since $e_k$ does not have compact support,
we have therefore that $\hat{e}_k$ does not have compact support,
i.e. $R_{\hat{e}_k}=+\infty$. Since $e_k\in\Sch_\omega(\R)$,
by Corollary~\ref{corWig1} we have that for all $p,q\in[1,+\infty]$ and
$\mu,\lambda\geq0$:
\beqsn
\lim_{n\to+\infty}\left\|e^{\lambda\omega\left(\frac{x}{n+1}\right)+\mu\omega(\xi)}
|\xi|^n\Wig(e_k,e_k)(x,\xi)\right\|_{L^{p,q}}^{1/n}=+\infty,
\eeqsn
i.e. the eigenfunctions $\hat{e}_{k,k}=\Wig(e_k,e_k)$ of $\hat{L}$
satisfy:
\beqsn
\lim_{n\to+\infty}\left\|e^{\lambda\omega\left(\frac{x}{n+1}\right)+\mu\omega(\xi)}
|\xi|^n\hat{e}_{k,k}(x,\xi)\right\|_{L^{p,q}}^{1/n}=+\infty,
\qquad\forall\mu,\lambda\geq0.
\eeqsn

Moreover, Proposition~\ref{th3A1} and Remark~\ref{add6} imply that the Hermite functions $e_k$
satisfy
\beqsn
\lim_{n\to+\infty}\left\|e^{\lambda\omega\left(\frac{x}{n+1}\right)}
\frac{d^n}{dx^n} e_k(x)\right\|_{L^p}^{1/n}=+\infty
\eeqsn
for all $\lambda\geq0$ and $p\in[1,+\infty]$.
\end{em}
\end{Ex}

\section{Arbitrary support}
In order to characterize the support of $\hat{f}$ in terms of the growth
of some derivatives
of $f$ when $\supp\hat{f}$ is not compact, we substitute,
in the definition of $\PW_R^\omega(\R^d)$, the
derivatives $D^\alpha$ by the iterates $P(D)^n$ of a linear partial
differential operator with
constant coefficients and generalize some results of \cite{AD}.

Given a polynomial $P\in\C[\xi_1,\ldots,\xi_d]$ we denote by $P(D)$ the corresponding linear
partial differential operator with symbol $P$, where we use the standard notation $D_j:=-i\partial_j$. Following \cite{AD}, we define for an ultradistribution $T$ on $\R^d$ and a polynomial
$P\in\C[\xi_1,\ldots,\xi_d]$,
\beqs
\label{5}
R(P,T):=\sup\{|P(\xi)|:\ \xi\in\supp T\},
\eeqs
with the convention that $R(P,T)=0$ if $T\equiv0$.

\begin{Lemma}
\label{lemmapol}
Let $P\in\C[\xi_1,\ldots,\xi_d]$ be a polynomial of degree $m\geq1$.
Then, for all $k\in\N_0^d$ and $n\in\N$:
\beqs
\label{16}
D_\xi^kP(\xi)^n=\sum_{\ell=0}^{|k|}\frac{n!}{(n-\ell)!}P_{\ell,k}(\xi)P(\xi)^{n-\ell},
\eeqs
for polynomials $P_{\ell,k}(\xi)$ independent of $n$ and of degree
$\deg P_{\ell,k}\leq {\ell}(m-1)$.
\end{Lemma}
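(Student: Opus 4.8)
The plan is to argue by induction on the order $|k|$ of the multi-index, the guiding idea being that each differentiation $D_{\xi_j}$ acts on a summand $P_{\ell,k}(\xi)P(\xi)^{n-\ell}$ in one of two ways: either it hits the coefficient $P_{\ell,k}$, leaving the power $n-\ell$ unchanged but lowering the coefficient degree by one, or it hits the factor $P^{n-\ell}$, lowering the power by one and producing exactly the factor $(n-\ell)$ that promotes $n!/(n-\ell)!$ to the next falling factorial $n!/(n-\ell-1)!$. For $|k|=0$ the statement is immediate: $D_\xi^0 P(\xi)^n=P(\xi)^n$, so \eqref{16} holds with the single coefficient $P_{0,0}\equiv 1$, which is independent of $n$ and has degree $0=0\cdot(m-1)$.

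For the inductive step I would assume \eqref{16} for a fixed $k$, pick $j\in\{1,\dots,d\}$, apply $D_{\xi_j}$ to both sides, and expand each summand by the Leibniz rule
\[
D_{\xi_j}\big(P_{\ell,k}P^{n-\ell}\big)=\big(D_{\xi_j}P_{\ell,k}\big)P^{n-\ell}+(n-\ell)\,P_{\ell,k}\big(D_{\xi_j}P\big)P^{n-\ell-1}.
\]
The first group of terms keeps the exponent $n-\ell$; in the second group I rewrite $\tfrac{n!}{(n-\ell)!}(n-\ell)=\tfrac{n!}{(n-\ell-1)!}$ and reindex $\ell\mapsto\ell+1$, so that the exponent becomes $n-(\ell+1)$ and the running index $\ell'$ now ranges over $0,\dots,|k|+1$. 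Collecting both groups yields \eqref{16} for the multi-index $k+e_j$, with coefficients obeying the recursion
\[
P_{\ell,k+e_j}(\xi)=D_{\xi_j}P_{\ell,k}(\xi)+\big(D_{\xi_j}P\big)(\xi)\,P_{\ell-1,k}(\xi),\qquad 0\le\ell\le|k|+1,
\]
with the conventions $P_{\ell,k}\equiv 0$ for $\ell<0$ or $\ell>|k|$. Since each $P_{\ell,k}$ is independent of $n$ by hypothesis, so is each $P_{\ell,k+e_j}$.

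The only point requiring any care—and it is mild, so I do not expect a genuine obstacle here—is propagating the degree bound through this recursion. Using the inductive bounds $\deg P_{\ell,k}\le\ell(m-1)$ together with $\deg D_{\xi_j}P\le m-1$, the first term satisfies $\deg D_{\xi_j}P_{\ell,k}\le\ell(m-1)-1$, while the second satisfies $\deg\big((D_{\xi_j}P)P_{\ell-1,k}\big)\le(m-1)+(\ell-1)(m-1)=\ell(m-1)$; hence $\deg P_{\ell,k+e_j}\le\ell(m-1)$, exactly the estimate needed for the multi-index $k+e_j$ of length $|k|+1$. This closes the induction and establishes the lemma.
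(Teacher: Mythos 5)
Your proof is correct and follows essentially the same route as the paper: induction on $|k|$, with the Leibniz rule splitting each summand into a term that preserves the exponent of $P$ and a term that lowers it (absorbing the factor $n-\ell$ into the falling factorial), followed by reindexing and the same degree count $\deg\bigl(P_{\ell-1,k}\,D_{\xi_j}P\bigr)\leq(\ell-1)(m-1)+(m-1)=\ell(m-1)$. The explicit recursion you write for $P_{\ell,k+\boldsymbol{e}_j}$ is a nice touch but is only a more formal rendering of what the paper's argument already does.
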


\begin{proof}
Let us prove it by induction on $|k|$. If $|k|=0$ then the statement is trivial with $P_{0,0}\equiv1$. Assume \eqref{16} to be valid for $|k|$,  and let us prove it for $|k|+1$, i.e. for a multi-index $k+\boldsymbol{e}_{j}$
for some $1\leq j\leq d$, where $\boldsymbol{e}_j$ is the vector with all entries equal to 0 except the
$j$-th entry equal to 1.

By the inductive assumption
\beqsn
D_\xi^{k+\boldsymbol{e}_j}P(\xi)^n=&&\sum_{\ell=0}^{|k|}\frac{n!}{(n-\ell)!}D_{\xi_j}[
P_{\ell,k}(\xi)P(\xi)^{n-\ell}]\\
=&&\sum_{\ell=0}^{|k|}\frac{n!}{(n-\ell)!}[D_{\xi_j}P_{\ell,k}(\xi)\cdot P(\xi)^{n-\ell}
+P_{\ell,k}(\xi)(n-\ell)\cdot D_{\xi_j}P(\xi)\cdot P(\xi)^{n-\ell-1}]\\
=&&\sum_{\ell=0}^{|k|}\frac{n!}{(n-\ell)!}D_{\xi_j}P_{\ell,k}(\xi)\cdot P(\xi)^{n-\ell}
+\sum_{\ell=0}^{|k|}\frac{n!}{(n-\ell-1)!}P_{\ell,k}(\xi)\cdot D_{\xi_j}P(\xi)\cdot P(\xi)^{n-\ell-1}
\eeqsn
with $\deg\left(P_{\ell,k}(\xi)D_{\xi_j}P(\xi)\right)\leq \ell(m-1)+(m-1)=(\ell+1)(m-1)$.

We can thus write
\beqsn
D_\xi^{k+\boldsymbol{e}_j}P(\xi)^n=\sum_{\tilde\ell=0}^{|k|+1}\frac{n!}{(n-\tilde\ell)!}
P_{\tilde\ell,k+\boldsymbol{e}_j}(\xi)P(\xi)^{n-\tilde\ell},
\eeqsn
for some polynomials $P_{\tilde\ell,k+\boldsymbol{e}_j}$ not
depending on $n$ and of degree $\deg P_{\tilde\ell,k+\boldsymbol{e}_j}\leq{\tilde{\ell}}(m-1)$.
\end{proof}

\begin{Th}
\label{th22AD}
Let $P\in\C[x_1,\ldots,x_d]$ be a polynomial of degree $m\geq1$. Let
$f\in\Sch_\omega(\R^d)$ and let $R(P,\hat{f})$ be defined as in \eqref{5}.
Then the following conditions are equivalent:
\begin{itemize}
\item[(a)]
$\forall\lambda>0$ $\exists C_\lambda>0$ such that $\forall n\in\N_0$, $x\in\R^d$
\beqs
\label{12}
|P(D)^nf(x)|\leq C_\lambda R^n e^{-\lambda\omega\left(\left|\frac{x}{n+1}\right|^{1/m}\right)};
\eeqs
\item[(b)]
$R(P,\hat{f})\leq R$.
\end{itemize}
\end{Th}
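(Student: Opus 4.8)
The plan is to prove the two implications by adapting, respectively, the method of Theorem~\ref{th1A1} (for $(b)\Rightarrow(a)$) and the duality argument in the proof of Proposition~\ref{th3A1} (for $(a)\Rightarrow(b)$), the genuinely new ingredient being Lemma~\ref{lemmapol} together with its analogue for the derivatives of $P(\xi)^{-n}$. Throughout I use that $\F(P(D)^nf)(\xi)=P(\xi)^n\hat f(\xi)$ and that, since $f\in\Sch_\omega(\R^d)$, also $\hat f\in\Sch_\omega(\R^d)$; in particular $\hat f$ is a continuous $L^1$-function, all its derivatives are supported in $\supp\hat f$, and on $\supp\hat f$ one has $|P(\xi)|\le R(P,\hat f)$.

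$(b)\Rightarrow(a)$. Assume $R(P,\hat f)\le R$. Starting from $P(D)^nf(x)=(2\pi)^{-d}\int P(\xi)^n\hat f(\xi)e^{i\langle x,\xi\rangle}d\xi$, I multiply by $|x|^{2N}$, use $|x|^{2N}e^{i\langle x,\xi\rangle}=(-\Delta_\xi)^Ne^{i\langle x,\xi\rangle}$ and integrate by parts, so that $|x|^{2N}|P(D)^nf(x)|\le(2\pi)^{-d}\int|(-\Delta_\xi)^N(P^n\hat f)|\,d\xi$. Expanding $(-\Delta_\xi)^N=\sum_{|\nu|=N}\frac{N!}{\nu!}D_\xi^{2\nu}$ and using Leibniz, the derivatives of $P^n$ are handled by Lemma~\ref{lemmapol}: each $D_\xi^hP^n$ produces terms $\frac{n!}{(n-\ell)!}P_{\ell,h}(\xi)P(\xi)^{n-\ell}$ with $\frac{n!}{(n-\ell)!}\le(n+1)^{2N}$, $\deg P_{\ell,h}\le\ell(m-1)\le 2N(m-1)$, and $|P(\xi)^{n-\ell}|\le R^{n-\ell}$ on $\supp\hat f$. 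The remaining integral $\int|P_{\ell,h}(\xi)|\,|D^{2\nu-h}\hat f(\xi)|\,d\xi$ is estimated via $|P_{\ell,h}(\xi)|\le C|\xi|^{2N(m-1)}\le Ce^{\lambda\varphi^*(2N(m-1)/\lambda)}e^{\lambda\omega(\xi)}$ (Lemma~\ref{phistar}(v)) and, since $\hat f\in\Sch_\omega$, $|D^{2\nu-h}\hat f(\xi)|\le C_{\mu,\lambda}e^{\lambda\varphi^*(|2\nu-h|/\lambda)}e^{-\mu\omega(\xi)}$ (Theorem~\ref{thSomega}(g)); choosing $\mu$ large (cf.\ \eqref{add2}) makes the $\xi$-integral finite, and since $|2\nu-h|\le 2N$, Lemma~\ref{phistar}(ii) combines the two $\varphi^*$-factors into $e^{\lambda\varphi^*(2Nm/\lambda)}$. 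Now all remaining factors are either at most $C^{2N}$ (the binomials $\binom{2\nu}{h}$, the sum $\sum_{|\nu|=N}\frac{N!}{\nu!}=d^N$, the coefficient bounds for $P_{\ell,h}$, and $R^{-\ell}\le(\max\{1,R^{-1}\})^{2N}$) or equal to $(n+1)^{2N}$, whence $|P(D)^nf(x)|\le C_\lambda R^n\,(A/|x|)^{2N}e^{\lambda\varphi^*(2Nm/\lambda)}$ with $A=C(n+1)$ independent of $N$ and $x$. Taking the infimum over $N\in\N_0$ and applying Lemma~\ref{phistar}(vi) with $\sigma=2m$, $j=N$ and $t=(|x|/A)^{1/m}\ge 1$ yields $C_\lambda R^n e^{-(\lambda-2m/b)\omega((|x|/A)^{1/m})-2ma/b}$ for $|x|\ge A$; finally condition $(\gamma)$ and \eqref{add3} absorb the constant $A$ inside $\omega$ and turn this into $C_{\lambda'}R^ne^{-\lambda'\omega(|x/(n+1)|^{1/m})}$ for every $\lambda'>0$. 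For $|x|<A$ the trivial bound $|P(D)^nf(x)|\le(2\pi)^{-d}R^n\|\hat f\|_{L^1}$ together with the monotonicity of $\omega$ gives \eqref{12} there as well, exactly as in Theorem~\ref{th1A1}.

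$(a)\Rightarrow(b)$. Arguing by contradiction, suppose there is $\xi^0\in\supp\hat f$ with $|P(\xi^0)|>R$. By continuity I fix $\varepsilon>0$ with $\rho:=|P(\xi^0)|-\varepsilon>R$ and a neighbourhood $U$ of $\xi^0$ on which $|P(\xi)|\ge\rho$; since $\hat f$ is continuous with $\xi^0\in\supp\hat f$, I may choose $\psi\in\D_{(\omega)}(\R^d)$ with $\supp\psi\subseteq U$ and $\langle\hat f,\psi\rangle\ne0$. On $U$ the function $P^{-n}$ is real-analytic, so $\overline{P}^{-n}\psi\in\D_{(\omega)}(\R^d)$ and, by the properties of the Fourier transform, $\langle\hat f,\psi\rangle=\langle P^n\hat f,\overline{P}^{-n}\psi\rangle=(2\pi)^d\langle P(D)^nf,\F^{-1}(\overline{P}^{-n}\psi)\rangle$. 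By H\"older's inequality and $\|P(D)^nf\|_{L^\infty}\le C_\lambda R^n$ from \eqref{12}, it remains to control $\|\F^{-1}(\overline{P}^{-n}\psi)\|_{L^1}$. Differentiating $P^{-n}$ on $U$ (by the same scheme as in Lemma~\ref{lemmapol}, the order-$|k|$ derivatives grow like $n^{|k|}\rho^{-n}$ times functions smooth on $U$), integration by parts gives $(1+|x|^{2K})|\F^{-1}(\overline{P}^{-n}\psi)(x)|\le C_K\,n^{2K}\rho^{-n}$ on the compact set $\supp\psi$, so choosing $2K>d$ yields $\|\F^{-1}(\overline{P}^{-n}\psi)\|_{L^1}\le C_K n^{2K}\rho^{-n}$. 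Altogether $0<|\langle\hat f,\psi\rangle|\le C\,(R/\rho)^n n^{2K}\to0$ as $n\to\infty$, a contradiction. Hence $|P(\xi)|\le R$ for every $\xi\in\supp\hat f$, i.e.\ $R(P,\hat f)\le R$.

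The main obstacle is the bookkeeping in $(b)\Rightarrow(a)$: one must keep track simultaneously of the combinatorial factor $\frac{n!}{(n-\ell)!}$, of the degree $2N(m-1)$ of the polynomials $P_{\ell,h}$ supplied by Lemma~\ref{lemmapol}, and of the two $\varphi^*$-factors, in such a way that the degree $m$ of $P$ enters \emph{only} through the argument $2Nm$ of $\varphi^*$. This is precisely what, via Lemma~\ref{phistar}(vi) with $\sigma=2m$, converts the estimate into the correct weight $\omega(|x/(n+1)|^{1/m})$; note that for $m=1$ the whole argument collapses to the proof of Theorem~\ref{th1A1}.
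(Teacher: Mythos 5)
Your proof of $(b)\Rightarrow(a)$ is essentially the paper's argument: Fourier inversion, integration by parts against $\Delta_\xi^N$, Lemma~\ref{lemmapol} for the derivatives of $P^n$, and Lemma~\ref{phistar}(vi) applied with the combined parameter $2mN$ so that the degree $m$ ends up inside the $m$-th root in the weight. The only cosmetic difference is that you estimate the polynomial factors $P_{\ell,k}$ via Lemma~\ref{phistar}(v) together with Theorem~\ref{thSomega}(g), while the paper feeds the whole polynomial $(1+|\xi|)^{d+1}P_{\ell,k}(\xi)P(\xi)^{|k|-\ell}$ into the $\xi^\beta$-slot of Theorem~\ref{thSomega}(e); both routes produce the same factor $e^{\lambda\varphi^*\left(\frac{2mN}{\lambda}\right)}$, and your treatment of the region $|x|\le C(n+1)$ via the trivial bound $\|\hat f\|_{L^1}R^n$ is exactly the paper's. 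For $(a)\Rightarrow(b)$, however, you take a genuinely different and considerably longer route. The paper simply evaluates $\F\bigl(P(D)^nf\bigr)(\xi_0)=P(\xi_0)^n\hat f(\xi_0)$ at a fixed point $\xi_0$ with $|P(\xi_0)|\ge R+\varepsilon$: condition $(a)$, with $\lambda$ large enough that $e^{-\lambda\omega(|y|^{1/m})}\in L^1(\R^d)$, gives $|\hat f(\xi_0)|\le C\bigl(R/|P(\xi_0)|\bigr)^n(n+1)^d\to0$, so $\hat f$ vanishes identically on the open set where $|P|>R$ and $(b)$ follows with no test function, no contradiction, and no estimate on the derivatives of $P^{-n}$. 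Your dual argument (pairing $\hat f$ with $\psi$ supported where $|P|\ge\rho>R$ and controlling $\|\F^{-1}(\overline{P}^{-n}\psi)\|_{L^1}$ by differentiating $P^{-n}$) is correct and is the natural analogue of the lower-bound step in Proposition~\ref{th3A1}, where one genuinely needs duality because only $L^p$-information is available; here, since $\hat f\in\Sch_\omega(\R^d)$ is a continuous function and $(a)$ gives an $L^1$-bound on $P(D)^nf$, the pointwise evaluation already works and is the shorter proof.
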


\begin{proof}
Let us first prove that $(a)\Rightarrow(b)$.
Let $\xi_0\in\R^d$ and $\varepsilon>0$ such that $|P(\xi_0)|\geq R+\varepsilon>0$.
We have to prove that $\hat{f}(\xi_0)=0$.

For every $\lambda>0$ and $n\in\N_0$ we have, from $(a)$:
\beqsn
|\hat{f}(\xi_0)|=&&\left|\frac{1}{P(\xi_0)^n}\int_{\R^d}(P(D)^nf(x))e^{-i\langle\xi_0,x\rangle}dx
\right|\\
\leq&&\frac{1}{|P(\xi_0)|^n}\int_{\R^d}C_\lambda R^n
e^{-\lambda\omega\left(\left|\frac{x}{n+1}\right|^{1/m}\right)}dx\\
=&&C_\lambda\frac{1}{|P(\xi_0)|^n}R^n(n+1)^d\int_{\R^d}e^{-\lambda\omega(|y|^{1/m})}dy\\
=&&{C^\prime_m}\left(\frac{R}{|P(\xi_0)|}\right)^n(n+1)^d,
\eeqsn
for some ${C^\prime_m}>0$, choosing $\lambda$ sufficiently large in such a way that
$e^{-\lambda\omega(\vert y\vert^{1/m})}\in L^1(\R^d)$, cf. \eqref{add2}.
Letting $n\to+\infty$ we have that $\hat{f}(\xi_0)=0$ since $|P(\xi_0)|\geq R+\varepsilon$.
Therefore $(b)$ is satisfied.

Conversely,
let us prove that $(b)\Rightarrow(a)$. By the Fourier inversion formula, for {$x\neq 0$} and $N\in\N_0$:
\beqsn
|P(D)^nf(x)|=&&\frac{1}{(2\pi)^d}\left|\int_{\R^d}P(\xi)^n\hat{f}(\xi)e^{i\langle x,\xi\rangle}
d\xi\right|\\
\leq&&\frac{1}{|x|^{2N}}\left|\int_{\R^d}P(\xi)^n\hat{f}(\xi)\Delta_\xi^Ne^{i\langle x,\xi\rangle}
d\xi\right|\\
\leq&&\frac{1}{|x|^{2N}}\int_{\R^d}\big|\Delta_\xi^N\big(P(\xi)^n\hat{f}(\xi)\big)\big|d\xi\\
\leq&&\frac{1}{|x|^{2N}}\sum_{{\vert\nu\vert=N}}\frac{N!}{{\nu!}}
\int_{|P(\xi)|\leq R}\big|D_{\xi_1}^{{2\nu_1}}\cdots D_{\xi_d}^{{2\nu_d}}\big(P(\xi)^n\hat{f}(\xi)
\big)\big|d\xi\\
\leq&&\frac{1}{|x|^{2N}}\sum_{{\vert\nu\vert=N}}\frac{N!}{{\nu!}}
\sum_{k_1=0}^{{2\nu_1}}\binom{{2\nu_1}}{k_1}\cdots\sum_{k_d=0}^{{2\nu_d}}\binom{{2\nu_d}}{k_d}\\
&&\cdot\int_{|P(\xi)|\leq R}|D_{\xi}^kP(\xi)^n|
\cdot|D_\xi^{{2\nu-k}}\hat{f}(\xi)|d\xi\\
\leq&&\frac{1}{|x|^{2N}}\sum_{{\vert\nu\vert=N}}\frac{N!}{{\nu!}}
\sum_{k_1=0}^{{2\nu_1}}\binom{{2\nu_1}}{k_1}\cdots\sum_{k_d=0}^{{2\nu_d}}\binom{{2\nu_d}}{k_d}
\sum_{\ell=0}^{|k|}\frac{n!}{(n-\ell)!}\\
&&\cdot\int_{|P(\xi)|\leq R}|P_{\ell,k}(\xi)|\cdot|P(\xi)|^{n-\ell}|D_\xi^{{2\nu-k}}\hat{f}(\xi)|d\xi,
\eeqsn
for polynomials $P_{\ell,k}(\xi)$ with $\deg P_{\ell,k}\leq {\ell}(m-1)$ independent of $n$,
by Lemma~\ref{lemmapol}.

Since $\hat{f}\in\Sch_\omega(\R^d)$ we thus have that for every
$\mu,\lambda>0$ there exists $C_{\mu,\lambda}>0$ such that
\beqs
\nonumber
|P(D)^nf(x)|\leq&&\sum_{{\vert\nu\vert=N}}\frac{N!}{{\nu!}}
\sum_{k_1=0}^{{2\nu_1}}\binom{{2\nu_1}}{k_1}\cdots\sum_{k_d=0}^{{2\nu_d}}\binom{{2\nu_d}}{k_d}
\sum_{\ell=0}^{|k|}n^\ell
\frac{1}{|x|^{2N}} R^{n-|k|}\\
\nonumber
&&\cdot\int_{|P(\xi)|\leq R}|P_{\ell,k}(\xi)|\cdot|P(\xi)|^{|k|-\ell}\cdot|D_\xi^{{2\nu-k}}\hat{f}(\xi)|d\xi\\
\nonumber
\leq&&\sum_{{\vert\nu\vert=N}}\frac{N!}{{\nu!}}
\sum_{k_1=0}^{{2\nu_1}}\binom{{2\nu_1}}{k_1}\cdots\sum_{k_d=0}^{{2\nu_d}}\binom{{2\nu_d}}{k_d}
\sum_{\ell=0}^{|k|}n^{|k|}
\frac{1}{|x|^{2N}} R^{n}\left(1+\frac 1R\right)^{2N}\\
\nonumber
&&\cdot\int_{|P(\xi)|\leq R}(1+|\xi|)^{d+1}
|P_{\ell,k}(\xi)|\cdot|P(\xi)|^{|k|-\ell}\cdot|D_\xi^{{2\nu-k}}\hat{f}(\xi)|
(1+|\xi|)^{-(d+1)}d\xi\\
\nonumber
\leq&&\sum_{{\vert\nu\vert=N}}\frac{N!}{{\nu!}}
\sum_{k_1=0}^{{2\nu_1}}\binom{{2\nu_1}}{k_1}\cdots\sum_{k_d=0}^{{2\nu_d}}\binom{{2\nu_d}}{k_d}
\sum_{\ell=0}^{|k|}n^{2N}
\frac{1}{|x|^{2N}} R^{n}\left(1+\frac 1R\right)^{2N}\\
\label{7}
&&\cdot C_{\mu,\lambda}e^{\lambda\varphi^*\left(\frac{2N-|k|}{\lambda}\right)}
e^{\mu\varphi^*\left(\frac{m|k|-\ell+d+1}{\mu}\right)}
\int_{\R^d}\frac{1}{(1+|\xi|)^{d+1}}d\xi,
\eeqs
by Theorem~\ref{thSomega}(e).

Now, since $\varphi^*$ is increasing, we have that
\beqs
\label{8}
e^{\lambda\varphi^*\left(\frac{2N-|k|}{\lambda}\right)}\leq
e^{\lambda\varphi^*\left(\frac{2mN-m|k|}{\lambda}\right)}
\eeqs
and
\beqs
\label{9}
e^{\mu\varphi^*\left(\frac{m|k|-\ell+d+1}{\mu}\right)}\leq
C_\mu e^{\frac\mu2\varphi^*\left(\frac{m|k|-\ell}{\mu/2}\right)}
e^{\frac\mu2\varphi^*\left(\frac{d+1}{\mu/2}\right)}
\leq C_{\mu,d}e^{\frac\mu2\varphi^*\left(\frac{m|k|}{\mu/2}\right)}
\eeqs
by Lemma~\ref{phistar}(ix).

Moreover, taking $\lambda=\mu/2$, we have that
\beqs
\label{10}
e^{\lambda\varphi^*\left(\frac{2mN-m|k|}{\lambda}\right)}
e^{\lambda\varphi^*\left(\frac{m|k|}{\lambda}\right)}
\leq e^{\lambda\varphi^*\left(\frac{2mN}{\lambda}\right)},
\eeqs
by Lemma~\ref{phistar}(ii).

We use now \eqref{8}, \eqref{9} and \eqref{10} in \eqref{7} to obtain that for every
$\lambda>0$ there exists $C_\lambda>0$ such that
\beqsn
|P(D)^nf(x)|\leq&&\sum_{{\vert\nu\vert=N}}\frac{N!}{{\nu!}}
\sum_{k_1=0}^{{2\nu_1}}\binom{{2\nu_1}}{k_1}\cdots\sum_{k_d=0}^{{2\nu_d}}\binom{{2\nu_d}}{k_d}
(2N+1)n^{2N}\\
&&\cdot\frac{1}{|x|^{2N}} R^{n}\left(1+\frac 1R\right)^{2N}C_\lambda
e^{\lambda\varphi^*\left(\frac{2mN}{\lambda}\right)}\\
\leq&& C_\lambda R^n\frac{1}{|x|^{2N}}d^N2^{2N}2^{2N}n^{2N}
\left(1+\frac 1R\right)^{2N}e^{\lambda\varphi^*\left(\frac{2mN}{\lambda}\right)}\\
\leq&&C_\lambda R^n\left[\frac{1}{|x|^{2N/m}}d^{2N}4^{2N}(n+1)^{\frac{2N}{m}}
\left(1+\frac 1R\right)^{2N}e^{\frac\lambda m\varphi^*\left(\frac{2N}{\lambda/m}\right)}
\right]^m.
\eeqsn

Taking the infimum over $N\in\N_0$ and applying Lemma~\ref{phistar}(vi), we have
that for all {$|x|\geq (4d)^m(n+1)(1+\frac{1}{R})^m$}

\beqs
|P(D)^nf(x)|\leq C_\lambda R^n
\bigg[e^{-\left(\frac\lambda m-\frac 2b\right)\omega\left(\frac{|x|^{1/m}}{4d(n+1)^{1/m}
\left(1+1/R\right)}\right)-\frac{2a}{b}}\bigg]^m, \label{11-1}
\eeqs
for $a\in\R, b>0$ as in condition $(\gamma)$ of Definition~\ref{defomega}.
For $|x|< (4d)^m(n+1)(1+\frac{1}{R})^m$ we have
\beqsn
\vert P(D)^nf(x)\vert &=&\frac{1}{(2\pi)^d}\left\vert \int_{\R^d}P(\xi)^n \hat{f}(\xi) e^{i\langle x,\xi\rangle}\,d\xi\right\vert \\
&\leq&\int_{\supp\hat{f}} \vert P(\xi)\vert^n \vert\hat{f}(\xi)\vert\,d\xi\leq CR^n,
\eeqsn
with $C=\Vert\hat{f}\Vert_{L^1(\R^d)}$ (observe that $C$ is finite since $\hat{f}\in\Sch_\omega(\R^d)$).
Since $\omega$ is increasing, we then have that \eqref{11-1} is satisfied for $\vert x\vert<(4d)^m (n+1)(1+\frac{1}{R})^m$
with $C_\lambda =C e^{\vert \lambda-\frac{2m}{b}\vert\omega(1)+\frac{2ma}{b}}$, and so \eqref{11-1} is satisfied for every $x\in\R^d$. From \eqref{add3}
we finally have that for every $\mu>0$ there exists $C_\mu>0$, depending on
$\mu,m,a,b,d$ and $R$, such that
\beqsn
|P(D)^nf(x)|\leq C_\mu R^n e^{-\mu\omega\left(\left|\frac{x}{n+1}\right|^{1/m}\right)},
\qquad\forall x\in\R^d,
\eeqsn
i.e. \eqref{12} is satisfied.
\end{proof}
Based in some known results of Andersen~\cite{AD}, we can deduce easily the following corollary:
\begin{Cor}\label{cor-AND}
If $P\in\C[x_1,\ldots,x_d]$ is a polynomial of degree $m\geq1$,  $f\in\Sch_\omega(\R^d)$ and $1\le p\le \infty,$ we have, for all $\lambda\ge 0$,
\begin{equation}\label{limit-with-polynomials}
\lim_{n\to+\infty}\left\|e^{\lambda\omega\left(\left|\frac{x}{n+1}\right|^{1/m}\right)} P(D)^nf(x)\right\|_{L^p}^{1/n}= R(P,\hat{f}).
\end{equation}
\end{Cor}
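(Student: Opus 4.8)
The plan is to prove the two inequalities $\limsup\le R(P,\hat f)$ and $\liminf\ge R(P,\hat f)$ separately, splitting exactly along the two implications of Theorem~\ref{th22AD}. Throughout write $R_0:=R(P,\hat f)$ and abbreviate $w_n(x):=\omega\bigl(|x/(n+1)|^{1/m}\bigr)$. For the upper bound I would first assume $R_0<+\infty$ (otherwise this half is vacuous). By Theorem~\ref{th22AD}, $(b)\Rightarrow(a)$ applied with $R=R_0$, so for every $\mu>0$ there is $C_\mu>0$ with $|P(D)^nf(x)|\le C_\mu R_0^{\,n} e^{-\mu w_n(x)}$ for all $n,x$. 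Fixing $\lambda\ge0$ and choosing $\sigma>0$ large enough that $e^{-\sigma\omega(|y|^{1/m})}\in L^p(\R^d)$ (possible by condition $(\gamma)$ of Definition~\ref{defomega}, cf.\ \eqref{add2}), and taking $\mu=\lambda+\sigma$, the substitution $y=x/(n+1)$ gives $\|e^{\lambda w_n}P(D)^nf\|_{L^p}\le C_{\lambda+\sigma}R_0^{\,n}\|e^{-\sigma w_n}\|_{L^p}=C_{\lambda+\sigma}R_0^{\,n}(n+1)^{d/p}\|e^{-\sigma\omega(|y|^{1/m})}\|_{L^p}$, with the obvious modification when $p=\infty$. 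Raising to the power $1/n$ and letting $n\to+\infty$ kills the constant and the factor $(n+1)^{d/(pn)}$, yielding $\limsup_n\|e^{\lambda w_n}P(D)^nf\|_{L^p}^{1/n}\le R_0$.

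For the lower bound I would mimic the lower bound in the proof of Proposition~\ref{th3A1}, replacing $\xi_1$ by $P(\xi)$. Fix $\xi^0\in\supp\hat f$ and $0<\varepsilon<|P(\xi^0)|$. By continuity of $P$ I can pick $\psi\in\D_{(\omega)}(\R^d)$ supported in a ball around $\xi^0$ on which $|P(\xi)|\ge|P(\xi^0)|-\varepsilon=:\rho>0$ and with $\langle\hat f,\psi\rangle\neq0$ (such $\psi$ exists since $\xi^0\in\supp\hat f$ and $\D_{(\omega)}$ contains bump functions). On $\supp\psi$ the function $\overline{P}^{-n}$ is real analytic, so $\overline{P}^{-n}\psi$ is a compactly supported smooth function, indeed in $\D_{(\omega)}(\R^d)$ by Lemma~\ref{phistar}(viii). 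Writing $\Psi_n$ for (a fixed multiple of) its inverse Fourier transform and using Parseval together with $P(\xi)^n\hat f=\widehat{P(D)^nf}$, one obtains $\langle\hat f,\psi\rangle=\langle P(\xi)^n\hat f,\overline{P}^{-n}\psi\rangle=c\,\langle P(D)^nf,\Psi_n\rangle$ for a constant $c$ independent of $n$. H\"older's inequality $|\langle P(D)^nf,\Psi_n\rangle|\le\|e^{\lambda w_n}P(D)^nf\|_{L^p}\,\|e^{-\lambda w_n}\Psi_n\|_{L^{p'}}$ then gives, since $e^{-\lambda w_n}\le1$,
\[
\|e^{\lambda w_n}P(D)^nf\|_{L^p}\ \ge\ \frac{|c|^{-1}\,|\langle\hat f,\psi\rangle|}{\|\Psi_n\|_{L^{p'}}}.
\]

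It then remains to show $\|\Psi_n\|_{L^{p'}}^{1/n}\to\rho^{-1}$. The standard decay estimate $|x|^{2N}|\Psi_n(x)|\le\|\Delta_\xi^N(\overline{P}^{-n}\psi)\|_{L^1}$, combined with the negative-power analogue of Lemma~\ref{lemmapol} (iterating $D_j\overline{P}^{-n}=-n\,\overline{P}^{-n-1}D_j\overline{P}$ produces terms $\tfrac{(n+\ell-1)!}{(n-1)!}\,Q_{\ell,k}\,\overline{P}^{-n-\ell}$ with $\deg Q_{\ell,k}\le\ell(m-1)$) and the uniform bound $|\overline{P}|\ge\rho$ on the fixed compact set $\supp\psi$, bounds $\|\Psi_n\|_{L^{p'}}$ by $\rho^{-n}$ times a polynomial in $n$. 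Hence $\|\Psi_n\|_{L^{p'}}^{1/n}\to\rho^{-1}$, and since $|\langle\hat f,\psi\rangle|^{1/n}\to1$ we deduce $\liminf_n\|e^{\lambda w_n}P(D)^nf\|_{L^p}^{1/n}\ge\rho=|P(\xi^0)|-\varepsilon$. Letting $\varepsilon\downarrow0$ and taking the supremum over $\xi^0\in\supp\hat f$ gives $\liminf_n(\,\cdot\,)^{1/n}\ge R(P,\hat f)$.

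Finally I would combine the two halves: when $R_0<+\infty$ they sandwich the limit and produce \eqref{limit-with-polynomials}, while when $R_0=+\infty$ the lower bound alone already forces the limit to be $+\infty$, again matching $R(P,\hat f)$. I expect the main obstacle to be precisely the lower bound: one must verify that $\overline{P}^{-n}\psi$ stays in $\D_{(\omega)}$ and, above all, control $\Psi_n$ so that its $L^{p'}$-norm grows exactly like $\rho^{-n}$ up to polynomial-in-$n$ factors; this is where the negative-power version of Lemma~\ref{lemmapol} and the uniform lower bound $|P|\ge\rho$ on $\supp\psi$ are essential, everything else being the same bookkeeping with weights and H\"older's inequality already used in Proposition~\ref{th3A1}.
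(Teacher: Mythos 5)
Your proposal is correct, and the upper bound half is essentially the paper's argument: both apply Theorem~\ref{th22AD}, $(b)\Rightarrow(a)$, with an extra weight $e^{-\mu\omega(|\cdot|^{1/m})}\in L^p$ absorbed by H\"older and the substitution $y=x/(n+1)$ producing the harmless factor $(n+1)^{d/p}$. The lower bound is where you genuinely diverge. The paper disposes of it in one line: since $\omega\geq 0$ the weighted norm dominates the unweighted one, and $\liminf_n\|P(D)^nf\|_{L^p}^{1/n}\geq R(P,\hat f)$ is quoted directly from \cite[Proposition 2.4]{AD}. You instead reprove this inequality from scratch by transplanting the duality argument of Proposition~\ref{th3A1} with $\xi_1$ replaced by $P(\xi)$: a bump $\psi$ near $\xi^0\in\supp\hat f$ on which $|P|\geq|P(\xi^0)|-\varepsilon=\rho$, the identity $\langle\hat f,\psi\rangle=\langle P^n\hat f,\overline{P}^{-n}\psi\rangle$, and the estimate $\|\mathcal{F}^{-1}(\overline{P}^{-n}\psi)\|_{L^{p'}}\leq C(n+2d)^{2d}\rho^{-n}$ obtained from $(1-\Delta_\xi)^d$ integration by parts together with the negative-power analogue of Lemma~\ref{lemmapol}. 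This checks out (note that only $\limsup_n\|\Psi_n\|_{L^{p'}}^{1/n}\leq\rho^{-1}$ is needed, and that is exactly what your estimate delivers; the claimed two-sided limit is an overstatement you never use), and since $\hat f\in\Sch_\omega$ is an honest function the pairing with $\overline{P}^{-n}\psi$ needs no membership in $\D_{(\omega)}$, so that worry can be dropped. What your route buys is a self-contained proof that does not lean on the external reference and that makes visible why the mechanism of Proposition~\ref{th3A1} survives the replacement of a coordinate by a general polynomial; what it costs is the extra bookkeeping with $D^k\overline{P}^{-n}$ that the paper avoids entirely by citation.
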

\begin{proof}
On one hand, from \cite[Proposition 2.4]{AD}, it is obvious that
$$
\liminf_{n\to+\infty} \left\|e^{\lambda\omega\left(\left|\frac{x}{n+1}\right|^{1/m}\right)} P(D)^nf(x)\right\|_{L^p}^{1/n}\ge R(P,\hat{f}),
$$
for all $\lambda\ge 0.$ Hence, it is sufficient to prove that
$$
\limsup_{n\to+\infty} \left\|e^{\lambda\omega\left(\left|\frac{x}{n+1}\right|^{1/m}\right)} P(D)^nf(x)\right\|_{L^p}^{1/n}\le R(P,\hat{f}),
$$
for any $\lambda\ge 0$. To see this we fix $\lambda\ge 0$ and consider $\mu>0$ big enough such that
$$
\left\| e^{-\mu\omega\left(\left|x\right|^{1/m}\right)}\right\|_{L^{p}}<+\infty.
$$
Now, we assume that $R(P,\hat{f})<+\infty.$ By Theorem~\ref{th22AD}, for every $R\ge R(P,\hat{f})$ and every $n\in\N$, we have
\begin{eqnarray*}
\lefteqn{\left\|e^{\lambda\omega\left(\left|\frac{x}{n+1}\right|^{1/m}\right)} P(D)^nf(x)\right\|_{L^p}}\\
&&\le \left\| e^{-\mu\omega\left(\left|\frac{x}{n+1}\right|^{1/m}\right)}\right\|_{L^{p}} \left\|e^{(\lambda+\mu)\omega\left(\left|\frac{x}{n+1}\right|^{1/m}\right)} P(D)^nf(x)\right\|_{L^\infty}\\
&&\le (n+1)^{d/p} C_{\lambda+\mu} \left\| e^{-\mu\omega\left(\left|x\right|^{1/m}\right)}\right\|_{L^{p}} R^{n}.
\end{eqnarray*}
We deduce that $$\limsup_{n\to+\infty} \left\|e^{\lambda\omega\left(\left|\frac{x}{n+1}\right|^{1/m}\right)} P(D)^nf(x)\right\|_{L^p}^{1/n}\le R,$$ for each $R\ge R(P,\hat{f}),$ which concludes the proof.
\end{proof}

\begin{Rem}
\label{rem1}
\begin{em}
Let us remark that Theorem~\ref{th22AD} gives an estimate, in terms of $R$,
of the upper bound of $|P(\xi)|$ for $\xi\in\supp\hat{f}$. This is interesting because
$\{\xi\in\R^d:\ |P(\xi)|\leq R\}$ can be not compact, so that we have some estimate
on the support of $\hat{f}$ for $f\in\Sch_\omega(\R^d)$, with arbitrary support
of $\hat{f}$. Our results should be compared with \cite{JJ}. See also \cite{BJ1,BJ2,BJJ}.

\end{em}
\end{Rem}

\begin{Ex}
\label{rem2}
\begin{em}
Let $P\in\C[\xi_1,\ldots,\xi_d]$ be a polynomial of degree $m\geq1$. If $P$ is hypoelliptic,
then
\beqs
\label{15}
V_R:=\{\xi\in\R^d:\ |P(\xi)|\leq R\}
\eeqs
is compact.

Indeed, if $P$ is hypoelliptic then there exist $c>0$ and $0<\sigma\leq m$ such that
\beqsn
|P(\xi)|\geq c|\xi|^\sigma,
\qquad\forall\xi\in\R^d,\ |\xi|\gg1.
\eeqsn
Therefore there exists $M>0$ such that
\beqsn
V_R\subseteq\{\xi\in\R^d:\ |\xi|\leq M\}\cup
\{\xi\in\R^d:\ c|\xi|^\sigma\leq|P(\xi)|\leq R\},
\eeqsn
and therefore
is bounded and hence compact, since its trivially closed.

On the contrary, the fact that $V_R$ is compact does not imply that $P$ is hypoelliptic.
Take, for instance,
\beqsn
P(z)=z_1^2-z_2^2+iz_2,\qquad z_1,z_2\in\C.
\eeqsn
In this case
\beqsn
V_R=\{\xi\in\R^2:\ |\xi_1^2-\xi_2^2+i\xi_2|\leq R\}
\eeqsn
is compact since $|P(\xi)|\leq R$ implies
\beqsn
&&|\Im P(\xi)|=|\xi_2|\leq R\\
&&|\Re P(\xi)|=|\xi_1^2-\xi_2^2|\leq R\quad\Rightarrow\quad
|\xi_1|\leq\sqrt{\xi_2^2+R}\leq\sqrt{R^2+R}.
\eeqsn

However, $P(\xi)$ is not hypoelliptic since the following necessary and sufficient condition
for hypoellipticity (see \cite[Prop. 2.2.1]{R}) is not satisfied:
\beqsn
\lim_{\afrac{\zeta\in V}{|\zeta|\to+\infty}}|\Im\zeta|=+\infty,
\eeqsn
for
\beqsn
V:=&&\{z\in\C^2:\ P(z)=0\}\\
=&&\{z\in\C^2:\ z_2^2-iz_2-z_1^2=0\}\\
=&&\Big\{z\in\C^2:\ z_2=\frac{i\pm\sqrt{-1+4z_1^2}}{2}\Big\},
\eeqsn
where $\pm\sqrt{-1+4z_1^2}$ denote the two complex roots of $4z_1^2-1$.

Taking, for instance,
\beqsn
\xi=\left(\xi_1,\frac{i+\sqrt{4\xi_1^2-1}}{2}\right)\in V,\qquad\mbox{for}\ \xi_1\in\R,
\eeqsn
we have that $|\xi|\to+\infty$ for $|\xi_1|\to+\infty$, but
\beqsn
|\Im\xi|=\left|\left(0,\frac12\right)\right|=\frac12.
\eeqsn
\end{em}
\end{Ex}

\vspace*{10mm}
{\bf Acknowledgments.}
The authors were partially supported by the INdAM-Gnampa Project 2017
``Equazioni a Derivate Parziali, Analisi di Gabor ed Analisi Microlocale",
by the Projects FAR 2014 and FAR 2017 (University of  Ferrara),
by the Project FFABR 2017 (MIUR). The research of the second author was partially supported by the project MTM2016-76647-P.

\end{document}